\long\def\remove#1{}
\newtheorem{theorem}{Theorem}[section] 
\newtheorem{lemma}[theorem]{Lemma}
\newtheorem{obs}[theorem]{Observation}
\newtheorem{definition}[theorem]{Definition}
\newtheorem{proposition}[theorem]{Proposition}
\newenvironment{proof}{{\em Proof:}}{\hfill{\hfill\rule{2mm}{2mm}}}
\newcommand {\mm}[1] {\ifmmode{#1}\else{\mbox{\(#1\)}}\fi}
\newcommand{\A}                        {\mathrm {\mathbb{A}}}
\newcommand{\img}{\mathrm img}
\newcommand{\supp}{\mathrm supp}
\newcommand{\coker} {\mathrm coker}
\newcommand{\cancel}[1]
\begin{document}

\title{A refinement of Betti numbers  and homology in the  presence of a continuous function. I}

\author{
Dan Burghelea  \thanks{
Department of Mathematics,
The Ohio State University, Columbus, OH 43210,USA.
Email: {\tt burghele@math.ohio-state.edu}}
}
\date{}

\date{}
\maketitle
\begin{abstract}
\vskip .2in
We propose a refinement of the Betti numbers and of the homology with coefficients in a field of a compact ANR  $X,$ in the presence of a continuous real valued function on $X.$
The refinement of Betti numbers consists of  finite configurations of points  with multiplicities in the complex plane whose total cardinality are the Betti numbers and the refinement of homology consists of  configurations of vector spaces indexed by points in complex plane, with the same support as the first, whose direct sum is isomorphic to the homology. When the homology is  equipped with a scalar product these vector spaces are canonically realized as mutually orthogonal subspaces of the homology.

The assignments above are in analogy with the collections of eigenvalues and generalized eigenspaces of a linear  map in a finite dimensional complex vector space.

A number of remarkable properties of the above configurations are  discussed. 

\end{abstract}
\thispagestyle{empty}
\setcounter{page}{1}

%
\tableofcontents

\section {Introduction } \label {I}

The results of this paper and its subsequent part II, mostly  obtained  in collaboration with Stefan Haller, provide
a shorter version of some results  of paper \cite{BH}, still unpublished,  extend their generality based  on the involvement of the topology of Hilbert cube manifolds and refine them as configurations of complex numbers and of vector spaces. 

Precisely, for a fixed field $\kappa$ and $r\geq 0,$ one proposes a refinement of the Betti numbers $b_r(X)$ of a compact ANR  $X$ \footnote {see the definition of an ANR in subsection \ref{SS22}}  and a refinement of the homology $H_r(X)$ with coefficients in the field $\kappa$  
in the presence of a continuous function  $f: X\to \mathbb R.$ 

The refinements consists of  finite configurations of points  with multiplicity located in the plane $\mathbb R^2= \mathbb C,$  denoted by $\delta^f_r,$ 
equivalently of monic polynomials with complex coefficients $P^f_r(z),$  of degree  the Betti numbers $b_r(X),$   and  finite configurations of $\kappa-$vector spaces denoted by $\hat \delta^f_r$ with the same support and direct sum of all vector spaces 
isomorphic to $H_r(X),$ cf. Theorem \ref{T1}. 
The points of the configurations $\delta^f_r,$ equivalently  the zeros of the polynomials $P^f_r(z),$ are complex numbers $z= a+ib\in \mathbb C$ with both $a,b$  critical values \footnote {see section 2.2. below for the definition of regular and critical value}, cf. Theorem \ref{T1}.  The two configurations are related by $\dim \hat\delta^f_r= \delta^f_r.$

We show that :
\begin{enumerate}
\item
The assignment $f\rightsquigarrow P_r^f(z)$ is  continuous when $f$ varies in the space of continuous maps equipped with the compact open topology, cf. Theorem \ref{T2}. 
\item For an open and dense subset of continuous maps (defined on $X,$ an ANR satisfying some mild properties,) the  points of the configurations $\delta^f_r$ or the zeros of the polynomials $P^f_r(z)$ have multiplicity one, cf. Theorem \ref{T1}.
\item When $X$ is a closed topological $n-$manifold the Poincar\'e Duality between the Betti numbers $\beta_r$ and $\beta_{n-r}$ gets refined to a Poincar\'e Duality between configurations $\delta^f_r$and $\delta^f_{n-r}$ and the Poincar\'e Duality between $H_r(X)$ and $H_{n-r}(X)^\ast$  to a Poincar\'e Duality between configurations $\hat \delta^f_r$and $(\hat \delta^f_{n-r})^\ast $, cf .Theorem \ref{T3}.   
\item 
For  each  point of the configuration $\delta^f_r,$ equivalently  zero $z$ of the polynomial $P^f_r(z),$ 
the assigned vector space $ \hat\delta^f_r(z)$ has dimension the multiplicity of $z$ and  is a quotient of vector subspaces $ \hat\delta^f_r(z)= \mathbb F_r(z) /\mathbb F'_r(z),$  $\mathbb F'_r(z)\subset \mathbb F_r(z)\subset H_r(X).$ When  $\kappa=\mathbb R$ or $\mathbb C$ and $H_r(X)$ is equipped with a Hilbert space structure $ \hat\delta^f_r(z)$ identifies canonically to a subspace ${\bf H}_r(z)$ of $H_r(X)$ s.t. ${\bf H}_r(z)\perp {\bf H}_r(z')$ for $z\ne z'$ and $\oplus_z {\bf H}_r(z)= H_r(X),$ cf. Theorem \ref{T1}. This provides an additional structure (direct sum decomposition of $H_r(X)$ (which in view of Theorem \ref{T1}, for a generic $f,$  has all components of dimension 1).
\end{enumerate}

We refer to the system $(H_r(X), P_r^f(z),  \hat \delta^f_r)$ as the {\it $r-$homology spectral package of $(X,f)$}  in analogy with the spectral package of $(V,T),$ $V$ a vector space 
$T$ a linear endomorphism, which consists of the characteristic polynomial $P^T(z)$ with its roots $z_i,$   the eigenvalues of $T$ and with their corresponding generalized eigenspaces $V_{z_i}.$

In case $X$ is the underlying space of a closed oriented Riemannian manifold $(M^n, g)$  and $\kappa= \mathbb R$ or $\mathbb C$ the vector space $H_r(M^n),$ via the identification with the harmonic $r-$forms, has a structure of Hilbert space. The configuration $\hat\delta^f_r,$ for $f$ generic, provides a base in the space of harmonic forms.

All these results  are collected in the main theorems below, Theorems {\ref{T1}, \ref{T2}, \ref{T3}.  
 Theorems \ref{T1} (1) and (3), Theorem \ref{T2} and Theorem \ref{T3} were   established in \cite {BH}, not yet in print,  but under more restrictive hypothesis like "$X$ homeomorphic to a simplicial complex'' or "$f$ a tame map". 
In this paper we removed this hypothesis using results on Hilbert cube manifolds reviewed in subsection \ref {SS23} and complete them 
with additional results.

It is worth to note that the points of the configurations $\delta^f_r$  located above and on the diagonal in the plane $\mathbb R^2$ determine and are determined by the closed $r-$bar codes in the level persistence of $f$ while those below diagonal are determined and determine the open $(r-1)-$bar codes in the level persistence as observed in \cite {BH}. The algorithms proposed in \cite {CSD09} and in \cite {BD11} can be used for their calculation. 

Similar refinements hold for angle valued maps and will be discussed in Part II. In this case the homology has to be replaced by either the Novikov homology of $(X, \xi_f)$ which in our work is a f.g free module over the ring of Laurent polynomials $\kappa[ t^{-1},t]$ 
or, in case $\kappa$ is $\mathbb R$ or $\mathbb C,$ by the $L_2-$homology of the infinite cyclic cover defined by $\xi_f\in H^1(X:\mathbb Z),$ determined  by $f.$ In this case the $L_2-$homology is regarded as a Hilbert module over the von-Neumann algebra associated to the group $\mathbb Z.$  In this case ${\bf H}_r(z)$ are Hilbert submodules, and $\delta^f_r(x)$ is the von Neumann dimension of ${\bf H}_r(z).$ Note that the $L_2-$Betti numbers are actually the Novikov--Betti numbers of $(X,\xi_f)$ (which agree with the rank of the corresponding free module). 

The Author thanks  S. Ferry for help  in clarifying  a number of aspects about Hilbert cube manifolds and ANR's.
The Author is equally grateful to the referee for many suggestions, requests for clarifications and sometimes alternative arguments.

\section{Preliminary definitions}

\subsection {Configurations}\label {C}

Let $X$ be a topological space.

A {\it finite configuration of points} in $X$ is a map $$\delta:X\to \mathbb Z_{\geq 0}$$ with finite support.

A {\it finite configuration of vector spaces  indexed by points in $X$ } is a map  with finite support $$\overline \delta: X\to \rm{VECT}$$
(i.e. $\hat \delta(x)= 0 $ for all but finitely many $x\in X$),  where  
VECT denotes the collection of $\kappa-$vector spaces. 

For $N$ a positive integer number  denote by $\mathcal C_N(X)$ the set of configurations of points in $X$ with total cardinality $N,$
$$\mathcal C_N(X):=\{\delta: X\to \mathbb Z_{\geq 0} \mid \sum_{x\in X} \delta (x)= N\}.$$
 
For $V$ a finite dimensional $\kappa-$vector space denote by 
  $\mathcal P(V)$ be the set of  subspaces of $V$ 
and by ${\bf\mathcal C}_V(X)$ the set
$${\bf\mathcal C}_V(X):=\{ \overline\delta: X\to \mathcal P(V) \mid \begin{cases} \sharp \ \{x\in X \mid  \overline \delta (x)\ne 0\} <\infty\\
\overline\delta(x)\cap \sum_{y\ne x} \overline \delta(y)=0\\
\sum_{x\in X} \overline \delta(x)= V\end{cases}\}.$$

Here $\sharp$ denotes cardinality of the set in parentheses.

One  considers  the map 
$$e:\bf{\mathcal C}_V(X) \to \mathcal C_{\dim V}(X)$$  defined by $$e(\overline\delta) (x)= \dim \overline \delta(x)$$ and call  the configuration $e(\overline \delta)$ the {\it dimension} of $\overline \delta.$ 

Both sets  sets $\mathcal C_N(X)$ and ${\bf \mathcal C}_V (X)$   can be equipped with natural topology  ({\it collision topology}).
One  way to describe these topologies is to specify for each $\delta$ or $\hat\delta$  a system of {\it fundamental neighborhoods}.
If $\delta$ has as support  the set of points $\{x_1, x_2, \cdots,  x_k\},$ a fundamental neighborhood $\mathcal U$ of $\delta$ is specified by a collection of $k$ disjoint open neighborhoods  $U_1, U_2,\cdots, U_k$ of $x_1,\cdots,  x_k$ and consists of $\{\delta'\in \mathcal C_N(X)\mid
\sum_{x\in U_i} \delta'(x)=\delta(x_i)\}.$ 
Similarly if $\overline \delta$ has as support  the set of points $\{x_1, x_2, \cdots,  x_k\}$ with $\overline \delta(x_i)= V_i\subseteq V\},$ a fundamental neighborhood $\mathcal U$  of $\overline \delta$ is specified by a collection of $k$ disjoint open neighborhoods  $U_1, U_2,\cdots, U_k$ of $x_1,\cdots,  x_k,$ and consists of 
$$\{\overline \delta'\in \mathcal C_V(X) \mid  x\in U_i\Rightarrow \overline \delta'(x)\subset V_i,  
\bigoplus_ {x\in U_i} \overline \delta' (x) =V_i\}.$$ Clearly $e$ is continuous. 
 
 When $\kappa$ is an infinite field the topology of $\mathcal C_V(X)$ has too many connected components to be useful unless the geometry forces 
 the possible values of the configurations to be at most countable.
 
When $\kappa= \mathbb R$ or $\mathbb C$ and $V$ is a   Hilbert space it is natural to consider the subset of $\mathcal C^O_V(X)\subset \mathcal C_V(X)$ consisting of configurations whose vector spaces  $\overline \delta (x)$ are mutually orthogonal. In this case  for $\overline \delta$  with support the set of points $\{x_1, x_2, \cdots,  x_k\}$ and  $\overline\delta(x_i)= V_i\subseteq V,$ one can consider a fundamental neighborhood $\mathcal U$  of $\overline \delta$ is specified by a collection of $k$ disjoint open neighborhoods  $U_1, U_2,\cdots U_k$ of $x_1,\cdots,  x_k,$ and open neighborhoods $O_1, O_2, \cdots, O_k$ of $V_i$ in $G_{\dim V_i} (V)$ and  consists of 
$$\{\overline \delta'\in \mathcal C^O_V(X) \mid  \bigoplus_ {x\in U_i} \hat \delta' (x) \in O_i\}.$$
Here $G_k(V)$ denotes the Grassmanian of $k-$dimensional subspaces  of $V.$

With respect to this topology  $e$ is continuous, surjective and proper, with fiber above $\delta,$ the subset of  $G_{n_1}(V)\times G_{n_2}(V) \cdots \times G_{n_k}(V)$ consisting  of $(V'_1, V'_2,\cdots, V'_k), V'_i\in G_{n_i}(V)$ mutually orthogonal,   where $n_i= \dim V_i.$  This set is compact and is actually an algebraic variety. 

Note that: 
\begin{enumerate}
\item $\mathcal C_N(X)= X^N/\Sigma_N$ is the so called $N-$symmetric  product and if $X$ is  a metric space with distance $D$ 
then the collision topology is  the topology defined by the distance $\underline D$ on  $X^N/\Sigma_N$ 
induced from the distance on $X^N$ given by 
$D(x_1,x_2, \cdots, x_N; y_1, y_2, \cdots y_N):= sup _{i=1,\cdots, N} \{D(x_i, y_i)\}.$
\item If $X= \mathbb R^2= \mathbb C$ then $\mathcal C_N(X)$ identifies to the set of monic polynomials with complex coefficients. To the configuration $\delta$ whose support consists of the points $z_1, z_2, \cdots z_k$ with  $\delta(z_i)= n_i$ one associates the monic polynomial  $P^f(z)= \prod _i (z- z_i)^{n_i}. $ Then $\mathcal C_N(X)$ identifies to $\mathbb C^N$ as metric spaces. 
 \item  The space ${\bf \mathcal C}_V (X)$ and then ${\bf \mathcal C}_V (\mathbb R^2)$ can be  equipped with a complete metric which induces the collision topology but this will not be used here. 
\end{enumerate}

\subsection {Tame maps}\label {SS22}

Recall that a metrizable space $X$ is an ANR if any closed subset $A$ of a metrizable space $B$ with $A$ homeomorphic to $X$ has a neighborhood $U$ which retracts to $A$, cf \cite {Hu} chapter 3. 
Recall also that any space homeomorphic to a locally finite simplicial complex  or a finite dimensional topological manifold or an infinite dimensional manifold (i.e. a paracompact Hausdorff space locally homeomorphic to the Hilbert space $l_2$ or the Hilbert cube $I^\infty$) is an ANR, cf \cite {Hu}.

{\bf All maps $f:X\to \mathbb R$ in this paper are continuous proper maps defined on $X$ an ANR,}  hence if such maps exists $X$ is locally compact.
  From now on the words ''proper continuous'' should always be assumed to precede the word "map" even if not specified.  

The following concepts are consistent with the familiar terminology in topology.

\begin{enumerate}
\item A map $f:X\to \mathbb R$ is {\it weakly tame}  if for any $t\in \mathbb R,$  the level $f^{-1}(t)$ is an ANR.
 Therefore for any bounded or unbounded closed interval $I=[a,b], \ a,b \in \mathbb R \sqcup \{\infty, -\infty\}$  $f^{-1}(I)$ is an ANR. 
Indeed if $I=[a,b],$ in view of the hypothesis that  $f^{-1}(a)$ and  $f^{-1}(b)$ are ANRs  and  of the definition of ANR,  there exists an open set $U\subset X\setminus f^{-1}(a,b)$ which retracts to $f^{-1}(a)\sqcup f^{-1}(b).$  Then $U\cup f^{-1}[a,b]$ is an open set in $X$ which retracts to $f^{-1}(I).$ Since $X$ is an ANR this suffices to conclude that  $f^{-1}(I)$is an ANR cf \cite {Hu}.   A similar argument can be used for $I=(-\infty, a]$ or $I= [b, \infty).$
 
\item The number $t\in \mathbb R$ is a {\it regular value} if there exists $\epsilon >0$ s.t. for any $t'\in (t-\epsilon, t+\epsilon)$ the open set $f^{-1}(t-\epsilon, t+\epsilon)$ retracts by deformation to 
$f^{-1}(t').$
A number $t$ which is not regular value is a {\it critical value}. In view of hypothesis on $f$ a "map" (hence $X$ locally compact and $f$ proper)  the requirement on $t$ in the definition of {\it weakly tame}  is satisfied for any $t$ regular value. 
Informally, the critical values are the values $t$ for which the topology of the level (= homotopy type) changes.  One denotes by $Cr(f)$ the collection of critical values of $f.$
 
\item The map $f$ is called {\it tame} if weakly tame and in addition: 

(a) The set of  critical values $Cr(f)\subset \mathbb R$ is discrete,  

(b) $\epsilon (f):= \inf \{|c-c'| \mid c,c'\in Cr(f), c\ne c'\}$ satisfies $\epsilon(f)>0.$  

If $X$is compact then (a)  implies (b).

\item An ANR which has the 
tame maps dense in the set of all maps w.r. to the fine $C_0-$ topology is called a {\it good ANR}.

There exist  compact ANR's (actually compact homological n-manifolds, cf \cite {DW}) with no co-dimension one subsets which are ANR's, hence compact ANR's which are not {\it good }. 

\end{enumerate}

The reader should be aware of the following rather obvious facts.
\begin{obs}\label {O21}\
\begin{enumerate}
\item If $f$ is weakly tame  map then $f^{-1}([a,b])$ is a compact ANR and has the homotopy type of a finite simplicial complex (cf \cite{Mi}) and therefore finite dimensional homology w.r. to any field $\kappa.$
\item If $X$ is a locally finite simplicial complex and $f$ is a simplicial map then $f$ is weakly tame  with the set of critical values discrete. Critical values are among the values of $f$ on vertices. If in addition $X$ is compact then $f$ is tame.
\item If $X$ is homeomorphic to a finite simplicial complex then the set of tame maps 
is dense in the set of all continuous maps with the $C_0-$ topology (= compact open topology). The same remains true if $X$ is a compact Hilbert cube manifold defined in the next section. In particular all these spaces are good ANR's.
\end{enumerate}
\end{obs}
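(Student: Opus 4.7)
The plan is to handle the three parts of the Observation in sequence, exploiting properness of $f$, the definition of weakly tame, and standard ANR results.

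For (1), compactness of $f^{-1}([a,b])$ is immediate from properness applied to the compact set $[a,b]\subset\mathbb R$. For the ANR property I would invoke Hanner's local characterization: a compact metrizable space is ANR iff it is locally ANR. At an interior point $p$ with $f(p)\in(a,b)$ this is inherited from $X$ itself being locally ANR (take a neighborhood $U$ of $p$ in $X$ with $f(U)\subset(a,b)$, so that $U\cap f^{-1}([a,b])=U$). At boundary points, where $f(p)=a$ or $f(p)=b$, one combines the weakly tame hypothesis---that the level $f^{-1}(a)$ is ANR---with a local mapping-cylinder description coming from a regular-value interval $(a,a+\epsilon]$ just above $a$, where the preimage deformation retracts onto the level $f^{-1}(a)$. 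Once the ANR property is established, West's theorem (as cited in \cite{Mi}) yields the homotopy equivalence with a finite simplicial complex, from which finite dimensional homology over any field follows.

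For (2), I would use that a simplicial map restricted to each closed simplex $\sigma$ is affine, so for every $t\in\mathbb R$ the intersection $f^{-1}(t)\cap\sigma$ is either empty, all of $\sigma$, or a convex polytope of codimension one. The union $f^{-1}(t)$ is therefore a polyhedron, namely a subcomplex of a sufficiently fine subdivision of $X$, and hence an ANR. Moreover, if $t$ is not the value of $f$ at any vertex, then for $\epsilon$ small enough one obtains a simplex-by-simplex affine deformation retraction of $f^{-1}(t-\epsilon,t+\epsilon)$ onto $f^{-1}(t)$, showing that $t$ is a regular value. Hence $Cr(f)$ is contained in the set of vertex values, which is locally finite in $\mathbb R$ by local finiteness of $X$ and therefore discrete; compactness makes it finite, forcing $\epsilon(f)>0$.

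For (3), in the compact simplicial case the simplicial approximation theorem, applied after iterated barycentric subdivision and a sufficiently fine partition of a compact interval containing $f(X)$, produces a simplicial map $\tilde f$ arbitrarily $C^0$-close to $f$; by (2) it is tame. For a compact topological manifold $M$, and likewise for a compact Hilbert cube manifold, I would appeal to the Hilbert cube machinery reviewed in \S\ref{SS23}, in particular Chapman's triangulation theorem giving a homeomorphism $M\times I^\infty\cong K\times I^\infty$ for some compact polyhedron $K$; a tame map on $K$ pulls back via the projection to a tame map on $K\times I^\infty$ (levels are Hilbert cube bundles over ANR levels of $K$, hence ANR), and one transfers back across the homeomorphism to $M$. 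The main obstacle is the ANR part of (1) at boundary points of $f^{-1}([a,b])$, where one must upgrade the ANR structure of the level $f^{-1}(a)$ to an ANR neighborhood in the sub-level set without the benefit of discreteness of critical values; a subsidiary difficulty in (3) for topological manifolds is checking that the map transferred across the Hilbert cube homeomorphism still has ANR levels and discrete critical values on $M$.
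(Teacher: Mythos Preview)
The paper gives no proof of this Observation; it is introduced as a list of ``rather obvious facts,'' and the ANR assertion in (1) is already made---without justification---one paragraph earlier, inside the definition of weakly tame (``Therefore \ldots\ $f^{-1}(I)$ is an ANR''). So from the paper's viewpoint (1) amounts only to: properness gives compactness, and a compact ANR has finite homotopy type by the result cited as \cite{Mi} (West).

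Your attempt to actually justify that ``Therefore'' is well motivated, but the argument does not work as written: you invoke a regular-value interval $(a,a+\epsilon]$ adjacent to $a$, while weak tameness supplies no regular values whatsoever near $a$---every value could be critical. The deformation-retraction picture you want is simply unavailable in this generality. Getting $f^{-1}([a,b])$ to be an ANR from the hypothesis that the individual levels are ANRs requires a genuine ANR gluing/complementation theorem (for which \cite{Hu} is the standard reference), not a local homotopy argument; you have correctly located the difficulty but not resolved it.

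Your treatment of (2) and of the simplicial and Hilbert-cube-manifold cases in (3) is correct; the latter is essentially what the paper carries out in Proposition~\ref{PA}. The gap you flag for compact \emph{topological} manifolds is real and is not closed by the Hilbert cube machinery: Chapman's theorem gives a homeomorphism $M\times Q\cong K\times Q$, so any tame map you construct this way lives on $M\times Q$, and restriction to a slice $M\times\{\ast\}$ has no reason to be tame on $M$. For topological manifolds the honest input is different and deeper---handle decompositions (smoothing in dimensions $\leq 3$, Kirby--Siebenmann in dimensions $\geq 5$) yield topological Morse-type functions with ANR levels and finitely many critical values. The paper does not spell this out either.
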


For the needs of this paper  weaker than usual concepts of regular or critical values and tameness, relative to homology with coefficients in the field $\kappa$ suffice.  They are introduced  in section 3.

\subsection {Compact Hilbert cube manifolds} \label{SS23}\

Recall that: 

-  The Hilbert cube $Q$ is the infinite product $Q=I^\infty=  \prod_{i\in \mathbb Z_{\geq 0}} I_i$ with $I_i= [0,1].$ 
The  topology of $Q$ is  given by the distance $d(\overline u ,\overline v)= \sum _i |u_i- v_i|/ 2^i$ with $\overline u= \{u_i\in I, i\in \mathbb Z_{\geq 0}\}$  and  $\overline v= \{v_i\in I, i\in \mathbb Z_{\geq 0}\}$

-  The space $Q$ is a compact ANR and so is any $X\times Q$ for any  $X$  compact ANR.

-  A compact Hilbert cube manifold is a compact Hausdorff space locally homeomorphic to the Hilbert cube $Q.$
\vskip .1in

For $f:X\to \mathbb R$ and $F:X\times Q\to \mathbb R$ denote by $\overline f_Q :X\times Q\to \mathbb R$  and  $F_k:X\times Q\to \mathbb R$
the maps defined by
$$\overline f _Q(x,\overline u)= f(x)$$
and 
$$F_k(x,\overline u)= F(x, u_1, u_2, \cdots u_k, 0,0,\cdots).$$

In view of the definition of $\overline f_Q$ and of the metric on $Q$ observe that : 

\begin{obs}\label {OA}\label{O22}\
\begin {enumerate}
\item If $f:X\to \mathbb R$ is a tame map so is $\overline f_Q.$  
\item  If $X$ is compact then the sequence of maps $F_n$ is  uniformly convergent to  the map $F$ when $n\to \infty.$ 
\end{enumerate}
\end{obs}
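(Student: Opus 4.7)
The plan is to treat the two parts of the observation separately, since they rely on quite different ingredients.

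For part (1), I would verify each of the three conditions in the definition of tameness in turn. First, weak tameness of $\overline f$: note that $\overline f^{-1}(t) = f^{-1}(t)\times Q$, and the product of two ANRs is an ANR (this is standard and quoted in subsection \ref{SS23}, since $Q$ itself is an ANR). Next, I would show that the critical and regular values of $\overline f$ coincide with those of $f$. Indeed, for any interval $(t-\eps,t+\eps)$ one has
\[
\overline f^{-1}(t-\eps,t+\eps) = f^{-1}(t-\eps,t+\eps)\times Q, \qquad \overline f^{-1}(t')= f^{-1}(t')\times Q,
\]
and taking product with the identity of the contractible (indeed convex) space $Q$ preserves, and reflects (by projection on the first factor), the property that an inclusion is a homotopy equivalence. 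Hence $Cr(\overline f) = Cr(f)$, so it is discrete with $\eps(\overline f)=\eps(f)>0$. Together these establish tameness of $\overline f$.

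For part (2), the key point is that $X\times Q$ is compact (since $X$ is a compact ANR and $Q$ is compact), so the continuous function $F:X\times Q\to\mathbb R$ is uniformly continuous with respect to the product metric inherited from the metric $d$ on $Q$ recalled in subsection \ref{SS23}. I would then estimate the distance between $(x,\overline u)$ and $(x,u_1,\dots,u_n,0,0,\dots)$ in $X\times Q$: it is exactly $\sum_{i>n}|u_i|/2^i\le\sum_{i>n}1/2^i = 1/2^n$. Given $\eta>0$, uniform continuity supplies $\delta>0$ for which any two points within $\delta$ have $F$-values within $\eta$; choosing $n$ large enough that $1/2^n<\delta$ yields $|F_n(x,\overline u)-F(x,\overline u)|<\eta$ simultaneously for all $(x,\overline u)\in X\times Q$, which is the desired uniform convergence.

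Neither part is genuinely delicate. The only mild subtlety is in part (1), where one must be slightly careful that taking the product with $Q$ preserves the homotopy equivalence of inclusions used to define regular values; but this is immediate from the fact that $\mathrm{id}_Q$ is a homotopy equivalence, combined with the functoriality of $\times Q$ on pairs of spaces. In part (2) the only ingredient beyond the definitions is the compactness of $X\times Q$, which is why the hypothesis that $X$ be a compact Hilbert cube manifold (or more generally a compact ANR) is needed. Thus the main obstacle is essentially bookkeeping rather than any substantive difficulty.
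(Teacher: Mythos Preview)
Your proof is correct and spells out precisely what the paper leaves implicit: the paper offers no argument beyond the remark that the observation follows ``in view of the definition of $\overline f$ and of the metric on $Q$,'' and your verification of tameness via $\overline f^{-1}(t)=f^{-1}(t)\times Q$ together with $Cr(\overline f)=Cr(f)$, and of uniform convergence via compactness plus the tail estimate $\sum_{i>n}2^{-i}\to 0$, is exactly the intended unpacking. There is nothing to add.
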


The following are basic  results about compact Hilbert cube manifolds whose proof can be found in \cite {CH}.

\begin{theorem} \label {T23}\ 
\begin{enumerate} 
\item (R Edwards) If $X$ is a compact ANR then $X\times Q$ is a compact Hilbert cube manifold.
\item (T.Chapman) Any  compact Hilbert cube manifolds is homeomorphic to $K\times Q$ \ for some finite simplicial complex $K.$ 
\item (T Chapman) If $\omega:X\to Y$ is a homotopy equivalence between two finite simplicial complexes with Whitehead torsion $\tau(\omega)=0$ then  the there exists a homeomorphism $\omega': X\times Q\to Y\times Q$ s.t. 
$\omega'$ and $\omega\times id_Q$ are homotopic.  
As a consequence of Observation \ref{O24} below, two compact Hilbert cube manifolds which are  homotopy equivalent  become homeomorphic after product with $\mathbb S^1.$
\end{enumerate}
\end{theorem}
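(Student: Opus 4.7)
The plan is to defer to the classical theory of Hilbert cube manifolds and to outline the main constructions of Edwards and Chapman as presented in \cite{CH}. All three parts rely on a common toolkit: the notion of a $Z$-set in a $Q$-manifold (a closed subset $A$ such that the identity on the ambient manifold can be uniformly approximated by maps into its complement), Anderson's $Z$-set unknotting theorem, and the absorbing identity $Q \cong Q \times Q$, which allows any local piece to be stabilized by multiplication with another copy of $Q$ without altering its homeomorphism type.

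For part (1) I would follow Edwards' original argument. A compact ANR $X$ embeds as a $Z$-set in $Q$ by the Anderson--West embedding theorem, and the ANR property supplies a retraction $r : U \to X$ from an open neighborhood $U$ of $X$ in $Q$. Using $Z$-set unknotting to shuffle the graph of $r$ into a standard position, and exploiting the absorption $U \cong U \times Q$, one produces around every point of $X \times Q$ a chart homeomorphic to an open subset of $Q$. The central technical content is that an ANR retract of an open subset of $Q$ becomes a $Q$-manifold after multiplication by $Q$.

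Part (2), Chapman's triangulation theorem, is the deepest of the three and proceeds by a handle decomposition adapted to the $Q$-manifold category. One covers the compact manifold $M$ by finitely many charts each homeomorphic to an open subset of $Q$ and then inductively absorbs them into a growing ``$Q$-handlebody'' of the form $K \times Q$ with $K$ a finite simplicial complex. At each stage the local obstruction to straightening a handle attachment is a Whitehead-type torsion that vanishes after crossing with $Q$, which is the infinite-dimensional counterpart of the triviality of stable $h$-cobordisms. The main obstacle is the bookkeeping required to keep the attaching maps under control through the induction, and this is where the bulk of the work in \cite{CH} is invested.

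For part (3), the ``only if'' direction is immediate since a homeomorphism is automatically a homotopy equivalence. For the converse, given $\omega: X \to Y$ with $\tau(\omega)=0$, one realizes $\omega$ up to homotopy as the inclusion of one end of a simplicial $h$-cobordism $W$ between $X$ and $Y$ whose Whitehead torsion equals $\tau(\omega)$ and therefore vanishes. Crossing with $Q$ yields an $h$-cobordism $W \times Q$ between $X \times Q$ and $Y \times Q$, and Chapman's $s$-cobordism theorem for $Q$-manifolds gives $W \times Q \cong (X \times Q) \times [0,1]$; projection onto the opposite end furnishes the required homeomorphism $\omega'$, and a standard tracking argument through the trivial product shows $\omega' \simeq \omega \times id_Q$. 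The final assertion for arbitrary compact $Q$-manifolds then follows by combining this with part (2) to reduce to the simplicial case. The hardest single step is the $s$-cobordism theorem for $Q$-manifolds itself, whose proof uses infinite-dimensional torus-trick techniques.
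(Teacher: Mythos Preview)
The paper does not prove Theorem~\ref{T23} at all: it is quoted as a collection of ``basic results about compact Hilbert cube manifolds whose proof can be found in \cite{CH}'' and is used as a black box thereafter. Your write-up is therefore not a reconstruction of the paper's argument but an (unrequested) outline of the Edwards--Chapman theory itself; in that sense the ``paper's own proof'' is simply the citation, and your proposal goes well beyond it.

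As an outline of the classical proofs your sketch is broadly faithful, with one caveat in part~(3). Realizing $\omega$ as the inclusion of one end of a simplicial $h$-cobordism presupposes that $X$ and $Y$ are manifolds, which they are not in general; finite simplicial complexes do not bound $h$-cobordisms in the geometric sense. Chapman's actual route (see \cite{CH}) goes through simple homotopy theory and mapping cylinders: when $\tau(\omega)=0$ the map $\omega$ is a simple homotopy equivalence, and one shows directly that the mapping cylinder $M(\omega)\times Q$ collapses to $X\times Q$ and to $Y\times Q$ via $Z$-set unknotting and the near-homeomorphism theorem for CE maps between $Q$-manifolds. Your invocation of the $Q$-manifold $s$-cobordism theorem is not wrong in spirit, but the object you would feed into it is $M(\omega)\times Q$, not an $h$-cobordism between the complexes themselves.
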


\begin{obs} \label {O24}
 (folklore)  If $\omega$ is a homotopy equivalence between two finite simplicial  complexes then $\omega\times id_{\mathbb S^1}$ 
has the Whitehead torsion $\tau(\omega\times id_{\mathbb S^1}) =0.$
\end{obs}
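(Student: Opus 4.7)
The plan is to derive the observation from the classical product formula for Whitehead torsion, due to Kwun--Szczarba and stated for instance in Milnor's survey on Whitehead torsion. That formula says that for any two homotopy equivalences $\omega_1 : X_1 \to Y_1$ and $\omega_2 : X_2 \to Y_2$ between finite CW complexes,
\[
\tau(\omega_1 \times \omega_2) \;=\; \chi(Y_2)\cdot i_\ast \tau(\omega_1) \;+\; \chi(Y_1)\cdot j_\ast \tau(\omega_2),
\]
where $i_\ast$ and $j_\ast$ are the maps on Whitehead groups induced by the two inclusions $\pi_1(Y_1) \hookrightarrow \pi_1(Y_1\times Y_2) \hookleftarrow \pi_1(Y_2)$, and the identity holds in $\mathrm{Wh}(\pi_1(Y_1)\times \pi_1(Y_2))$.

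Applied with $\omega_1 = \omega$ and $\omega_2 = \mathrm{id}_{\mathbb{S}^1}$, the second summand vanishes because $\tau(\mathrm{id}) = 0$, and the first vanishes because $\chi(\mathbb{S}^1) = 0$. This gives $\tau(\omega\times \mathrm{id}_{\mathbb{S}^1}) = 0$ immediately, which is exactly the claim.

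The only real obstacle is the invocation of the product formula itself, which is nontrivial but classical. If one prefers a self-contained derivation rather than quoting the reference, one can argue as follows. Put on $\mathbb{S}^1$ the CW structure with a single $0$-cell and a single $1$-cell, so that $C_\ast(\widetilde{\mathbb{S}^1}) = \mathbb{Z}[\mathbb{Z}] \xrightarrow{\,t-1\,} \mathbb{Z}[\mathbb{Z}]$. Represent $\omega$ by a based chain homotopy equivalence $\varphi : C_\ast(\widetilde X)\to C_\ast(\widetilde Y)$ of $\mathbb{Z}[\pi_1(Y)]$-modules with $\tau(\omega)=[\varphi]$. The product of CW structures yields a chain map whose algebraic mapping cone is isomorphic to the cone of $\varphi$ in two consecutive degrees shifted by the $1$-cell, so its torsion class reads $i_\ast\tau(\varphi) + (-1)\, i_\ast\tau(\varphi) = 0$ in $\mathrm{Wh}(\pi_1(Y)\times \mathbb{Z})$. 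The cancellation is precisely the algebraic shadow of $\chi(\mathbb{S}^1)=0$. Either route yields the result.
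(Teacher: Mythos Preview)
Your argument is correct: the Kwun--Szczarba product formula applied with $\omega_2 = \mathrm{id}_{\mathbb S^1}$ kills both summands, one via $\tau(\mathrm{id})=0$ and the other via $\chi(\mathbb S^1)=0$, yielding the claim immediately. The paper itself offers no proof---the observation is simply recorded as ``folklore''---so there is nothing to compare against; your invocation of the product formula is exactly the standard justification one has in mind for this statement.
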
 

As a consequence of the above statements we have the following  proposition.
\begin{proposition}\label {PA}\
Any  compact Hilbert cube manifold $M$ is a good ANR. 
\end{proposition}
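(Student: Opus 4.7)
The plan is to reduce the problem to the already-known case of finite simplicial complexes via Chapman's triangulation theorem, and then approximate a general continuous map by one that depends on only finitely many coordinates of the Hilbert cube factor.

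First I would invoke Theorem \ref{T23}(2) to fix a homeomorphism $M\cong K\times Q$ for some finite simplicial complex $K$. Given any continuous $F:M\to \mathbb R$ and any $\eps>0$, the task is to produce a tame map $\tilde F:M\to\mathbb R$ with $\sup_{x\in M}|F(x)-\tilde F(x)|<\eps$; since $M$ is compact, such uniform approximation is exactly density in the compact-open topology, and any continuous map on a compact space is automatically proper. Next I would use Observation \ref{O22}(2): the maps $F_n:K\times Q\to\mathbb R$ obtained by zeroing out all Hilbert cube coordinates past the $n$-th converge uniformly to $F$, so I may fix $n$ large enough that $\|F-F_n\|_\infty<\eps/2$.

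The key observation is that $F_n$ factors through the projection $p_n:K\times Q\to K\times I^n$ that forgets coordinates beyond the $n$-th; call the resulting map $G_n:K\times I^n\to\mathbb R$, so $F_n=G_n\circ p_n$. Now $K\times I^n$ is a compact space homeomorphic to a finite simplicial complex, so by Observation \ref{O21}(3) tame maps are dense in the compact-open topology on $C(K\times I^n,\mathbb R)$. Choose a tame map $\tilde G_n:K\times I^n\to\mathbb R$ with $\|G_n-\tilde G_n\|_\infty<\eps/2$ and set $\tilde F:=\tilde G_n\circ p_n$.

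It remains to verify that $\tilde F$ is tame on $M$. Writing $Q=I^n\times Q'$ with $Q'$ still homeomorphic to a Hilbert cube, one has $M\cong (K\times I^n)\times Q'$ and under this identification $\tilde F$ is precisely the map $\overline{\tilde G_n}$ of Observation \ref{O22}(1) applied to $\tilde G_n$; hence $\tilde F$ is tame. Combined with the two uniform estimates this gives $\|F-\tilde F\|_\infty<\eps$, establishing density. The only real subtlety I anticipate is bookkeeping with the identification $Q\cong I^n\times Q'$ so that the pulled-back tame map really fits the framework of Observation \ref{O22}(1); the rest is a straightforward two-step approximation.
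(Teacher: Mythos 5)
Your argument is correct and is essentially the paper's own proof: both reduce to $M\cong K\times Q$ via Chapman's theorem, approximate $F$ uniformly by a map $F_n$ depending on only finitely many cube coordinates, approximate the resulting map on the finite complex $K\times I^n$ by a tame (p.l.) map, and pull back using Observation \ref{O22}(1) under the identification $Q\cong I^n\times Q'$. The paper phrases this via its notion of ``special p.l.\ maps,'' but the two-step approximation and the final tameness check are the same.
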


\begin{proof} 
\ A map $f: M\to \mathbb R,$ $M$ a compact Hilbert cube manifold, is called {\it special} if there exist a finite simplicial complex $K,$  a map $g:K\to \mathbb R$ and a homeomorphism $\theta: M\to K\times Q$ s.t. $\overline g\cdot \theta= f$ and a special map is p.l. \footnote {p.l.= piecewise linear}if in addition $g$ is p.l. map.
By Observation \ref{O22} any map $f:M\to \mathbb R$ is $\epsilon/2$ closed to a special map. Since any continuous real valued map defined on a simplicial complex $K$ is $\epsilon/2$ close to a p.l. map then any special map on $M$ is $\epsilon/2$ closed to a special p.l.  map. 
Consequently $f$ is $\epsilon$ closed to a special p.l. map which is tame  in view of Observations \ref{O21} and \ref{O22}. This implies that the set of tame maps is dense in the set of all continuous maps. 
\end{proof}

\section {The configurations $\delta^f_r$ and $\hat \delta ^f_r$}\label {P}

In this paper we fix a field $\kappa,$ and for a space $X$ denote by $H_r(X)$ the homology of $X$ with coefficients in the field $\kappa.$  
Let $f:X\to \mathbb R$ be a  map. As in the previous section $f$ is proper continuous and  $X$ is  a locally compact  ANR. One
 denotes by :
\begin{enumerate}
\item  $X_a$ the sub level 
$X_a: =f^{-1}(-\infty,a]),$ 
\item $X^b$ the super level 
$X^b:=f^{-1}([b,\infty)),$
\item $\mathbb I^f_a(r):= \img (H_r(X_a)\to H_r(X)) \subseteq H_r(X),$
\item $\mathbb I^b_f(r):= \img (H_r(X^b)\to H_r(X))\subseteq H_r(X),$
\item $\mathbb F_r^f(a,b)= \mathbb I^f_a(r)\cap \mathbb I^b_f(r)\subseteq H_r(X).$
\end{enumerate}

Clearly one has the following observation.
\begin{obs}\label {O31}\
\begin{enumerate}
\item For $a'\leq a$ and $b\leq b'$ one has $\mathbb F_r^f(a',b')\subseteq\mathbb F_r^f(a,b),$
\item For $a'\leq a$ and  $b\leq b'$ one has $\mathbb F_r^f(a',b)\cap \mathbb F_r^f(a,b')= \mathbb F_r^f(a',b').$
\item $\sup_{x\in X} |f(x)-g(x)|
<\epsilon$ implies $\mathbb F^g(a-\epsilon, b+\epsilon) \subseteq\mathbb F^f_r(a,b).$
\end{enumerate}
\end{obs}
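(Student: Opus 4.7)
The plan is to derive all three items from a single fundamental principle: functoriality of singular homology combined with the elementary fact that inclusions of spaces are compatible with intersections of their images in a common target. The key sublevel/superlevel monotonicity to be used throughout is that $a' \leq a$ implies $X_{a'} \subseteq X_a$ and $b \leq b'$ implies $X^{b'} \subseteq X^b$, with both inclusions factoring through $X$ in the expected way.

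For item (1), I would first note that the factorization $X_{a'} \hookrightarrow X_a \hookrightarrow X$ induces a commutative triangle in homology, forcing $\mathbb I^f_{a'}(r) \subseteq \mathbb I^f_a(r)$; the symmetric argument on superlevels gives $\mathbb I^{b'}_f(r) \subseteq \mathbb I^b_f(r)$. Intersecting the two inclusions yields the desired containment $\mathbb F^f_r(a',b') \subseteq \mathbb F^f_r(a,b)$. Item (2) is then a purely set-theoretic consequence of (1): expanding both sides into four-fold intersections of the subspaces $\mathbb I^f_{a'}, \mathbb I^f_a, \mathbb I^b_f, \mathbb I^{b'}_f$ of $H_r(X)$, and using that the inclusions $\mathbb I^f_{a'} \subseteq \mathbb I^f_a$ and $\mathbb I^{b'}_f \subseteq \mathbb I^b_f$ make the larger terms redundant, the intersection collapses to $\mathbb I^f_{a'}(r) \cap \mathbb I^{b'}_f(r) = \mathbb F^f_r(a',b')$.

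For item (3), the essential observation is that the uniform bound $\sup_x |f(x) - g(x)| < \epsilon$ gives sublevel/superlevel containments: if $g(x) \leq a-\epsilon$ then $f(x) \leq g(x)+\epsilon \leq a$, so $X^g_{a-\epsilon} \subseteq X^f_a$, and likewise $X^{g,b+\epsilon} \subseteq X^{f,b}$. Functoriality of homology then gives $\mathbb I^g_{a-\epsilon}(r) \subseteq \mathbb I^f_a(r)$ and $\mathbb I^{g,b+\epsilon}_g(r) \subseteq \mathbb I^b_f(r)$ as subspaces of $H_r(X)$, and intersecting these yields $\mathbb F^g_r(a-\epsilon, b+\epsilon) \subseteq \mathbb F^f_r(a,b)$.

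There is no real obstacle here; the observation is essentially a triviality marked by the author with \emph{Clearly one has}. The only point requiring mild care is the direction of the inequalities in (3): one must track that perturbing $f$ to $g$ by at most $\epsilon$ forces the $g$-interval to be shrunk (not enlarged) by $\epsilon$ on each side in order to land inside the $f$-construction. All three items are established without invoking tameness or any ANR-specific input beyond the definition of the subspaces $\mathbb I^f_a(r)$ and $\mathbb I^b_f(r)$.
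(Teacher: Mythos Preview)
Your argument is correct and is exactly the intended justification: the paper records this as an Observation prefaced by ``Clearly one has'' and gives no further proof, so your unpacking via the sublevel/superlevel inclusions and functoriality of homology is precisely what is meant. There is nothing to add.
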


Note that we also have the following  proposition. 
\begin{proposition}\label {P32}
If $f$ is a map as above 
then $\dim \mathbb F^f_r(a,b) <\infty.$ 
\end{proposition}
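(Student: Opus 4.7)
The plan is to localize the subspace $\mathbb F^f_r(a,b)$ to the image of the homology of a relatively compact open piece of $X$, and then bound the dimension of that image by replacing the piece with a compact Hilbert cube submanifold.

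First, Observation \ref{O31}(1) gives $\mathbb F_r^f(a,b)\subseteq \mathbb F_r^f(a',b')$ whenever $a'\geq a$ and $b'\leq b$, so I may enlarge the pair if necessary and assume $a\geq b$; then $f^{-1}[b,a]$ is compact because $f$ is proper. Fix any $\epsilon>0$ and set
\[
U_1:=f^{-1}(-\infty,a+\epsilon),\qquad U_2:=f^{-1}(b-\epsilon,\infty),\qquad U:=U_1\cap U_2=f^{-1}(b-\epsilon,a+\epsilon).
\]
These are open, $U_1\cup U_2=X$ (since $b-\epsilon<a+\epsilon$), and $\overline U\subseteq f^{-1}[b-\epsilon,a+\epsilon]$ is compact.

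Next, given $\alpha\in\mathbb F_r^f(a,b)$ write $\alpha=i_*\beta=j_*\gamma$ with $\beta\in H_r(X_a)$, $\gamma\in H_r(X^b)$. Pushing $\beta,\gamma$ forward along $X_a\hookrightarrow U_1$ and $X^b\hookrightarrow U_2$ gives $\beta'\in H_r(U_1)$, $\gamma'\in H_r(U_2)$ both mapping to $\alpha$ in $H_r(X)$. So $(\beta',\gamma')$ lies in the kernel of the Mayer--Vietoris map $H_r(U_1)\oplus H_r(U_2)\to H_r(X)$ for the open cover $\{U_1,U_2\}$, and by exactness it is the image of some $\eta\in H_r(U)$, whose class in $H_r(X)$ equals $\alpha$. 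Therefore $\mathbb F_r^f(a,b)\subseteq \img(H_r(U)\to H_r(X))$.

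Finally I bound this image using Section \ref{SS23}. Identify $X$ with $X\times\{0\}\subseteq X\times Q$; since $Q$ is contractible, the projection $X\times Q\to X$ is a homotopy equivalence and $H_r(X)\cong H_r(X\times Q)$. By Theorem \ref{T23}(1), $X\times Q$ is a Hilbert cube manifold, so the compact subset $\overline U\times\{0\}$ admits a compact Hilbert cube submanifold neighborhood $N\subseteq X\times Q$. By Theorem \ref{T23}(2), $N$ is homeomorphic to $K\times Q$ for some finite simplicial complex $K$, hence $H_r(N)\cong H_r(K)$ is finite-dimensional. The inclusion $U\times\{0\}\hookrightarrow X\times Q$ factors through $N$, so $\img(H_r(U)\to H_r(X))$ is contained in the image of the finite-dimensional space $H_r(N)$ in $H_r(X\times Q)\cong H_r(X)$, and is therefore finite-dimensional.

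The main obstacle is the middle step of the last paragraph, namely producing a compact Hilbert cube submanifold neighborhood of a given compact subset of a Hilbert cube manifold. This is where the non-compact ANR $X$ is effectively replaced by a finite simplicial complex; everything else is formal Mayer--Vietoris and properness of $f$.
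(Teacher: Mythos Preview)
Your Mayer--Vietoris step is actually cleaner than the paper's: by using the \emph{open} cover $\{U_1,U_2\}$ you avoid the weakly tame hypothesis that the paper needs for its closed cover $\{X_a,X^b\}$, and you arrive directly at $\mathbb F^f_r(a,b)\subseteq \img(H_r(U)\to H_r(X))$ with $\overline U$ compact. That part is fine.

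The final paragraph, however, has two issues. First, Theorem~\ref{T23}(1) as stated in the paper is only for \emph{compact} ANR's, so you cannot cite it to conclude that $X\times Q$ is a Hilbert cube manifold when $X$ is non-compact; you need the locally compact version of Edwards' theorem (which is in Chapman's notes~\cite{CH}, but is not what the paper records). Second, the existence of a compact Hilbert cube \emph{submanifold} neighborhood of a compact set, which you flag yourself, is not in the paper either. This gap is fillable but you are reaching for more than you need: once you have the non-compact triangulation theorem $X\times Q\cong K\times Q$ with $K$ locally finite, the compact set $\overline U\times Q$ projects into a finite subcomplex $L\subset K$, and the inclusion $U\times Q\hookrightarrow X\times Q$ then factors through $L\times Q$, whose homology is finite-dimensional. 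No ``submanifold neighborhood'' is required.

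The paper's own argument bypasses the Hilbert cube entirely: it quotes the fact that a locally compact ANR admits a proper homotopy equivalence $\pi:X\to K$ to a locally finite simplicial complex, picks a proper simplicial $g:K\to\mathbb R$, and compares $\mathbb F^f_r(a,b)$ with $\mathbb F^g_r(a',b')$ for suitable $a',b'$; the weakly tame case (where closed Mayer--Vietoris applies) then finishes it. Morally both routes rest on the same deep input---a locally compact ANR is modeled on a locally finite simplicial complex---but the paper invokes it directly, while you route it through $Q$-manifold triangulation. Your open Mayer--Vietoris localization is an improvement; your endgame is correct in spirit but, as written, leans on statements the paper does not provide.
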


\begin{proof} If $X$ is compact there is nothing to prove since $H_r(X)$ has finite dimension.
Suppose $X$ is not compact.  
In view of Observation \ref {O31} item 1. it  suffices to check the statement for $a > b.$ 
If $f$ is weakly tame in view of Observation \ref{O21} $X_a,$ $X^b$ and $X_a\cap X^b$ are ANR's, with $X_a\cap X^b$ compact and $X=X_a\cup X^b,$ hence the Mayer-Vietoris long exact sequence in homology is valid.  Denote by $i_a(r): H_r(X_a)\to H_r(X) $ and $i^b(r): H_r(X^b)\to H_r(X) $ the inclusion induced linear maps and observe that $\mathbb F_r(a,b):=\mathbb I_a\cap \mathbb I^b\subseteq (i_a(r))(\ker(i_a(r)- i^b(r)).$ In view of Mayer-Vietoris sequence in homology $\ker(i_a(r)- i^b(r))$ is isomorphic to a quotient of the vector space of $H_r(X_a\cap X^b),$ hence of finite dimension, and  the result holds. 

If $f$ is not weakly tame one argue as follows.
It is  known that  any  $X$ a locally compact ANR  is proper homotopy dominated with respect to any open cover by some locally finite simplicial complex $K.$  cf \cite{Ba75}
\footnote{as a replacement for an argument based on an incorrect reference the above argument and the reference was proposed by the referee}. 
Choose such cover for example  $f^{-1}(n-1, n+1)_{n\in \mathbb Z}$ and such a homotopy domination $\xymatrix{ X\ar[r]^i &K\ar[r]^\pi &X}$ for this cover. Choose $g:K\to \mathbb R$ 
 a proper simplicial approximation of $f\cdot \pi$ (hence tame), and $a'>a$ and $b'<b$ such that  $i (X^f_a) \subset K^g_{a'},$ $i (X_f^b) \subset K_g^{b'}.$ Then $\mathbb F^f_r(a,b)$ is isomorphic to a subspace of $\mathbb F^g_r(a', b').$  
Since  the dimension of $\mathbb F^g_r(a', b')$  is finite so is the dimension of $\mathbb F^f_r(a,b).$
\end{proof}

\begin{definition}\label {D1}
A real number $t$  is  a {\bf  homologically regular value} if there exists $\epsilon (t) >0$  s.t. for any  $0<\epsilon ,\epsilon(t) $
the inclusions $\mathbb I^f_{t-\epsilon}(r)\subset\mathbb I^f_t(r)\subset  \mathbb I^f_{t+\epsilon}(r)$  and 
$\mathbb I_f^{t-\epsilon}(r)\supset\mathbb I_f^t(r)\supset  \mathbb I_f^{t+\epsilon}(r)$ are equalities and a {\bf homologically critical value} if not a homologically regular value.
\end{definition}
Denote by $CR(f)$ the set of all homologically critical values. If $f$ is weakly tame  then $CR(f)\subseteq Cr(f).$ 

\begin{proposition} \label{P334}
If $f:X\to \mathbb R$ is a map (hence $X$ is ANR and $f$ is proper)  then $CR(f)$ is discrete. 
\end{proposition}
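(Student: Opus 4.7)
The plan is to show that $CR(f)$ is locally finite as a subset of $\mathbb{R}$ (every bounded interval meets $CR(f)$ in finitely many points), which immediately implies discreteness. The crux is a finite-codimension bound: for every $a<b$, both $\dim(\mathbb{I}^f_b(r)/\mathbb{I}^f_a(r))<\infty$ and $\dim(\mathbb{I}^a_f(r)/\mathbb{I}^b_f(r))<\infty$. Granted these, the proposition follows from an elementary observation: a monotone increasing family $\{V_s\}_{s\in[a,b]}$ of subspaces with $\dim(V_b/V_a)=n<\infty$ admits at most $n$ points of discontinuity, since each such point contributes a strict jump in the finite-dimensional $V_s/V_a$ as $s$ traverses $[a,b]$. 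Applied to $\{\mathbb{I}^f_s\}_s$ and to the reverse family $\{\mathbb{I}^s_f\}_s$, this yields finiteness of $CR(f)\cap[a,b]$ for every bounded interval $[a,b]$.

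For the finite-codimension bound in the weakly tame case, I would imitate the Mayer--Vietoris argument from the proof of Proposition \ref{P32}, applied now to the closed decomposition $X_b=X_a\cup f^{-1}([a,b])$ with intersection $f^{-1}(a)$. By weak tameness all three subspaces are ANRs, and by properness $f^{-1}([a,b])$ and $f^{-1}(a)$ are compact, hence have finite-dimensional homology by Observation \ref{O21}. The exact sequence
\[
H_r(f^{-1}(a))\to H_r(X_a)\oplus H_r(f^{-1}([a,b]))\to H_r(X_b)\to H_{r-1}(f^{-1}(a))
\]
exhibits the cokernel of $H_r(X_a)\to H_r(X_b)$ as a subquotient of $H_r(f^{-1}([a,b]))\oplus H_{r-1}(f^{-1}(a))$, hence finite-dimensional. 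Since $\mathbb{I}^f_b/\mathbb{I}^f_a$ is a further quotient of this cokernel via the map $H_r(X_b)\to H_r(X)$, it too is finite-dimensional. The analogous superlevel bound is obtained by symmetry (apply the same argument to $-f$).

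For general $f$, I would follow verbatim the reduction of Proposition \ref{P32}: invoke Milnor's theorem to obtain a locally finite simplicial complex $K$ with a proper homotopy equivalence $\pi:X\to K$ and proper homotopy inverse $\psi:K\to X$, and select a proper simplicial (hence tame) map $g:K\to\mathbb{R}$ together with $a''<a$ and $b''>b$ such that $\psi(K_{a''})\subseteq X_a$ and $\pi(X_b)\subseteq K_{b''}$. Under the isomorphism $\pi_*:H_r(X)\to H_r(K)$, these containments yield the chain $\mathbb{I}^g_{a''}\subseteq \pi_*(\mathbb{I}^f_a)\subseteq \pi_*(\mathbb{I}^f_b)\subseteq \mathbb{I}^g_{b''}$, whence $\dim(\mathbb{I}^f_b/\mathbb{I}^f_a)\leq \dim(\mathbb{I}^g_{b''}/\mathbb{I}^g_{a''})$, which is finite by the weakly tame case applied to the tame map $g$.

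The main obstacle lies entirely in this last step, namely the construction of the simplicial map $g$ together with parameters $a'',b''$ witnessing the required sublevel containments in the general case. It is handled identically to the corresponding construction in Proposition \ref{P32}, exploiting local finiteness of $K$ and properness of $f$ together with a sufficiently fine simplicial approximation of $f\circ\psi$.
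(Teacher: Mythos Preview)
Your argument is correct and follows essentially the same strategy as the paper: bound $\dim(\mathbb I^f_b(r)/\mathbb I^f_a(r))$ (and its superlevel analogue) by reducing to a proper simplicial map on a locally finite complex via a proper homotopy equivalence, then conclude that a monotone family of subspaces with finite total codimension has only finitely many jumps on $[a,b]$.

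There are only minor tactical differences. The paper does not separate out the weakly tame case; it passes directly to a simplicial $g:K\to\mathbb R$ with a single uniform bound $|f\cdot\alpha-g|<M$ (where $\alpha:K\to X$ is the proper homotopy equivalence), which immediately yields $\dim(\mathbb I^f_b(r)/\mathbb I^f_a(r))\leq \dim(\mathbb I^g_{b+M}(r)/\mathbb I^g_{a-M}(r))$, and then bounds the latter by the relative homology $\dim H_r(g^{-1}([a-M,b+M]),g^{-1}(a-M))$ rather than invoking Mayer--Vietoris. Your version instead uses both $\pi$ and its homotopy inverse $\psi$ to sandwich $\pi_*(\mathbb I^f_a)\subseteq\pi_*(\mathbb I^f_b)$ between $\mathbb I^g_{a''}$ and $\mathbb I^g_{b''}$, and handles the finiteness on the simplicial side via your earlier Mayer--Vietoris computation. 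Both routes lead to the same conclusion; the paper's uniform-bound formulation is slightly more economical, while your two-sided containment makes the use of the homotopy inverse explicit.
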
 
\begin{proof} %
As pointed out above  in the proof of Proposition \ref{P32} one can find a proper simplicial map $g:K\to \mathbb R$ and a proper homotopy domidominationnation $\alpha: K\to X$ s. t. $|f\cdot \alpha - g| <M.$ If so for any $a<b,$ $a,b \in \mathbb R$  one has $\dim (\mathbb I^f_b(r)/ \mathbb I^f_a(r))\leq \dim ( \mathbb I^g_{b+M}(r)/ \mathbb I^g_{a-M}(r)) \leq \dim (H_r(g^{-1}([a-M, b+M]), g^{-1}(a-M)) <\infty,$ which implies that there are only  finitely many changes in $\mathbb I^f_t(r)$ for $t$ with $a\leq t\leq b,$ 
Similar arguments show that there are only finitely many changes of $\mathbb I^t_f(r)$ for $t$ with $a\leq t\leq b.$ This suffices to have $CR(f)\cap [a,b]$ a finite set for any $a<b$ hence $ CR(f)$ discrete.
 \end{proof}
\begin{definition}
Define  by $\tilde \epsilon (f): = \inf |c'- c''|, c',c''\in CR(f), c'\ne c''$ and 
call $f$ homologically tame (w.r. to $\kappa$ if $\tilde \epsilon (f)>0.$ 
\end{definition} 
Clearly tame maps are  homologically tame w.r. to any field $\kappa$ and $\tilde \epsilon(f) >\epsilon(f).$

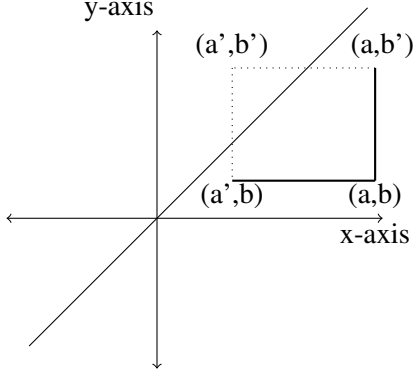
\begin{figure}[t]
\begin{tikzpicture}
\draw [<->]  (0,2.5) -- (0,0) -- (3,0);
\node at (-0.5,2.8) {y-axis};
\node at (2.9,-0.2) {x-axis};
\node at (1,0.3) {(a',b)};
\node at (2.9,0.3) {(a,b)};
\node at (3,2.3) {(a,b')};
\node at (1,2.3) {(a',b')};
\draw [<->]  (0,-2) -- (0,0) -- (-2,0);
\draw [thick] (1,0.5) -- (2.9,0.5);
\draw [thick] (2.9,0.5) -- (2.9,2);
\draw [dotted] (2.9,2) -- (1,2);
\draw [dotted] (1,2) -- (1,0.5);
\draw (0,0) -- (2.8,2.8);
\draw (0,0) -- (-1.7,-1.7);
\end{tikzpicture}
\caption {The {\it box} $ B : =(a',a]\times [b,b')\subset \mathbb R^2$  }
\end{figure}
\noindent Consider the  sets of the form $B= (a',a]\times [b,b')$  with $a'<a, b<b'$ and refer to $B$  as {\it  box} (Figure 1). 

\noindent To a box $B$ as above we assign the quotient  of subspaces $$\mathbb F^f_r(B):= \mathbb F^f_r(a,b)/ \mathbb F^f_r(a',b)+\mathbb F^f_r(a,b'),$$
and denote by 
\begin{equation*}
F^f_r(a,b): = \dim \mathbb F^f_r(a,b), \quad  F^f_r(B): = \dim \mathbb F^f_r(B).
\end{equation*}
In view of Observation \ref{O31} item 2. one has 
 $$F^f_r(B):=  F^f_r(a,b) + F^f(a',b') - F^f_r(a',b)-  F^f(a,b') .$$

It will also be convenient  to denote by 
$$ (\mathbb F^f_r)'(B):= \mathbb F^f_r(a',b)+ \mathbb F^f_r(a,b')\subseteq \mathbb F^f_r(a,b),$$ in which case $$\mathbb F^f_r(B)= \mathbb F^f_r(a,b)/(\mathbb F^f_r)'(B).$$ We denote by $\pi^B_{ab,r}$ 

\begin{equation}
\pi^B_{ab,r}: \mathbb F^f_r(a,b)\to \mathbb F_r^f(B)
\end{equation} the obvious projection.

To ease the writing, when  no risk of ambiguity, one  drops  $f$ 
from notations. 

\vskip .1in
If  $\kappa= \mathbb R$ or $\mathbb C$ and  $H_r(X)$  is equipped with inner product (non degenerate positive definite  hermitian scalar product) one 
denotes by  ${\bf H}_r(B)$
the orthogonal complement of $\mathbb F'_r(B)= (\mathbb F_r(a',b)+\mathbb F(a,b'))$ inside  $\mathbb F_r(a,b),$ which is a Hilbert space being finite dimensional,
and one  has $${\bf H}_r(B)\subseteq \mathbb F_r(a,b)\subseteq H_r(X).$$ 

\newcommand{\mynewnewpicture}[1][ ]{
\begin{tikzpicture} [scale=1]
\draw [dashed, ultra thick] (0,0) -- (0,3);
\draw [line width=0.10cm] (5,0) -- (5,3);
\draw [line width=0.10cm] (2,1) -- (2,3);
\draw [line width=0.10cm] (0,1) -- (2,1);
\draw [line width=0.10cm] (0,0) -- (5,0);
\draw [dashed, ultra thick] (0,3) -- (5,3);
\node at (1,2) {B'};
\node at (3.5,0.8) {B''};
\node at (2.5, -0.5) {Figure 4};
\end{tikzpicture}
\hskip .5in
\begin{tikzpicture} [scale=1]
\draw [dashed, ultra thick] (0,0) -- (0,3);
\draw [line width=0.10cm] (5,0) -- (5,3);
\draw [dashed, ultra thick] (3,1.5) -- (5,1.5);
\draw [dashed, ultra thick] (3,0) -- (3,1.5);
\draw [line width=0.10cm] (0,0) -- (5,0);
\draw [dashed, ultra thick] (0,3) -- (5,3);
\node at (1,2) {B''};
\node at (3.5, 0.8) {B'};
\node at (2.5, -0.5) {Figure 5};
\end{tikzpicture}}

\newcommand{\mynewpicture}[1][ ]{
\begin{tikzpicture} [scale=1]
\draw [dashed, ultra thick] (0,0) -- (0,3);
\draw [line width=0.10cm] (5,0) -- (5,3);
\draw [line width=0.10cm] (2,1) -- (2,3);
\draw [line width=0.10cm] (0,1) -- (2,1);
\draw [line width=0.10cm] (0,0) -- (5,0);
\draw [dashed, ultra thick] (0,3) -- (5,3);
\node at (1,2) {B'};
\node at (3.5,0.8) {B''};
\node at (2.5, -0.5) {Figure 4};
\end{tikzpicture}
\hskip .5in
\begin{tikzpicture} [scale=1]
\draw [dashed, ultra thick] (0,0) -- (0,3);
\draw [line width=0.10cm] (5,0) -- (5,3);
\draw [dashed, ultra thick] (3,1.5) -- (5,1.5);
\draw [dashed, ultra thick] (3,0) -- (3,1.5);
\draw [line width=0.10cm] (0,0) -- (5,0);
\draw [dashed, ultra thick] (0,3) -- (5,3);
\node at (1,2) {B''};
\node at (3.5, 0.8) {B'};
\node at (2.5, -0.5) {Figure 5};
\end{tikzpicture}}

\newcommand{\mypicture}[1][ ]{
\begin{tikzpicture} [scale=1]
\draw [line width=0.10cm] (0,0) -- (5,0);
\draw [line width=0.10cm] (5,0) -- (5,3);
\draw [dashed, ultra thick] (0,3) -- (5,3);
\draw [dashed, ultra thick] (0,0) -- (0,3);
\draw [line width=0.10cm] (3,0) -- (3,3);
\node at (1.5,1.5) {B1};
\node at (4,1.5) {B2};
\node at (2.5, -0.5){Figure 2};
\end{tikzpicture}
\hskip  .5in
\begin{tikzpicture}[ ] [scale=0.8]
\draw [line width=0.10cm] (7,0) -- (12,0);
\draw [line width=0.10cm] (12,0) -- (12,3);
\draw [dashed, ultra thick] (7,0) -- (7,3);
\draw [dashed, ultra thick] (7,3) -- (12,3);
\draw [line width=0.10cm] (7,1) -- (12,1);
\node at (9.5,2) {B1};
\node at (9.5,0.5) {B2};
\node at(9.5, -0.5) {Figure 3};
\end{tikzpicture}}

\begin{proposition}\label {P1}
Let $a''<a'<a ,$  $b< b'$ and $B_1,$ $B_2$ and $B$  the boxes $B_1= (a'',a']\times [b,b''),$ $B_2= (a',a]\times [b,b')$
and $B= (a'',a]\times [b,b') $ (see Figure 2).

1. The inclusions $B_1\subset B$ and $B_2\subset B$ induce the linear maps 
\begin{equation}
i^B_{B_1,r}: \mathbb F_r(B_1)\to  \mathbb F_r(B)\end{equation} and \begin{equation} \pi^{B_2}_{B,r}:  \mathbb F_r(B)\to  \mathbb F_r(B_2)\end{equation}  
such that the following sequence is exact
$$\xymatrix{0\ar [r]& \mathbb F_r(B_1)\ar[r]^{i^B_{B_1,r}} & \mathbb F_r(B)\ar[r]^{\pi^{B_2}_{B,r}} & \mathbb F_r(B_2)\ar[r]& 0}.$$ 

2. If $H_r(X)$ is equipped with a scalar product then 
$${\bf H}_r(B_1) \perp {\bf H}_r(B_2)$$ and
$${\bf H}_r(B)= {\bf H}_r(B_1) \oplus {\bf H}_r(B_2).$$
\end{proposition}

\begin{proposition}\label{P2}
Let $a'<a,$ $b<b'< b''$ and $B_1,$ $B_2$ and $B$  the boxes $B_1= (a',a]\times [b',b''),$ $B_2= (a',a]\times [b,b')$
and $B= (a',a]\times [b,b'') $ (see Figure 3). 

1. The inclusions $B_1\subset B$ and $B_2\subset B$ induce the linear maps 
\begin{equation} i^B_{B_1,r}:  \mathbb F_r(B_1)\to  \mathbb F_r(B)\end{equation} and \begin{equation} \pi^{B_2}_{B,r}:  \mathbb F_r(B)\to  \mathbb F_r(B_2)\end{equation}  such that the following sequence is exact
$$\xymatrix{0\ar [r]& \mathbb F_r(B_1)\ar[r]^{i^B_{B_1,r}} & \mathbb F_r(B)\ar[r]^{\pi^B_{B_2,r}} & \mathbb F_r(B_2)\ar[r]& 0}.$$ 

2. If $\kappa= \mathbb R$ or $\mathbb C$ and $H_r(X)$ is equipped with a scalar product then 
$${\bf H}_r(B_1) \perp {\bf H}_r(B_2)$$ and
$${\bf H}_r(B)= {\bf H}_r(B_1) \oplus {\bf H}_r(B_2).$$ 
\end{proposition}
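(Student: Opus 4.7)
The plan is to mirror the proof of Proposition~\ref{P1}, with the roles of horizontal and vertical coordinates interchanged. Part (1) reduces to formal manipulations of the lattice of subspaces $\mathbb{F}_r(a,b)\subseteq H_r(X)$, using the intersection identity from Observation~\ref{O31}(2), while part (2) is extracted from part (1) via the canonical orthogonal lifts together with a dimension count. The key structural observation is that the ``vertical'' inclusion $\mathbb{F}_r(a,b')\hookrightarrow\mathbb{F}_r(a,b)$ (valid because $b<b'$) induces on quotients a map realizing $\mathbb{F}_r(B_1)$ inside $\mathbb{F}_r(B)$, while the dual inclusion of denominators $\mathbb{F}'_r(B)\subseteq\mathbb{F}'_r(B_2)$ produces the projection onto $\mathbb{F}_r(B_2)$.

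For part (1), I would first check that the composition $\mathbb{F}_r(a,b')\hookrightarrow\mathbb{F}_r(a,b)\xrightarrow{\pi^B_{ab,r}}\mathbb{F}_r(B)$ descends to a map $i^B_{B_1,r}$ out of $\mathbb{F}_r(B_1)$; this uses only the monotonicity $\mathbb{F}_r(a',b')\subseteq\mathbb{F}_r(a',b)$ from Observation~\ref{O31}(1). Dually, the inclusion $\mathbb{F}'_r(B)\subseteq\mathbb{F}'_r(B_2)$, which follows from $\mathbb{F}_r(a,b'')\subseteq\mathbb{F}_r(a,b')$, yields $\pi^B_{B_2,r}$. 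Surjectivity of $\pi^B_{B_2,r}$ is automatic, and exactness at $\mathbb{F}_r(B)$ amounts to the identity $\mathbb{F}_r(a,b')+\mathbb{F}'_r(B)=\mathbb{F}'_r(B_2)$, again immediate from $\mathbb{F}_r(a,b'')\subseteq\mathbb{F}_r(a,b')$. The essential step is injectivity of $i^B_{B_1,r}$, which reduces to the inclusion $\mathbb{F}_r(a,b')\cap\mathbb{F}'_r(B)\subseteq\mathbb{F}'_r(B_1)$: given $x=y+z$ with $x\in\mathbb{F}_r(a,b')$, $y\in\mathbb{F}_r(a',b)$, and $z\in\mathbb{F}_r(a,b'')\subseteq\mathbb{F}_r(a,b')$, the element $y=x-z$ lies in $\mathbb{F}_r(a,b')\cap\mathbb{F}_r(a',b)=\mathbb{F}_r(a',b')$ by Observation~\ref{O31}(2), so $x\in\mathbb{F}_r(a',b')+\mathbb{F}_r(a,b'')=\mathbb{F}'_r(B_1)$.

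For part (2), the orthogonality ${\bf H}_r(B_1)\perp{\bf H}_r(B_2)$ is immediate: ${\bf H}_r(B_2)$ is orthogonal to $\mathbb{F}'_r(B_2)\supseteq\mathbb{F}_r(a,b')\supseteq{\bf H}_r(B_1)$. Likewise ${\bf H}_r(B_2)\subseteq{\bf H}_r(B)$, since the inclusion $\mathbb{F}'_r(B)\subseteq\mathbb{F}'_r(B_2)$ reverses under orthogonal complementation inside $\mathbb{F}_r(a,b)$. The direct sum identity then combines the dimension count $\dim{\bf H}_r(B)=\dim\mathbb{F}_r(B)=\dim\mathbb{F}_r(B_1)+\dim\mathbb{F}_r(B_2)$ (from the exact sequence of part (1)) with the canonical orthogonal identification ${\bf H}_r(B_1)\cong\mathbb{F}_r(B_1)\xrightarrow{i^B_{B_1,r}}\mathbb{F}_r(B)\cong{\bf H}_r(B)$. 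The main obstacle is matching these identifications cleanly as subspaces of $H_r(X)$: one must verify that the orthogonal lift of $\mathbb{F}_r(B_1)$ taken inside $\mathbb{F}_r(a,b')$ agrees, under $i^B_{B_1,r}$, with the corresponding orthogonal lift of its image inside the larger $\mathbb{F}_r(a,b)$, a compatibility that requires invoking part (1) together with the interaction between the inner products on $\mathbb{F}_r(a,b')$ and $\mathbb{F}_r(a,b)$.
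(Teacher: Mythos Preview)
Your argument is correct and follows the paper's approach exactly: part~(1) is deduced from Observation~\ref{O31} items~1 and~2, and the orthogonality in part~(2) from the fact that ${\bf H}_r(B_2)$ is orthogonal to $\mathbb F_r(a,b')\supseteq{\bf H}_r(B_1)$. The concern you flag at the end about the literal inclusion ${\bf H}_r(B_1)\subseteq{\bf H}_r(B)$ is legitimate, and the paper's own proof is just as terse on this point, offering nothing beyond the orthogonality statement.
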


\mypicture {}

\vskip .2in

\begin{proof} Item 1.  in both Propositions (\ref{P1}) and (\ref{P2})  follows from Observation \ref{O31}  items 1. and 2.
To conclude  item 2. note that ${\bf H}_r(B_2)$  as a subspace of $\mathbb F_r(a'',b)$ in Proposition \ref{P1} and  as a subspace of $\mathbb F_r(a,b'')$  in Proposition \ref{P2} is orthogonal to a subspace  which contains ${\bf H}_r(B_1).$
\end{proof}

In view of Propositions (\ref{P1}) and (\ref{P2}) above one has the following Observation. 
\begin{obs}\label {O35}\

\begin{enumerate}
\item If $B'$and $B''$ are two boxes with $B'\subseteq B''$ and  $B'$ is located in the upper left corner of $B''$ (see picture Figure 4) then the inclusion induces the canonical injective linear maps $i^{B''}_{B',r}: \mathbb F_r(B')\to \mathbb F_r(B'').$ 
\item If $B'$and $B''$ are two boxes with $B'\subseteq B''$ and  $B'$ is located in the lower right  corner of $B''$ (see picture Figure 5) then the inclusion induces the canonical surjective  linear maps $\pi^{B'}_{B'',r}: \mathbb F_r(B'')\to \mathbb F_r(B').$ 
\item If $B$ is a finite disjoint union of boxes $B=\sqcup B_i$ then $\mathbb F_r(B)$ is isomorphic to $\oplus _i \mathbb F_r(B_i);$
the isomorphism is not canonical. 
\item If in addition $\kappa= \mathbb R$ or $\mathbb C$ and $H_r(X)$ is a Hilbert space then 
${\bf H}_r(B)= \oplus_i {\bf H}_r(B_i).$
\end{enumerate}
\end{obs}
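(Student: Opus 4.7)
The plan is to reduce all four items to iterated applications of Propositions \ref{P1} and \ref{P2}, which together handle the elementary case of cutting a single box by one vertical or one horizontal line. The key feature recorded in those propositions is that in each short exact sequence the upper-left piece sits canonically as a subspace of $\mathbb F_r$ of the larger box, while the lower-right piece is canonically a quotient, and in the Hilbert-space setting these subspaces and quotients are realized by a canonical orthogonal decomposition.

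For Item 1, write $B''=(\alpha'',\alpha]\times[\beta,\beta'')$ and $B'=(\alpha'',\alpha']\times[\beta',\beta'')$ with $\alpha''<\alpha'\le\alpha$ and $\beta\le\beta'<\beta''$; this is what ``upper-left corner'' formally means. Apply Proposition \ref{P1} to the vertical cut $x=\alpha'$ to obtain the canonical injection $\mathbb F_r(L)\hookrightarrow\mathbb F_r(B'')$, where $L=(\alpha'',\alpha']\times[\beta,\beta'')$, and then Proposition \ref{P2} to the horizontal cut $y=\beta'$ inside $L$ to obtain a canonical injection $\mathbb F_r(B')\hookrightarrow\mathbb F_r(L)$. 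Composing yields the desired map $i^{B''}_{B',r}$. Item 2 is entirely dual: cut $B''$ vertically to isolate its right column and invoke the surjection side of Proposition \ref{P1}, then cut that column horizontally to isolate its bottom portion $B'$ and invoke the surjection side of Proposition \ref{P2}; the composition is $\pi^{B'}_{B'',r}$.

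For Items 3 and 4 the plan is induction on the number $n$ of subboxes in the decomposition. Any rectangular tiling of a box that arises from iterated axis-parallel cuts admits, by construction, a straight line cutting across all of $B$ and separating the subboxes into two nonempty classes; more generally one can always peel off a subbox whose closure meets the boundary of $B$ along a full edge, which produces the same kind of separating cut. Applying Proposition \ref{P1} or \ref{P2} to this cut yields a short exact sequence
\begin{equation*}
0\to\mathbb F_r(B^{(1)})\to\mathbb F_r(B)\to\mathbb F_r(B^{(2)})\to 0,
\end{equation*}
where $B^{(1)}$ and $B^{(2)}$ are tiled by strictly fewer subboxes. For Item 3 one picks any vector-space splitting (which is precisely where non-canonicity enters) and applies the induction hypothesis to each half, obtaining $\mathbb F_r(B)\cong\bigoplus_i\mathbb F_r(B_i)$. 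For Item 4 the Hilbert-space clauses of Propositions \ref{P1} and \ref{P2} replace the arbitrary splitting by the canonical orthogonal decomposition $\mathbf H_r(B)=\mathbf H_r(B^{(1)})\oplus\mathbf H_r(B^{(2)})$, and iterating the induction produces the canonical orthogonal sum $\mathbf H_r(B)=\bigoplus_i\mathbf H_r(B_i)$.

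The one step I expect to require care is the combinatorial preliminary in Items 3 and 4, namely the existence of a straight separating cut through the tiling. This is transparent for tilings built by iterated axis-parallel cuts, which is the situation actually arising elsewhere in the paper, but for a general disjoint union of axis-aligned boxes the existence of such a cut should either be incorporated explicitly into the hypothesis on the decomposition or be verified directly by the peeling argument sketched above.
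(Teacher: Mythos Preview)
Your approach is correct and matches the paper's, which offers no proof beyond the sentence ``In view of Propositions \ref{P1} and \ref{P2} above one has the following Observation.'' Your reductions for Items 1 and 2 are exactly what is intended, and the inductive scheme for Items 3 and 4 is the natural reading of that sentence.

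On the combinatorial point you flag: you are right to be cautious, since an arbitrary tiling $B=\sqcup_i B_i$ by boxes need not admit a straight separating cut (the standard pinwheel tiling of a square by five rectangles is a counterexample, and your ``peeling'' variant fails there too). The clean fix is not to search for such a cut but to pass to the common grid refinement: extend every vertical and horizontal edge of every $B_i$ all the way across $B$, obtaining a grid $B=\sqcup_{j,k} C_{jk}$ with each $B_i$ a union of grid cells. Iterated application of Propositions \ref{P1} and \ref{P2} (first slice $B$ into columns, then each column into cells) gives $\mathbb F_r(B)\cong\bigoplus_{j,k}\mathbb F_r(C_{jk})$ and, for each $i$, $\mathbb F_r(B_i)\cong\bigoplus_{(j,k)\subset B_i}\mathbb F_r(C_{jk})$; regrouping yields Item 3. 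In the Hilbert setting the same iteration gives $\mathbf H_r(B)=\bigoplus_{j,k}\mathbf H_r(C_{jk})$ and $\mathbf H_r(B_i)=\bigoplus_{(j,k)\subset B_i}\mathbf H_r(C_{jk})$ as honest orthogonal decompositions inside $H_r(X)$, so regrouping now yields the canonical equality of Item 4.
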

\vskip .2in

\mynewpicture { }

In view of the above observation denote by $B(a,b:\epsilon)= (a-\epsilon,a]\times [b, b+\epsilon)$ and define 

$$\boxed{ \hat {\delta}^f_r(a,b):=\varinjlim_{\epsilon \to 0}  \mathbb F_r(B(a,b;\epsilon)).}$$

The limit refers to the direct system $\mathbb F_r(B(a,b; \epsilon'))\to \mathbb F_r(B(a,b;\epsilon''))$ whose arrows are the surjective linear maps induced by  the inclusion of $B(a,b;\epsilon')$ as the lower right corner of $B(a,b;\epsilon'')$ for $\epsilon'<\epsilon''.$

Define also
$$\boxed{\delta^f_r(a,b):=\lim_{\epsilon \to 0}  F_r(B(a,b;\epsilon)).}$$ 
Clearly one has $\dim \hat \delta^f_r(a,b)= \delta^f_r(a,b).$
Denote by $\supp\ \delta^f_r$ the set  $$\boxed{\supp \ \delta^f_r:=\{(a,b)\in \mathbb R^2\mid \delta^f_r(a,b)\ne 0\}.}$$

\begin{obs} \label {O}\
 For any $(a,b)$, $a,b\in \mathbb R,$  the direct system stabilizes and  $\hat \delta_r^f(a,b)= \mathbb F^f(B(a,b;\epsilon))$ for some $\epsilon $ small enough.  Moreover $\delta^f_r(a,b)\ne 0$ implies that $a,b\in CR(f).$  In particular $\supp\  \delta^f_r$ is a discrete subset of $\mathbb R^2.$  
If $f$ is homologically tame then for any $(a,b), a,b\in CR(f),$  $\hat \delta_r^f(a,b)= \mathbb F^f(B(a,b;\epsilon))$ for any $\epsilon,$ $0<\epsilon <\tilde \epsilon (f).$
\end{obs}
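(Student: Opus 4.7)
I would prove the four claims in order, all of them resting on the finite-dimensionality supplied by Proposition \ref{P32} together with the structure of regular/critical values given in Definition \ref{D1}.

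For the stabilization and the identification $\hat\delta^f_r(a,b) = \mathbb F^f_r(B(a,b;\epsilon))$ at small $\epsilon$: each $\mathbb F_r^f(B(a,b;\epsilon))$ is a quotient of the finite-dimensional space $\mathbb F_r^f(a,b)$, and the transition maps of the direct system are surjective by Observation \ref{O35}(2), applied with the smaller box placed in the lower-right corner of the larger one. Hence, as $\epsilon \to 0$, the dimensions form a non-increasing sequence of non-negative integers and must stabilize at some $\epsilon_0 > 0$. A surjection between equi-dimensional finite-dimensional vector spaces is an isomorphism, so the direct system is eventually constant, and the colimit coincides with $\mathbb F_r^f(B(a,b;\epsilon))$ for any $0 < \epsilon \le \epsilon_0$.

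To show that $\delta^f_r(a,b) \ne 0$ implies $a,b \in CR(f)$, I argue by contrapositive. If $a$ is homologically regular with radius $\epsilon(a)$, then for $0 < \epsilon < \epsilon(a)$, Definition \ref{D1} gives $\mathbb I^f_{a-\epsilon}(r) = \mathbb I^f_a(r)$; intersecting with $\mathbb I_f^b(r)$ yields $\mathbb F^f_r(a-\epsilon,b) = \mathbb F^f_r(a,b)$, so $(\mathbb F^f_r)'(B(a,b;\epsilon)) = \mathbb F^f_r(a,b)$ and the quotient $\mathbb F^f_r(B(a,b;\epsilon))$ vanishes. The symmetric computation using $\mathbb I_f^{b+\epsilon}(r) = \mathbb I_f^b(r)$ handles the case of $b$ regular. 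Consequently $\supp \, \delta^f_r \subseteq CR(f)\times CR(f)$, which is discrete in $\mathbb R^2$ by Proposition \ref{P334}.

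For the tame case with $0 < \epsilon < \tilde\epsilon(f)$: by definition of $\tilde\epsilon(f)$, the open interval $(a-\epsilon,a)$ contains no homologically critical values, so every $t$ in it is regular and $\mathbb I^f_t(r)$ is locally constant near $t$. A standard connectedness argument (cover any compact sub-interval of $(a-\epsilon,a)$ by local-constancy neighborhoods, extract a finite subcover, and chain equalities) shows that $\mathbb I^f_t(r)$ is independent of $t \in (a-\epsilon,a)$. Intersecting with $\mathbb I_f^b(r)$, $\mathbb F^f_r(a-\epsilon',b)$ is independent of $\epsilon' \in (0,\tilde\epsilon(f))$, and the symmetric argument on the $b$-axis shows the same for $\mathbb F^f_r(a,b+\epsilon')$. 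Hence $(\mathbb F^f_r)'(B(a,b;\epsilon))$, and therefore the quotient $\mathbb F^f_r(B(a,b;\epsilon))$, is independent of $\epsilon$ in that range.

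The main technical point requiring care is the connectedness/propagation step in the tame case: Definition \ref{D1} furnishes only local constancy near each regular value, and one must glue these local equalities across the entire open interval $(a-\epsilon,a)$. Once this propagation is secured, the remaining content of the observation reduces to straightforward bookkeeping with finite-dimensional quotients.
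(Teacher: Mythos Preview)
Your proof is correct and is precisely the argument the paper has in mind; the paper states this as an ``Observation'' and gives no explicit proof, leaving the reader to supply exactly the finite-dimensionality/surjectivity stabilization for the first claim, the contrapositive via Definition~\ref{D1} for the second, and the local-constancy-on-a-connected-interval argument for the tame case. Your handling of the propagation step in the last part (locally constant on the connected open interval $(a-\tilde\epsilon(f),a)$ of regular values, hence globally constant there) is the right way to bridge Definition~\ref{D1} to the full interval, and nothing further is needed.
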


\vskip .1in
Recall that for a box $B= (a',a]\times [b,b')$  we have denoted by $\pi^B_{ab,r}: \mathbb F_r(a,b)\to \mathbb F_r(B)$  the canonical projection on $\mathbb F_r(B)= \mathbb F(a,b)/ \mathbb F'(B)$  and for $B'= (a'',a]\times [b,b''),$  $a''\leq a'<a, \  \ b''\geq b'>b,$  we have denoted by $\pi^B_{B',r}: \mathbb F_r(B')\to \mathbb F_r(B)$ the canonical surjective linear map between  quotients spaces induced by $\mathbb F'(B')\subset \mathbb F'(B) \subset \mathbb F(a,b)$. Clearly $$\pi^B_{ab,r}= \pi^B_{B',r}\cdot \pi^{B'}_{ab,r}.$$

Consider  the surjective linear map 
$$\pi_r(a,b): \mathbb F(a,b)\to \varinjlim_{\epsilon \to 0} \mathbb F(B(a,b;\epsilon))=\hat \delta^f_r(a,b)$$
with  $$\pi_r(a,b):= \varinjlim_{\epsilon \to 0} \pi^{B(a,b;\epsilon)}_{ab,r}.$$

\begin{definition}
A {\it special splitting} is a linear map $$s_r(a,b): \hat \delta^f_r(a,b)\to \mathbb F_r(a,b)$$  which 
 satisfies $\pi_r(a,b)\cdot s_r(a,b)= id.$  In particular, in view of Observation \ref{O31}, for any $\alpha>a, \ \beta< b$ we have $\img(s_r(a,b))\subset \mathbb F_r(\alpha, \beta).$ 
\end{definition}
One denotes by $i_r(a,b)$ the composition of $s_r(a,b)$ with the inclusion $\mathbb F_r(a,b)\subset H_r(X)$

The following diagram reviews for the reader the linear maps considered so far. 
In this diagram 
suppose $B=(\alpha',\alpha]\times [\beta, \beta')$ with $a\in (\alpha',\alpha]$ and $b\in [\beta, \beta')$ and $B= B_1 \sqcup B_2$ as in Figure 2 or Figure 3.

\begin{equation}\label {D111}
\xymatrix{ H_r( X) & \mathbb F_r(a,b)\ar[d]_{\pi^B_{ab,r}}\ar[l]_\supseteq \ar[r]_{\pi_r(a,b)} &\hat \delta^{\tilde f}_r(a,b)\ar@/_1pc/[l]_{s_r(a,b)}\ar@/_2 pc/[ll]_{i_r(a,b)}   \ar[ld]^{i^B_r(a,b)}\\
\mathbb F_r(B_1)\ar[r]^{i^B_{B',r}}&\mathbb F_r(B)\ar[r]_{\pi^{B_2}_{B,r}}&\mathbb F_r(B_2).}
\end{equation}
Observe that if $B=B_1\sqcup B_2$ as in Figure 2 or Figure 3, in view of Observations \ref{O35} and \ref{O},  one has 
\begin{obs}\label {O36}\
\begin{enumerate}
\item If $(a,b)\in B_2$  then $\pi^{B_2}_{B,r}\cdot  i^B_r(a,b)$ is injective. 
\item If $(a,b)\in B_1$  then $\pi^{B_2}_{B,r}\cdot  i^B_r(a,b)$ is zero.
\end{enumerate}
 \end{obs}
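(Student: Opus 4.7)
The plan is to unwind diagram~\eqref{D111} and reduce the claim to two subspace comparisons inside $H_r(X)$. To avoid the paper's overloaded notation I relabel the generic point as $(u,v)$ and keep $B=(a'',a]\times[b,b')$, $B_1=(a'',a']\times[b,b')$, $B_2=(a',a]\times[b,b')$ as in Proposition~\ref{P1}; the analogous case of Proposition~\ref{P2} (Figure~3) proceeds by the symmetric argument swapping the roles of the two coordinates. By commutativity of \eqref{D111}, $i^B_r(u,v)=\pi^B_{uv,r}\circ s_r(u,v)$, so
\[
\pi^{B_2}_{B,r}\cdot i^B_r(u,v)\;=\;\bigl(\pi^{B_2}_{B,r}\circ\pi^B_{uv,r}\bigr)\circ s_r(u,v)\;=\;\pi^{B_2}_{uv,r}\circ s_r(u,v),
\]
where $\pi^{B_2}_{uv,r}\colon\mathbb F_r(u,v)\to\mathbb F_r(B_2)$ is the composite of the inclusion $\mathbb F_r(u,v)\hookrightarrow\mathbb F_r(a,b)$ with the quotient by $\mathbb F'_r(B_2)=\mathbb F_r(a',b)+\mathbb F_r(a,b')$.

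For part~(2), when $(u,v)\in B_1$ one has $u\le a'$ and $v\ge b$; the monotonicity of $\mathbb I^f_\bullet(r)$ and $\mathbb I^\bullet_f(r)$ from Observation~\ref{O31}(1) gives $\mathbb F_r(u,v)\subseteq\mathbb F_r(a',b)\subseteq\mathbb F'_r(B_2)$, so $\pi^{B_2}_{uv,r}\equiv 0$ on $\mathbb F_r(u,v)$ and the composition vanishes irrespective of the splitting.

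For part~(1), when $(u,v)\in B_2$ the key lemma is
\[
\mathbb F'_r(B_2)\cap\mathbb F_r(u,v)\ \subseteq\ \mathbb F'_r(B(u,v;\epsilon))\qquad\text{for any }\epsilon\in(0,\min(u-a',\,b'-v)).
\]
Granting this, $\ker(\pi^{B_2}_{uv,r}|_{\mathbb F_r(u,v)})\subseteq\ker\pi_r(u,v)$, so $\pi^{B_2}_{uv,r}$ descends to an injective map $\tilde\pi\colon\hat\delta^f_r(u,v)\to\mathbb F_r(B_2)$; since $\pi_r(u,v)\circ s_r(u,v)=\mathrm{id}$, one gets $\pi^{B_2}_{uv,r}\circ s_r(u,v)=\tilde\pi$, which is both injective and independent of the chosen splitting.

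The main step requiring attention is the lemma, because the naive identity $(A+B)\cap C=(A\cap C)+(B\cap C)$ is false in general and so the inclusion cannot be obtained by formal manipulation. Given $z=x+y$ with $x\in\mathbb F_r(a',b)$, $y\in\mathbb F_r(a,b')$, and $z\in\mathbb F_r(u,v)$, the strict inequalities $a'<u$ and $v<b'$ give $x\in\mathbb I^f_{a'}(r)\subseteq\mathbb I^f_u(r)$ and $y\in\mathbb I^{b'}_f(r)\subseteq\mathbb I^v_f(r)$. Solving $y=z-x$ then places $y$ in $\mathbb I^f_u(r)\cap\mathbb I^{b'}_f(r)=\mathbb F_r(u,b')\subseteq\mathbb F_r(u,v+\epsilon)$, and symmetrically $x=z-y$ places $x$ in $\mathbb I^f_{a'}(r)\cap\mathbb I^v_f(r)=\mathbb F_r(a',v)\subseteq\mathbb F_r(u-\epsilon,v)$, so $z=x+y\in\mathbb F'_r(B(u,v;\epsilon))$.
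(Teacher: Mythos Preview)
Your argument for part~(2) is correct, and your key lemma for part~(1),
\[
\mathbb F'_r(B_2)\cap\mathbb F_r(u,v)\ \subseteq\ \mathbb F'_r\bigl(B(u,v;\epsilon)\bigr)=\ker\pi_r(u,v),
\]
is both the right statement and correctly proved. The problem is the sentence that follows it. From the inclusion $K_2:=\ker\bigl(\pi^{B_2}_{uv,r}\big|_{\mathbb F_r(u,v)}\bigr)\subseteq K_1:=\ker\pi_r(u,v)$ you \emph{cannot} conclude that $\pi^{B_2}_{uv,r}$ descends to a map $\tilde\pi\colon \hat\delta^f_r(u,v)=\mathbb F_r(u,v)/K_1\to\mathbb F_r(B_2)$; descent requires $K_1\subseteq K_2$, the opposite inclusion. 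In fact $K_1\subseteq K_2$ is false in general: if $B_2$ contains a second support point $(u',v')$ with $u'<u$ and $v'>v$, then $\mathbb F_r(u-\epsilon,v)\subseteq K_1$ carries a contribution from $(u',v')$ that is not in $\mathbb F'_r(B_2)$. Consequently your claim that $\pi^{B_2}_{uv,r}\circ s_r(u,v)$ is independent of the splitting is also incorrect.

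The fix is immediate and uses exactly your lemma. If $w\in\hat\delta^f_r(u,v)$ satisfies $\pi^{B_2}_{uv,r}\bigl(s_r(u,v)(w)\bigr)=0$, then $s_r(u,v)(w)\in K_2\subseteq K_1$, so $w=\pi_r(u,v)\bigl(s_r(u,v)(w)\bigr)=0$. Equivalently, $K_2\subseteq K_1$ means $\pi_r(u,v)$ (not $\pi^{B_2}_{uv,r}$) factors through the image of $\pi^{B_2}_{uv,r}$, giving a left inverse to $\pi^{B_2}_{uv,r}\circ s_r(u,v)$. This yields injectivity for every choice of splitting, which is all the Observation asserts.

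The paper itself does not give a detailed argument here; it simply records that the statement follows from Observations~\ref{O35} and~\ref{O}. Your direct subspace computation is a perfectly good (and more explicit) substitute once the descent step is repaired.
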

\vskip .1in 

Choose special splittings $\{ s_r(a,b)\mid (a,b)\in \supp (\delta^{\tilde f}_r)\},$ 
and consider  the sum of $i_r(a,b)'$s for $(a,b)\in \supp (\delta^{\tilde f}_r).$
$$ I_r= \sum _{(a,b)\in \supp (\delta^{\tilde f}_r)} i_r(a,b): \bigoplus_{(a,b)\in \supp (\delta^{\tilde f}_r)} \hat\delta^f_r(a,b) \to H_r(X).$$ 
 and for a finite or infinite box $B$  
the sum 
$$I^B_r= \sum_{(a,b)\in \supp (\delta^{\tilde f}_r) \cap B} i^B_r(a,b): \bigoplus_{(a,b)\in \supp (\delta^{\tilde f}_r) \cap B} \hat\delta^f_r(a,b) \to \mathbb F_r(B).$$ 

For $\Sigma\subseteq \supp (\delta^f_r)$ denote by $I_r(\Sigma)$ the restriction of $I_r$ to $\bigoplus_{(a,b)\in \Sigma} \hat\delta^f_r(a,b) $ and for $\Sigma \subseteq  \supp (\delta^f_r) \cap B$  denote by  $I^B_r(\Sigma)$ the restriction of $I^B_r$ to $\bigoplus_{(a,b)\in \Sigma} \hat \delta^f_r(a,b).$
Note that: 
\begin{obs} \label {O37}\ 

For $B=B_1\sqcup B_2$ as in Figures 2 or Figure 3 and $\Sigma\subseteq \supp\ \delta ^{\tilde f}_r $ with $\Sigma= \Sigma_1 \sqcup \Sigma _2$, $\Sigma_1 \subseteq B_1, \Sigma_2\subseteq B_2$ the  diagram 
$$\xymatrix{ \mathbb F_r(B_1) \ar[r] & \mathbb F_r(B) \ar[r] &\mathbb F_r(B_2)\\
\bigoplus _{(a,b)\in \Sigma_1}\hat \delta^{\tilde f}_r(a,b) \ar[u]_{I^{B_1}_r(\Sigma_1)} \ar[r]&\bigoplus_{(a,b)\in \Sigma} \hat \delta^{\tilde f}_r(a,b) \ar[u]_{I^{B}_r(\Sigma)}\ar[r] &\bigoplus_{(a,b)\in \Sigma_2} \hat \delta^{\tilde f}_r(a,b) \ar[u]_{I^{B_2}_r(\Sigma_2)}}
$$
is commutative.  In particular if $I^{B_1}_r(\Sigma_1) $ and $I^{B_2}_r(\Sigma_2)$ are injective then so is $I^B_r(\Sigma).$
\end{obs}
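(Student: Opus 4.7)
My plan is to verify commutativity of each square pointwise on elements $\xi \in \hat\delta^f_r(a_0, b_0)$ for $(a_0, b_0) \in \Sigma$. Fixing special splittings, set $\eta := s_r(a_0, b_0)(\xi) \in \mathbb F_r(a_0, b_0)$. Every map in the diagram is induced either by an inclusion $\mathbb F_r(a_0', b_0') \subseteq \mathbb F_r(a_0, b_0)$ of $\mathbb F$-subspaces (via Observation~\ref{O31}(1)) or by quotienting by some subspace of the form $\mathbb F'(\cdot)$. Commutativity will therefore reduce to a formal comparison of compositions of inclusions and quotient maps, with all required containments $\mathbb F'(\text{smaller box}) \subseteq \mathbb F'(\text{bigger box})$ following from the same monotonicity of $\mathbb F_r$.

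For the \emph{left square}, suppose $(a_0, b_0) \in \Sigma_1 \subseteq B_1$. In the Figure~2 setup, $\mathbb F_r(a_0, b_0) \subseteq \mathbb F_r(a', b)$ and $i^B_{B_1, r}$ is induced by the inclusion $\mathbb F_r(a', b) \hookrightarrow \mathbb F_r(a, b)$; hence $i^B_{B_1, r}\bigl(\pi^{B_1}_{a_0 b_0, r}(\eta)\bigr)$ is simply the class of $\eta$ in $\mathbb F_r(a, b)/\mathbb F'(B) = \mathbb F_r(B)$, which is exactly $\pi^B_{a_0 b_0, r}(\eta) = i^B_r(a_0, b_0)(\xi)$. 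The Figure~3 setup is analogous with the roles of the $a$- and $b$-coordinates interchanged. For the \emph{right square}, $\pi^{B_2}_{B, r}$ is the canonical surjection induced by $\mathbb F'(B) \subseteq \mathbb F'(B_2)$; for $(a_0, b_0) \in \Sigma_2 \subseteq B_2$ the composition $\pi^{B_2}_{B, r} \circ \pi^B_{a_0 b_0, r}$ sends $\eta$ to its class modulo $\mathbb F'(B_2)$, which coincides with $\pi^{B_2}_{a_0 b_0, r}(\eta) = i^{B_2}_r(a_0, b_0)(\xi)$, while for $(a_0, b_0) \in \Sigma_1$ the composition vanishes by Observation~\ref{O36}(2), matching the zero map on the $\Sigma_1$-summand.

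The ``in particular'' clause is a short diagram chase. If $I^B_r(\Sigma)(\eta_1 + \eta_2) = 0$ with $\eta_j \in \bigoplus_{(a,b)\in\Sigma_j} \hat\delta^f_r(a, b)$, then applying $\pi^{B_2}_{B, r}$ and invoking right-square commutativity yields $I^{B_2}_r(\Sigma_2)(\eta_2) = 0$, forcing $\eta_2 = 0$ by hypothesis. Substituting back, $I^B_r(\Sigma)(\eta_1) = 0$, and the left square gives $i^B_{B_1, r}\bigl(I^{B_1}_r(\Sigma_1)(\eta_1)\bigr) = 0$; since $i^B_{B_1, r}$ is injective by the short exact sequence of Propositions~\ref{P1}/\ref{P2} and $I^{B_1}_r(\Sigma_1)$ is injective by hypothesis, $\eta_1 = 0$.

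The main obstacle is bookkeeping rather than real difficulty: one must keep track of which $\mathbb F'$-subspace is being quotiented at each stage and verify the inclusions $\mathbb F'(B_1) \subseteq \mathbb F'(B)$ and $\mathbb F'(B) \subseteq \mathbb F'(B_2)$ (and their Figure~3 analogues) that make every quotient map well-defined. Each such inclusion is an immediate application of Observation~\ref{O31}(1), so no step requires genuinely new ideas beyond those already established in Propositions~\ref{P1} and \ref{P2}.
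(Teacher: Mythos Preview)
Your proof is correct. The paper states this as an Observation without any proof, treating the commutativity and the injectivity consequence as immediate from the definitions of $i^B_r(a,b)$, $i^B_{B_1,r}$, $\pi^{B_2}_{B,r}$ together with Observation~\ref{O36} and the short exact sequence of Propositions~\ref{P1}/\ref{P2}; your element-chase supplies precisely those routine details.
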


If $\kappa= \mathbb R$ or $\mathbb C$ and $H_r(X)$ is equipped with a Hilbert space structure, then the inverse of the restriction of $\pi_r(a,b)$ to the orthogonal complement of $\ker(\pi_r(a,b))$ provides a {\it canonical special splitting}. For the canonical special splitting one denotes by 
$\hat{\hat \delta}^f_r$ the and consider the assignment $$\hat{\hat\delta}^f_r (a,b)= {\bf H}_r(a,b):= \img\ s_r(a,b).$$
Then if $X$ is compact in view of Observation \ref{O35} item 4. the assignment $\hat{\hat\delta}^f_r$ is a configuration $\mathcal C^O_{H_r(X)}(\mathbb R^2).$ 
The  configuration 
$\hat {\hat \delta}^f_r (a,b)$ 
has  the  configuration 
$\delta^f_r\in \mathbb C^{\dim H_r(X)}$ as its dimension. 

\vskip .1in
Let $f$ be a map and for  any $(a,b)\in \mathbb R^2$ 
choose a special splitting $s_r(a,b): \hat \delta^f_r(a,b)\to H_r(X).$

\begin{obs}\label {P24}\

\begin{enumerate}
\item For any $\Sigma \subseteq \supp (\delta^f_r)$ resp. $\Sigma\subseteq \supp (\delta^f_r)\cap B$ the linear maps $I_r(\Sigma)$ resp. 
$I^B_r(\Sigma)$ are injective.
\item For any  box $B= (a',a]\times[b,b') 
$  the set $ \delta^f_r\cap B$ is  finite. 
\item  For any box $B,$  the linear map $I^B_r$ is an isomorphism.  
\item If  $X$ compact, $m< \inf f$ and $M> \sup f$ then $H_r(X)= \mathbb F_r((m,M]\times [m, M))$  and $I_r$ is an isomorphism. Therefore  for any special splittings  the collection of subspaces $\img(i_r(a,b))$ provide a configuration, of subspaces of $H_r(X)$ hence and element in $\mathcal C_{H_r(X)}(\mathbb R^2).$
\end{enumerate}
\end{obs}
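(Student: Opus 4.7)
I would prove the four items in the order (2), (1), (3), (4), since each builds on its predecessor. Item (2) is immediate from Observation \ref{O} and Proposition \ref{P334}: $\supp \delta^f_r$ is a discrete subset of $\mathbb R^2$, and the closed box $\overline B$ is compact, so their intersection is finite.

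For (1), I induct on $|\Sigma|$. In the base case $\Sigma = \{(a_0, b_0)\}$, I split $B = B_1 \sqcup B_2$ via Proposition \ref{P1} or \ref{P2} with $(a_0, b_0) \in B_2$; by Observation \ref{O36}(1), $\pi^{B_2}_{B,r} \cdot i^B_r(a_0, b_0)$ is injective, which forces $i^B_r(a_0, b_0)$ to be injective. For $|\Sigma| \geq 2$, distinct points of $\Sigma$ differ in at least one coordinate, so I can find a vertical or horizontal line splitting $B = B_1 \sqcup B_2$ with both $\Sigma_i := \Sigma \cap B_i$ nonempty. Given $v = v_1 + v_2 \in \ker I^B_r(\Sigma)$, the commutative diagram of Observation \ref{O37} (whose top row is the short exact sequence of Proposition \ref{P1} or \ref{P2}) combined with $\pi^{B_2}_{B,r}$ gives $I^{B_2}_r(\Sigma_2)(v_2) = 0$, so $v_2 = 0$ by induction; then $I^B_r(\Sigma)(v_1) = i^B_{B_1,r}(I^{B_1}_r(\Sigma_1)(v_1)) = 0$, and the injectivity of $i^B_{B_1,r}$ (from exactness) together with the inductive injectivity of $I^{B_1}_r(\Sigma_1)$ forces $v_1 = 0$. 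The statement for $I_r(\Sigma)$ reduces to the $I^B_r$ case by enclosing $\Sigma$ in a sufficiently large box.

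For (3), injectivity of $I^B_r$ is immediate from (1); I obtain surjectivity by a dimension count. Let $\Sigma = \supp \delta^f_r \cap B$, finite by (2). Choose a grid $a' = a_0 < \cdots < a_N = a$ and $b = b_0 < \cdots < b_M = b'$ containing all $x$- and $y$-coordinates of $\Sigma$, refined so that each sub-box $B_{ij} = (a_{i-1}, a_i] \times [b_{j-1}, b_j)$ is small enough that the stabilization of Observation \ref{O} already applies at the corner $(a_i, b_{j-1})$. Iterating Propositions \ref{P1} and \ref{P2} yields
\[
\dim \mathbb F^f_r(B) = \sum_{i, j} \dim \mathbb F^f_r(B_{ij}).
\]
Sandwiching $B_{ij}$ between two square corner boxes $B(a_i, b_{j-1}; \epsilon) \subseteq B_{ij} \subseteq B(a_i, b_{j-1}; \eta)$ and applying the surjections of Observation \ref{O35}(2) identifies $\mathbb F^f_r(B_{ij})$ with $\hat\delta^f_r(a_i, b_{j-1})$ when that corner lies in $\supp \delta^f_r$ and with $0$ otherwise. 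The right-hand side therefore equals $\sum_{(a,b) \in \Sigma} \delta^f_r(a, b)$, matching the source dimension of $I^B_r$; combined with injectivity, $I^B_r$ is an isomorphism.

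Finally, for (4), the hypotheses give $X_m = \emptyset = X^M$ and $X_M = X = X^m$, so $\mathbb F^f_r(m, m) = \mathbb F^f_r(M, M) = 0$ and $\mathbb F^f_r(M, m) = H_r(X)$; hence $\mathbb F^f_r((m, M] \times [m, M)) = H_r(X)$. Every support point has both coordinates in $CR(f) \subset (m, M)$, so lies inside this box; thus $I_r = I^B_r$ is an isomorphism by (3), producing the claimed direct-sum decomposition of $H_r(X)$ via the images $\img(i_r(a, b))$. The main obstacle is the dimension count in (3): one must arrange the grid so the stabilization of the direct-limit definition of $\hat\delta^f_r$ holds uniformly on every sub-box, and then propagate the two-term exact sequences of Propositions \ref{P1} and \ref{P2} into a multi-box additivity statement.
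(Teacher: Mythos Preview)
Your proof is correct and follows essentially the same strategy as the paper's: induction via box-splitting for injectivity, and a dimension count using Observations \ref{O35} and \ref{O} for surjectivity. Two organizational differences are worth noting.

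First, you prove item (2) independently, before (1), via discreteness and compactness; the paper instead deduces (2) \emph{from} (1), observing that injectivity of $I^B_r(\Sigma)$ forces $|\Sigma|\le \dim\mathbb F_r(a,b)<\infty$ (Proposition \ref{P32}). Your route is more direct but needs one extra remark: ``discrete'' alone does not force finiteness in a bounded box (think of $\{1/n\}$); what you really use is that the proof of Proposition \ref{P334} gives $CR(f)\cap[a,b]$ \emph{finite} for every compact interval, so $\supp\delta^f_r\subseteq CR(f)\times CR(f)$ is locally finite, hence closed and discrete, and its intersection with $\overline B$ is finite.

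Second, for item (1) the paper runs a two-stage induction (first on points sharing a common first coordinate, splitting horizontally; then on the number of distinct first coordinates, splitting vertically), while you run a single induction on $|\Sigma|$, choosing either a vertical or horizontal cut that separates $\Sigma$ nontrivially. Both arguments rest on the same commutative diagram (Observation \ref{O37}) and short exact sequences (Propositions \ref{P1}--\ref{P2}); yours is a bit more economical. Your explicit grid argument in (3) simply unpacks what the paper compresses into the phrase ``implied by Observations \ref{O35} and \ref{O}.''
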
 

\begin {proof} 
Item 1.: If $\Sigma \subset B$ then in view of  Observations \ref{O36} and \ref{O37}  the injectivity  of $I^B_r(\Sigma)$ implies the infectivity of of $I^{B'}_r(\Sigma)$ for any box $B'\supseteq B$ as well as the injectivity of $I_r(\Sigma).$
To check the injectivity of $I^B_r(\Sigma)$ one proceed as follows:

\begin{itemize}
\item If  cardinality of $\Sigma$ is one, then the statement follows from Observation\ref{O36}.
\item If  all elements of $\Sigma,$ $(\alpha_i,\beta_i)$ $i=1,\cdots k$ have the same first component $\alpha_i=a$ the statement follow by induction on $k.$  One writes the box $B= B_1\sqcup B_2$  as in Figure 2 such the $B_2$ contains one element of $\Sigma$, say $(\alpha_1,\beta_1)$  and $B_1$ contains the remaining $(k-1)$ elements. The injectivity follows from Observation\ref{O37} in view of injectivity of $I^{B_2}_r(\Sigma\cap B_2)$ and of $I^{B_1}_r(\Sigma\cap B_1),$ assumed by  the induction hypothesis. 
\item In general one writes  $\Sigma$ as the disjoint union $\Sigma= \Sigma_1\sqcup \Sigma_2\sqcup \cdots \sqcup \Sigma_k$ such that each $\Sigma _i$ contains  all points of $\Sigma$ with the same first component $a_i,$ and $a_k >a_{k-1}\cdots a_2 >a_1.$  One proceeds again by induction on $k.$
One decomposes the box $B$ as in Figure 3,  $B= B_1\sqcup B_2$  such that  $\Sigma _1\subset B_2$ and $(\Sigma \setminus \Sigma_1) \subset B_1$  The injectivity of $I^B_r(\Sigma)$ follows then using Observation \ref{O37}  from the injectivity of $I^{B_2}_r(\Sigma_1)$ and the induction hypothesis which assumes the injectivity of $I^{B_1}_r(\Sigma\cap B_1).$  
\end{itemize}

Item 2. : In view of Item 1. any subset of $\supp (\delta^f_r)\cap B$ with $B=(a',a]\times [b,b')$ has cardinality smaller than $\dim \mathbb F_r(a,b)$ which by 
Proposition \ref {P32} is finite . Hence $\Sigma$ is finite. 

Item 3.: The injectivity of $I^B_r$ is insured by Item 1. The surjectivity follows from the equality of the dimension of the source and of the target 
implied by Observations \ref{O35} and \ref{O}.

Item 4.: follows from definitions and Item 3.

\end{proof}

In case $X$ is not compact for the needs of part II of this paper it is useful to  extend   item 3. of Proposition \ref{P24} to the case of infinite box,  precisely $B(a,b; \infty):= (-\infty,a]\times [b,\infty)$ and evaluate the image of $I_r$ which might not be a finite dimensional space.
For this purpose introduce 
\begin{enumerate}
\item $\mathbb I^f_{-\infty}(r)= \cap _{a\in \mathbb R} \mathbb I^f_a(r)$ and $\mathbb I_f^{\infty}(r)= \cap _{b\in \mathbb R} \mathbb I_f^b(r),$
\item $\mathbb F^f_r(-\infty,b):= \mathbb I^f_{-\infty}(r) \cap \mathbb I^b_f(r)$ and $\mathbb F^f_r(a,\infty):=  \mathbb I^f_a(r) \cap \mathbb I^\infty_f(r),$ 
\item $(\mathbb F^f)'_r (B(a,b;\infty)):= \mathbb F^f_r(-\infty,b) + \mathbb F^f_r(a,\infty),$
\item $\mathbb F^f_r(B(a,b;\infty)): = \mathbb F^f_r(a,b) / (\mathbb F^f)'_r(B(a,b;\infty)).$

\end{enumerate}

\begin{obs} \label{O38} \
\begin{enumerate}
\item In view of finite dimensionality  of $\mathbb F_r(a,b)$  one has:
\begin{enumerate}
\item for any $a$ there exists $b(a)$ such that 
$$\mathbb F_r(a, b(a))= \mathbb F_r(a, b')= \mathbb F_r(a,\infty)$$
provided that  $b'\geq b(a)$
\item for any $b$ there exists $a(b))$ such that 
$$\mathbb F_r(-\infty,b)=  \mathbb F_r(a',b)= \mathbb F_r(a(b),b)$$
provided that $a'\leq a(b).$ 
\end{enumerate}
\item 
In view of item (1.), for $a'< a(b)$ and $b'>b(a),$  the canonical projections
$$ \mathbb F_r (B(a,b;\infty))    \to \mathbb F_r ((a',a]\times [b,b')) \to \mathbb F_r((a(b),a]\times [b,b(a)))$$
are isomorphisms 
\end{enumerate}
\end{obs}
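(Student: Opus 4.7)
Both items rest on two prior facts: the finite dimensionality of $\mathbb F_r(a,b)$ (Proposition \ref{P32}) and the monotonicity in Observation \ref{O31}(1), which says $\mathbb F_r(a,b)$ increases as $a$ grows and decreases as $b$ grows. My strategy is to establish (1) as a standard ``descending chain in a finite dimensional space stabilizes'' argument, and then observe that (2) is an immediate consequence because all three quotients have the same numerator and identical denominator.

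For (1)(a) I would fix $a$ and note that $\mathbb I^{b'}_f(r)$ decreases as $b'$ grows, hence so does $\mathbb F_r(a,b') = \mathbb I^f_a(r)\cap\mathbb I^{b'}_f(r)$. Pick any $b_0$; then $\{\mathbb F_r(a,b')\}_{b'\geq b_0}$ is a decreasing chain of subspaces of the finite dimensional space $\mathbb F_r(a,b_0)$, so there exists $b(a)\geq b_0$ at which the chain becomes constant. To identify the stable value, I would compute
\[
\mathbb F_r(a,\infty) \;=\; \mathbb I^f_a(r)\cap \mathbb I_f^\infty(r) \;=\; \mathbb I^f_a(r)\cap \bigcap_{b'\in\mathbb R}\mathbb I^{b'}_f(r) \;=\; \bigcap_{b'\in\mathbb R}\mathbb F_r(a,b'),
\]
and this common intersection equals the stable value $\mathbb F_r(a,b(a))$. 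The argument for (1)(b) is formally symmetric: as $a'$ decreases, $\mathbb I^f_{a'}(r)$ decreases, so $\mathbb F_r(a',b)$ decreases, and the descending chain in finite dimensions stabilizes at some $a(b)$ whose stable value is $\mathbb F_r(-\infty,b)$.

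For (2), the plan is to show that the three quotients have a common numerator $\mathbb F_r(a,b)$ and a common denominator. For $a'<a(b)$ and $b'>b(a)$, item (1) gives
\[
\mathbb F_r(a',b)=\mathbb F_r(a(b),b)=\mathbb F_r(-\infty,b),\qquad \mathbb F_r(a,b')=\mathbb F_r(a,b(a))=\mathbb F_r(a,\infty),
\]
so each of
\[
\mathbb F_r\bigl(B(a,b;\infty)\bigr),\quad \mathbb F_r\bigl((a',a]\times[b,b')\bigr),\quad \mathbb F_r\bigl((a(b),a]\times[b,b(a))\bigr)
\]
equals the quotient of $\mathbb F_r(a,b)$ by the single subspace $\mathbb F_r(-\infty,b)+\mathbb F_r(a,\infty)$. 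The canonical projections from Observation \ref{O35}(2) are then the identity on this shared quotient, hence isomorphisms.

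The only delicate point I anticipate is purely notational: one must keep the two monotonicity directions straight and check that the intersection over $b'\in\mathbb R$ can be interchanged with the intersection with $\mathbb I^f_a(r)$; but this interchange is just distributivity of intersection, and once (1) is in place the identification of denominators in (2) is mechanical. There is no conceptual obstacle beyond that.
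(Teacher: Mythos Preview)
Your proposal is correct and is exactly the argument the paper has in mind: the paper states this as an observation without a detailed proof, simply flagging that item (1) follows from the finite dimensionality of $\mathbb F_r(a,b)$ and that item (2) follows from item (1). Your descending-chain argument for (1) and your identification of the common numerator and denominator for (2) spell out precisely these implicit steps.
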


\begin{obs} \label {P39} (Addendum to Observation \ref{P24} item 3.)

 The  maps    $$\oplus_{(a',b')\in \supp (\delta^f_r) \cap B(a,b;\infty)}    i^{B(a,b;\infty)}_r(a',b'): \oplus_{(a',b')\in \supp \delta^f_r\cap B(a,b;\infty)} \ \hat \delta ^f_r (a',b') \to \mathbb F_r(B(a,b;\infty))$$
and
$$\oplus_{(a,b)\in  \supp (\delta^f_r)} \ i_r(a,b): \oplus_{(a,b)\in  \supp (\delta^f_r)}\ \hat \delta ^f_r (a,b) \to H_r(X)/ (\mathbb I^f_{-\infty}(r) + \mathbb I^\infty_f(r))$$
are isomorphisms.
\end{obs}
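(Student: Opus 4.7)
The plan is to reduce to the finite-box case of Observation~\ref{P24}(3) already established, using Observation~\ref{O38} which provides a canonical isomorphism $\mathbb F_r(B(a,b;\infty))\cong \mathbb F_r(B_{\rm fin})$ for the finite sub-box $B_{\rm fin}=(a(b),a]\times[b,b(a))$. I would first show that $\supp (\delta^f_r)\cap B(a,b;\infty)$ is in fact contained in $B_{\rm fin}$. The key sub-claim is: if $a'\le a(b)$ and $b'\ge b$ then $\mathbb F^f_r(a',b')=\mathbb F^f_r(-\infty,b')$, and symmetrically if $a'\le a$ and $b'\ge b(a)$ then $\mathbb F^f_r(a',b')=\mathbb F^f_r(a',\infty)$. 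The first identity follows from $\mathbb F_r(a',b)=\mathbb F_r(-\infty,b)$ (given by Observation~\ref{O38}) by a short chase inside $\mathbb I^f_{a'}(r)\cap \mathbb I^{b'}_f(r)$ using $\mathbb I^{b'}_f(r)\subseteq \mathbb I^b_f(r)$ and $\mathbb I^f_{-\infty}(r)\subseteq \mathbb I^f_{a'}(r)$; the second is symmetric. Either identity forces $\hat\delta^f_r(a',b')=0$, since for small $\epsilon>0$ one has $\mathbb F_r(a'-\epsilon,b')=\mathbb F_r(a',b')$ or $\mathbb F_r(a',b'+\epsilon)=\mathbb F_r(a',b')$ respectively. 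Thus the domain of the first map in Observation~\ref{P39} collapses to the finite sum $\bigoplus_{(a',b')\in \supp (\delta^f_r)\cap B_{\rm fin}}\hat\delta^f_r(a',b')$ and its target is canonically $\mathbb F_r(B_{\rm fin})$ by Observation~\ref{O38}(2); the statement now follows from Observation~\ref{P24}(3) applied to $B_{\rm fin}$.

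\textbf{Second isomorphism.} I would obtain this as a direct limit of (1). Pick sequences $a_n\nearrow \infty$ and $b_n\searrow -\infty$ and set $B_n=B(a_n,b_n;\infty)$. The inclusion $B_n\subset B_{n+1}$ (``upper-left corner'' pattern) should induce an injective linear map $\iota_n\colon \mathbb F_r(B_n)\hookrightarrow \mathbb F_r(B_{n+1})$ analogous to Observation~\ref{O35}(1); well-definedness uses monotonicity of $\mathbb F_r(-\infty,\cdot)$ and $\mathbb F_r(\cdot,\infty)$, and injectivity reduces to the modular-law identity
\[
\mathbb F_r(a_n,b_n)\cap \bigl(\mathbb F_r(-\infty,b_{n+1})+\mathbb F_r(a_{n+1},\infty)\bigr)=\mathbb F_r(-\infty,b_n)+\mathbb F_r(a_n,\infty),
\]
which, for $v=u+w$ with $u\in\mathbb F_r(-\infty,b_{n+1})$ and $w\in\mathbb F_r(a_{n+1},\infty)$, is settled by $w\in \mathbb I^\infty_f(r)\subseteq \mathbb I^{b_n}_f(r)$ giving $u=v-w\in \mathbb F_r(-\infty,b_n)$, and symmetrically $u\in \mathbb I^f_{-\infty}(r)\subseteq \mathbb I^f_{a_n}(r)$ giving $w\in \mathbb F_r(a_n,\infty)$. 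Taking the direct limit of the isomorphisms of (1) over $n$: on the source, $\bigcup_n \bigl(\supp(\delta^f_r)\cap B_n\bigr)=\supp(\delta^f_r)$; on the target, $\bigcup_n \mathbb F_r(a_n,b_n)=H_r(X)$, because singular chains have compact support and $f$ is proper, so $\mathbb I^f_{a_n}(r)\nearrow H_r(X)$ and $\mathbb I^{b_n}_f(r)\nearrow H_r(X)$, and intersections of monotone exhaustions exhaust the intersection. The same monotone argument gives $\bigcup_n(\mathbb F_r(-\infty,b_n)+\mathbb F_r(a_n,\infty))=\mathbb I^f_{-\infty}(r)+\mathbb I^\infty_f(r)$, so $\varinjlim_n \mathbb F_r(B_n)\cong H_r(X)/(\mathbb I^f_{-\infty}(r)+\mathbb I^\infty_f(r))$. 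Unwinding the definitions, the limit of the isomorphisms identifies with the map $\oplus_{(a,b)\in \supp(\delta^f_r)} i_r(a,b)$.

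\textbf{Main obstacle.} The step requiring the most care is the modular-law identity above, which is exactly what makes the exhaustion compute the quotient $H_r(X)/(\mathbb I^f_{-\infty}(r)+\mathbb I^\infty_f(r))$ rather than something larger. Once this identity (and the support-vanishing sub-claim in the first part) is in hand, the remainder is bookkeeping built on Observations~\ref{O31}, \ref{O35}, \ref{O38} and the finite-box isomorphism of Observation~\ref{P24}(3).
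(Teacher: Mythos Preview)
Your proposal is correct and follows essentially the same strategy as the paper. For the first isomorphism the paper simply says ``follows from Observations~\ref{P24} and~\ref{O38}'', and you have unpacked this by proving the support-vanishing sub-claim that forces $\supp(\delta^f_r)\cap B(a,b;\infty)\subset B_{\rm fin}$; for the second isomorphism the paper also takes the direct limit of $\mathbb F_r(B(k,-k;\infty))$ and states exactly your modular-law identity (in the form $(\mathbb I_{-\infty}\cap\mathbb I^{-k'}+\mathbb I_{k'}\cap\mathbb I^\infty)\cap\mathbb I_{k}\cap\mathbb I^{-k}=\mathbb I_{-\infty}\cap\mathbb I^{-k}+\mathbb I_k\cap\mathbb I^\infty$) without the elementwise verification that you supply.
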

\begin{proof} 
First isomorphism follows from Observations  \ref{P24} and  \ref{O38}. 
 
 For the second, note that for $k <k'$  (for simplicity in writing we drop $f$ and $r$ from notation)
 $$(\mathbb I_{-\infty} \cap \mathbb I^{-k'}  + \mathbb I_{k'}\cap \mathbb I^\infty) \cap  \mathbb I^{-k}\cap \mathbb I_k= \mathbb I_{-\infty} \cap \mathbb I^{-k}+ \mathbb I_{k}\cap \mathbb I^\infty$$
and  that  $$H_r(X)= \varinjlim_{k\to \infty} \mathbb F_r(k, -k)= \varinjlim_{k\to \infty}\mathbb I^{-k}= \varinjlim_{k\to \infty}\mathbb I_k.$$  
Then in view of stabilization properties  
$$\varinjlim \frac{ \mathbb F(k,-k)} {\mathbb I_{-\infty} \cap \mathbb I^{-k}+ \mathbb I_{k}\cap \mathbb I^\infty}= \frac{H_r(X)}{\mathbb I_{-\infty } +\mathbb I^\infty}$$
\end{proof}
\vskip .2in
Let $D(a,b;\epsilon):= (a-\epsilon, a+\epsilon ]\times [b-\epsilon, b+\epsilon).$ If $x=(a,b)$ one also writes $D(x;\epsilon)$ for 
$D(a,b;\epsilon)$ 

\begin{proposition} \label {P08} (cf \cite{BH} Proposition 5.6)

Let $f:X\to \mathbb R$ be a  tame map and $\epsilon <\epsilon(f)/3.$ For any  map $g:X\to \mathbb R$ which satisfies  $|| f- g ||_\infty <\epsilon$ and $a,b\in Cr(f)$ critical values  one has:

\begin{equation}\label{E3}
 \quad   \sum_{x\in D(a,b;2\epsilon)} \delta^g_r(x)=  \delta ^f_r(a,b), 
\end{equation}
\begin{equation} \label{E40}
 \quad \quad \supp \ \delta^{g}_r\subset \bigcup  _{(a,b)\in \supp\ \delta^{f}_r} D(a,b;2\epsilon). 
\end{equation}
If in addition $H_r(X)$ is equipped with a Hilbert space structure ($\kappa= \mathbb R$ or $\mathbb C$) the above statement can be strengthen to 

\begin{equation}\label{E5}
x\in D(a,b;2\epsilon)\Rightarrow \hat \delta^g_r(x)\subseteq \hat \delta ^f_r(a,b), \ 
\quad   \oplus_{x\in D(a,b;2\epsilon)}\hat \delta^g_r(x)= \hat \delta ^f_r(a,b) 
\end{equation}
\end{proposition}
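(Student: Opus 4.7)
The strategy is to deduce everything from the symmetric form of Observation \ref{O31}(3): since $\|f-g\|_\infty<\epsilon$, applying it in both directions yields
\[
\mathbb F^f_r(\alpha-\epsilon,\beta+\epsilon)\;\subseteq\;\mathbb F^g_r(\alpha,\beta)\;\subseteq\;\mathbb F^f_r(\alpha+\epsilon,\beta-\epsilon)
\]
for all $\alpha,\beta\in\mathbb R$. The hypothesis $\epsilon<\epsilon(f)/3$ combined with tameness ensures that, for $a,b\in Cr(f)$, the sub- and super-level images $\mathbb I^f_{a+\eta}$ and $\mathbb I^{b-\eta}_f$ are constant for $\eta\in(0,\epsilon(f))$ and equal $\mathbb I^f_a$, $\mathbb I^b_f$ respectively (by the standard semicontinuity of sub/super-levels at critical values), while $\mathbb I^f_{a-\eta}$ and $\mathbb I^{b+\eta}_f$ stabilize to possibly distinct limits. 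These are the two ingredients that drive the argument.

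To prove (\ref{E3}) I compute each of the three corner terms in the quotient definition of $\mathbb F^g_r(D(a,b;2\epsilon))$. For the numerator, the sandwich gives
\[
\mathbb F^f_r(a+\epsilon,b-\epsilon)\;\subseteq\;\mathbb F^g_r(a+2\epsilon,b-2\epsilon)\;\subseteq\;\mathbb F^f_r(a+3\epsilon,b-3\epsilon);
\]
stabilization collapses both outer terms to $\mathbb F^f_r(a,b)$, forcing the middle to equal $\mathbb F^f_r(a,b)$. Analogous sandwiches give $\mathbb F^g_r(a-2\epsilon,b-2\epsilon)=\mathbb F^f_r(a-\eta,b)$ and $\mathbb F^g_r(a+2\epsilon,b+2\epsilon)=\mathbb F^f_r(a,b+\eta)$ for any $\eta\in(0,\epsilon(f))$. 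Substituting,
\[
\mathbb F^g_r(D(a,b;2\epsilon))=\mathbb F^f_r(a,b)/\bigl(\mathbb F^f_r(a-\eta,b)+\mathbb F^f_r(a,b+\eta)\bigr)=\mathbb F^f_r(B(a,b;\eta))=\hat\delta^f_r(a,b),
\]
by Observation \ref{O}. Proposition \ref{P24}(3) identifies the dimension of the left side with $\sum_{x\in D(a,b;2\epsilon)}\delta^g_r(x)$, which yields (\ref{E3}).

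For (\ref{E40}) I apply (\ref{E3}) to every pair $(a,b)\in Cr(f)\times Cr(f)$, not only to $\supp\delta^f_r$; when $(a,b)\notin\supp\delta^f_r$ the right side of (\ref{E3}) vanishes, so no point of $\supp\delta^g_r$ lies in $D(a,b;2\epsilon)$. Given $(a',b')\in\supp\delta^g_r$, if there exist $a\in Cr(f)\cap[a'-2\epsilon,a'+2\epsilon)$ and $b\in Cr(f)\cap(b'-2\epsilon,b'+2\epsilon]$, then $(a',b')\in D(a,b;2\epsilon)$ and the previous remark forces $(a,b)\in\supp\delta^f_r$, giving the required containment. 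In the remaining case one of these intervals misses $Cr(f)$; say $[a'-2\epsilon,a'+2\epsilon)\cap Cr(f)=\emptyset$, so $\mathbb I^f_t$ is constant throughout that interval, the sandwich pinches $\mathbb I^g_{a'}$ and $\mathbb I^g_{a'-\eta'}$ between identical values of $\mathbb I^f$ for small $\eta'$, forcing $\mathbb I^g_{a'-\eta'}=\mathbb I^g_{a'}$ and hence $\mathbb F^g_r(B(a',b';\eta'))=0$, contradicting $(a',b')\in\supp\delta^g_r$. Under a Hilbert structure on $H_r(X)$, the sandwich inclusions are genuine inclusions of subspaces, so the corner identifications above are equalities of subspaces of $H_r(X)$; consequently $\mathbf H^g_r(D(a,b;2\epsilon))$, the orthogonal complement of $(\mathbb F^g)'(D(a,b;2\epsilon))$ inside $\mathbb F^g_r(a+2\epsilon,b-2\epsilon)$, coincides with $\mathbf H^f_r(B(a,b;\eta))=\hat{\hat\delta}^f_r(a,b)$, which Observation \ref{O35}(4) decomposes as $\bigoplus_{x\in D(a,b;2\epsilon)}\hat{\hat\delta}^g_r(x)$, proving (\ref{E5}). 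The main obstacle is to track the half-open geometry of $D$ and $B$ consistently with the direction of semicontinuity of the filtrations; the separate stabilization argument in the final case of (\ref{E40}) is where this delicacy matters most, since the $D$-boxes need not be disjoint under $\epsilon<\epsilon(f)/3$ and the naive cardinality count does not close.
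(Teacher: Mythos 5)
Your argument is correct and follows essentially the same route as the paper: the sandwich $\mathbb F^f_r(\alpha-\epsilon,\beta+\epsilon)\subseteq\mathbb F^g_r(\alpha,\beta)\subseteq\mathbb F^f_r(\alpha+\epsilon,\beta-\epsilon)$ from Observation \ref{O31}(3), combined with the tameness stabilization, identifies the four corners of $D(a,b;2\epsilon)$ for $g$ with those of a small box at $(a,b)$ for $f$ exactly as in the paper's Lemmas \ref{L43} and \ref{P44}, and Proposition \ref{P24} then converts this into the multiplicity count (\ref{E3}) and the orthogonal decomposition (\ref{E5}). Your treatment of (\ref{E40}) --- applying (\ref{E3}) at every pair of critical values and disposing of support points of $\delta^g_r$ far from $Cr(f)\times Cr(f)$ by a direct stabilization of $\mathbb I^g$ --- is a clean reorganization of the paper's quadrant-counting step, not a different method.
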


Proposition (\ref{P08}) implies  that in an $\epsilon-$neighborhood of a  tame map $f$ (w.r. to the $||\cdots ||_\infty$ norm) any other map  $g$ has the support of $\delta^g_r$  in a   $2\epsilon-$neighborhood of  the support of $\delta^f_r$ and in case $X$ compact is  of  cardinality counted with multiplicities equal to $\dim H_r(X).$

\vskip .1in

\noindent {\bf Proof of Proposition (\ref {P08})} (cf \cite{BH}).

\vskip .1in
 
Consider a collection of real numbers $C:=\{ \cdots c_i < c_{i+1} <c_{i+2}\cdots , i\in \mathbb Z\}$ which satisfies the following properties:

\begin{enumerate}
\item $Cr(f)\subseteq C$
\item $c_{i+1}- c_i  >\epsilon (f)$  
\item $\lim_{i\to \infty}  c_i=  \infty $ 
\item $\lim_{i\to -\infty}  c_i= -\infty.$
\end{enumerate} 
 
Next, one establishes two intermediate results, Lemmas \ref{L43} and \ref{P44} below. 

\begin{lemma} \label {L43} \
 For $f$ as in Proposition \ref {P08} and $c_i, c_j\in C$ one has: 
\begin{equation}\label {E023'}
\hat \delta^{f}_r(c_i,c_j)=  \mathbb F^f_r((c_{i-1}, c_i]\times [c_j, c_{j+1}))=  \mathbb F^f_r (c_{i}, c_{j})/ \mathbb F^f _r(c_{i-1}, c_{j}) +  \mathbb F^f_r (c_{i}, c_{j+1})
\end{equation}
and therefore 
\begin{equation}  \label {E023}
\begin{aligned}
\delta^{f}_r(c_i,c_j)=  &F^f_r((c_{i-1}, c_i]\times [c_j, c_{j+1}))= \\ 
F^f_r (c_{i-1}, c_{j+1}) + & F^f_r (c_{i}, c_{j})-  F^f_r (c_{i-1}, c_{j})-  F^f_r (c_{i}, c_{j+1}).
\end{aligned}
\end{equation}

\end{lemma}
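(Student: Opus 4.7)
The plan is to exploit tameness to stabilize the direct system defining $\hat{\delta}^f_r(c_i,c_j)$ and identify its stable value with $\mathbb{F}^f_r((c_{i-1},c_i]\times[c_j,c_{j+1}))$. The key technical input I would first establish is a \emph{constancy lemma}: for tame $f$ the subspace $\mathbb{I}^f_a(r)$ is constant as $a$ ranges over $[c_{i-1},c_i)$, and symmetrically $\mathbb{I}_f^b(r)$ is constant as $b$ ranges over $(c_j,c_{j+1}]$. The interior part of this claim follows from the definition of regular value in Section~\ref{SS22}: for any $t\in(c_{i-1},c_i)$ the inclusion $f^{-1}(t)\hookrightarrow f^{-1}(t-\eta,t+\eta)$ is a homotopy equivalence for small $\eta$, which promotes via the standard tube/retraction argument to a homotopy equivalence $X_{t-\eta}\hookrightarrow X_{t+\eta}$ of sublevel sets, hence $\mathbb{I}^f_{t-\eta}(r)=\mathbb{I}^f_{t+\eta}(r)$; a chaining argument covers the open interval. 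To reach the left endpoint $a=c_{i-1}$ I would use that $c_{i-1}$ is isolated in $Cr(f)$ (since $\epsilon(f)>0$), so the same tube argument applied immediately to the right of $c_{i-1}$ yields a homotopy equivalence $X_{c_{i-1}}\hookrightarrow X_{c_{i-1}+\eta}$ for all $\eta<\epsilon(f)$, and analogously at the right endpoint $b=c_{j+1}$ for the superlevel sets.

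Granted the constancy lemma, for any $0<\epsilon<\min(c_i-c_{i-1},c_{j+1}-c_j)$, intersecting the constant subspace $\mathbb{I}^f_a(r)$ with the fixed $\mathbb{I}_f^{c_j}(r)$ gives $\mathbb{F}^f_r(c_i-\epsilon,c_j)=\mathbb{F}^f_r(c_{i-1},c_j)$, and symmetrically $\mathbb{F}^f_r(c_i,c_j+\epsilon)=\mathbb{F}^f_r(c_i,c_{j+1})$. Consequently
$$\mathbb{F}^f_r(B(c_i,c_j;\epsilon))=\mathbb{F}^f_r(c_i,c_j)\big/\bigl(\mathbb{F}^f_r(c_i-\epsilon,c_j)+\mathbb{F}^f_r(c_i,c_j+\epsilon)\bigr)$$
is literally independent of such $\epsilon$ and equals $\mathbb{F}^f_r((c_{i-1},c_i]\times[c_j,c_{j+1}))$. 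The direct system defining $\hat{\delta}^f_r(c_i,c_j)$ therefore stabilizes to this common quotient (as already anticipated by Observation~\ref{O}), yielding the first displayed formula of the lemma.

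For the dimensional equality I would take dimensions and apply inclusion--exclusion to $\mathbb{F}^f_r(c_{i-1},c_j)+\mathbb{F}^f_r(c_i,c_{j+1})$, then invoke Observation~\ref{O31}(2), which identifies the intersection of these two subspaces with $\mathbb{F}^f_r(c_{i-1},c_{j+1})$; substituting produces the alternating sum $F^f_r(c_i,c_j)+F^f_r(c_{i-1},c_{j+1})-F^f_r(c_{i-1},c_j)-F^f_r(c_i,c_{j+1})$ as claimed. In my view the main obstacle is precisely the extension of the constancy statement across the critical endpoints $c_{i-1}$ (on the sublevel side) and $c_{j+1}$ (on the superlevel side): the open-interval part is immediate from the regularity definition, but the half-open convention of the box together with the closed-sublevel/closed-superlevel conventions require one to track carefully that each critical level is absorbed into the correct side of the quotient. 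This step is where the hypothesis $\epsilon(f)>0$ of tameness is really used, as opposed to mere weak tameness.
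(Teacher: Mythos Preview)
Your proposal is correct and follows essentially the same approach as the paper: your ``constancy lemma'' is exactly the content of the paper's equation~(\ref{E4}), which the paper simply asserts ``in view of tameness'' without spelling out the tube/retraction mechanism you describe; and your dimension computation via inclusion--exclusion together with Observation~\ref{O31}(2) is precisely the paper's final paragraph. If anything, you are more explicit than the paper about the endpoint issue at $c_{i-1}$ and $c_{j+1}$, which the paper leaves implicit.
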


\begin{proof}   
It is known, cf \cite {Hu}, that $X$ closed subset of $Y$  and $X,Y$ ANRs imply that $X$ is a neighborhood deformation retract. Then in  view of the tameness of $f$  
for any $0< \epsilon', \epsilon''< \epsilon (f)$ 
one has 
\begin{equation}\label {E4}
\begin{aligned}
\mathbb F^f_r (c_{i}, c_{j})=&  \mathbb F^f_r (c_{i}+\epsilon', c_{j})=  \mathbb F^f_r(c_{i+1}-\epsilon'', c_j)= \mathbb F^f_r(c_{i+1}-\epsilon'', c_{j-1}+\epsilon'')\\
\mathbb F^f_r (c_{i}, c_{j})= & \mathbb F^f_r (c_{i},  c_{j}-\epsilon')=  \mathbb F^f_r(c_{i}, c_{j-1}+\epsilon'')=  \mathbb F^f_r(c_{i+1}-\epsilon' , c_{j-1}+\epsilon''). 
\end{aligned}
\end{equation}
Since $\epsilon <\epsilon (f)$  in view of the definition of $\hat \delta^f_r,$
one has

\begin{equation}\label{E6}
\begin{aligned}
\hat \delta^{f}_r(c_i,c_j)= &\mathbb F^f_r((c_i-\epsilon, c_i]\times [c_j, c_j+\epsilon))= \\
&\mathbb F^f_r (c_{i}, c_{j})/ 
\mathbb F^f_r (c_{i}-\epsilon, c_{j}) + 
\mathbb F^f_r (c_{i}, c_{j}+\epsilon). 
\end{aligned}
\end{equation}
Combining (\ref {E6}) with (\ref{E4}) one obtains  
the equality (\ref {E023'})
\begin{equation*}
\delta^{f}_r(c_i,c_j)= 
\mathbb F^f_r (c_{i}, c_{j})/  \mathbb F^f_r (c_{i-1}, c_{j}) + \mathbb F^f_r (c_{i}, c_{j+1}).
\end{equation*}
Since $\mathbb F^f(c_{i-1}, c_j ) \cap  \mathbb F^f(c_i, c_{j+1})=   \mathbb F^f(c_{i-1}, c_{j+1})$
one has 

$\dim (\mathbb F^f_r (c_{i-1}, c_{j}) + \mathbb F^f_r (c_{i}, c_{j+1}))= \dim \mathbb F^f_r (c_{i-1}, c_{j}) + \dim \mathbb F^f_r (c_{i}, c_{j+1}- \dim \mathbb F^f (c_{i-1}, c_{j+1})$  and  the equality 
(\ref {E023}) follows.

\end{proof}

To  simplify the  notation the index $r$ in  the following Lemma  will be dropped off.

\begin{lemma}\label{P44}\
 
 Suppose $f$ is tame.  
Let $a= c_i, b= c_j,$  $c_i,c_j \in C$   and  $\epsilon <\epsilon(f)/3.$   If $g$ is a continuous map with $|| f- g ||_\infty <\epsilon$ then 

\begin{equation}\label {E8'}
\begin{aligned}
  \mathbb F^g_r(a-2\epsilon, b+2\epsilon) = &\mathbb F^f_r(c_{i-1}, c_{j+1})\\
 \mathbb F^g_r(a+2\epsilon, b-2\epsilon)= &\mathbb F^f_r(c_i,c_j)\\
 \mathbb F^g_r(a+2\epsilon, b+2\epsilon) = &\mathbb F^f_r(c_{i},c_{j+1})\\
 \mathbb F^g_r(a-2\epsilon, b-2\epsilon)= &\mathbb F^f_r(c_{i-1}, c_{j}).
\end{aligned}
\end{equation}
\end{lemma}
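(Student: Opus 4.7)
The plan is to sandwich each space $\mathbb F^g(\alpha,\beta)$ on the left-hand side of \eqref{E8'} between two spaces $\mathbb F^f(\alpha',\beta')$ that both collapse, thanks to tameness of $f$, to the asserted right-hand side.

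The first ingredient is Observation~\ref{O31} item~3, applied first with the pair $(f,g)$ and then with the roles of $f$ and $g$ reversed (legitimate because $\|f-g\|_\infty<\epsilon$ is symmetric). This yields, for every $\alpha,\beta\in\mathbb R$, the two-sided chain
\begin{equation*}
\mathbb F^f(\alpha-\epsilon,\beta+\epsilon)\subseteq\mathbb F^g(\alpha,\beta)\subseteq\mathbb F^f(\alpha+\epsilon,\beta-\epsilon).
\end{equation*}
Specializing $(\alpha,\beta)$ to $(a-2\epsilon,b+2\epsilon)$, $(a+2\epsilon,b-2\epsilon)$, $(a+2\epsilon,b+2\epsilon)$, $(a-2\epsilon,b-2\epsilon)$ produces four chains, one per line of \eqref{E8'}. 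For instance the first specialization gives $\mathbb F^f(a-3\epsilon,b+3\epsilon)\subseteq\mathbb F^g(a-2\epsilon,b+2\epsilon)\subseteq\mathbb F^f(a-\epsilon,b+\epsilon)$.

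The second ingredient is tameness of $f$ together with the hypothesis $\epsilon<\epsilon(f)/3$. Since $3\epsilon<\epsilon(f)\leq\min(c_i-c_{i-1},\,c_{i+1}-c_i)$, each of the values $a-\epsilon,a-2\epsilon,a-3\epsilon$ lies in the open interval $(c_{i-1},c_i)$ and each of $a+\epsilon,a+2\epsilon,a+3\epsilon$ lies in $(c_i,c_{i+1})$; analogously for the $b$-side. Because $f$ has no critical values strictly between consecutive $c_k$, the map $t\mapsto\mathbb I^f_t$ is constant on $[c_k,c_{k+1})$ with value $\mathbb I^f_{c_k}$, while $t\mapsto\mathbb I_f^t$ is constant on $(c_{k-1},c_k]$ with value $\mathbb I_f^{c_k}$. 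Intersecting the two images as in the definition of $\mathbb F^f$, I conclude that in each of the four chains the lower and upper flanking $\mathbb F^f$-spaces are in fact equal: both flanks of line~1 equal $\mathbb I^f_{c_{i-1}}\cap\mathbb I_f^{c_{j+1}}=\mathbb F^f(c_{i-1},c_{j+1})$, both flanks of line~2 equal $\mathbb F^f(c_i,c_j)$, and similarly for lines~3 and~4.

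Combining the two ingredients forces each sandwich to collapse to the claimed equality. The only thing to watch—really just bookkeeping—is the one-sided jump behaviour: $\mathbb I^f_t$ picks up its jump from the right (so $a-\epsilon,a-2\epsilon,a-3\epsilon$, all sitting just below $c_i=a$, contribute $\mathbb I^f_{c_{i-1}}$, not $\mathbb I^f_{c_i}$), whereas $\mathbb I_f^t$ picks up its jump from the left (so $b+\epsilon,b+2\epsilon,b+3\epsilon$, just above $c_j=b$, contribute $\mathbb I_f^{c_{j+1}}$). Once these conventions are fixed the four computations are essentially identical, and no further ideas are needed.
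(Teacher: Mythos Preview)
Your proof is correct and follows essentially the same approach as the paper: sandwich each $\mathbb F^g$ between two $\mathbb F^f$-spaces via Observation~\ref{O31} item~3 (applied in both directions), then use tameness and $3\epsilon<\epsilon(f)$ to collapse each sandwich and identify the common value with the appropriate $\mathbb F^f(c_{i'},c_{j'})$. Your explicit discussion of the one-sided jump behaviour of $\mathbb I^f_t$ and $\mathbb I_f^t$ is exactly the content the paper packages into equalities~(\ref{E4}) and~(\ref{E10}).
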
 

\begin{proof}
Since $||f-g||_\infty<\epsilon,$ in view of Observation \ref{O31} item 3.  one has  

\begin{equation}\label {E9'}
\begin{aligned}
 \mathbb F^f_r(a-3\epsilon, b+3\epsilon)\subseteq& \ \mathbb F^g_r(a-2\epsilon, b+2\epsilon) \subseteq \mathbb F^f_r(a-\epsilon, b+\epsilon),\\
 \mathbb F^f_r(a+\epsilon, b-\epsilon) \subseteq& \mathbb F^g_r(a+2\epsilon, b-2\epsilon) \subseteq \mathbb F^f_r(a+3\epsilon, b-3\epsilon),\\
 \mathbb F^f_r(a+\epsilon, b+3\epsilon) \subseteq& \mathbb F^g_r(a+2\epsilon, b+2\epsilon) \subseteq \mathbb F^f_r(a+3\epsilon, b+\epsilon),\\
 \mathbb F^f_r(a-3\epsilon, b-\epsilon) \subseteq& \mathbb F^g_r(a-2\epsilon, b-2\epsilon) \subseteq \mathbb F^f_r(a-\epsilon, b-3\epsilon).
\end{aligned}
\end{equation}
\vskip .1in

Since $3\epsilon<\epsilon(f)$ one has 

\begin{equation}\label {E9''}
\begin{aligned}
 \mathbb F^f(a-3\epsilon, b+3\epsilon)&=
 \mathbb F^f(a-\epsilon, b+\epsilon) ,\\
 \mathbb F^f(a+\epsilon, b-\epsilon) &=
  \mathbb F^f(a+3\epsilon, b-3\epsilon),\\
 \mathbb F^f(a+\epsilon, b+3\epsilon) &=
  \mathbb F^f(a+3\epsilon, b+\epsilon),\\
 \mathbb F^f(a-3\epsilon, b-\epsilon) &= \mathbb F^f(a-\epsilon, b-3\epsilon).
\end{aligned}
\end{equation}

which imply that in the  equation (\ref{E9'})  the inclusion  "$\subseteq $" is actually the equality ''$=$" .

Note that in view equalities (\ref{E4}) and for  $\epsilon', \epsilon'' <\epsilon(f)$ one has  
 
\begin{equation}\label{E10}
\begin{aligned}
\mathbb F^f(c_{i-1}, c_{j+1})=& \mathbb F^f(a-\epsilon', b+\epsilon'')\\
\mathbb F^f(c_i, c_j)=& \mathbb F^f(a+\epsilon', b-\epsilon'')\\
\mathbb F^f(c_i, c_{j+1})=& \mathbb F^f(a+\epsilon', b+\epsilon'')\\
\mathbb F^f(c_{i-1}, c_j)=&\mathbb F^f(a-\epsilon', b-\epsilon'').
\end{aligned}
\end{equation}

Then (\ref{E9'}) and  (\ref{E10}) imply the equalities (\ref{E8'}) hence the statement of Lemma \ref{P44}.

\end{proof}
\vskip .1in
Next observe that 
 Lemma (\ref {P44}) gives (for $a= c_i,$  $b=c_j$with  $c_i, c_j \in C$) the equality 

$\mathbb F^g((a-2\epsilon, a+2\epsilon]\times [b-2\epsilon, b+2\epsilon))= \mathbb F^f((c_{i-1}, c_i]\times [c_j, c_{j+1})).$ 

This combined with  Lemma  \ref {L43}  implies     
$\mathbb F^g((a-2\epsilon, a+2\epsilon]\times [b-2\epsilon, b+2\epsilon))= \hat \delta^f(a,b),$
which combined with Proposition (\ref {P24}) implies the inclusion (\ref{E3}) and the equality (\ref{E5})  and this not only for critical values but for any $a,b\in C.$

\medskip 

To check inclusion (\ref{E40})  observe  that:
\begin{enumerate}
\item 
 $||f-g||_\infty <\epsilon $ implies 
$X^f_a \subset X^g_{a+\epsilon} \subset X^f_{a+2\epsilon}$ and $X_f^b \subset X_g^{b-\epsilon} \subset X_f^{b-2\epsilon} $
 
and when $a,b \in C$ 
 \begin{equation} \label {E13}
\mathbb F^f(a,b)\subseteq \mathbb F^g(a+\epsilon, b-\epsilon)\subseteq \mathbb F^f(a+2\epsilon, b-2\epsilon).\end{equation}

\item When $\epsilon <\epsilon(f)/3$  inclusions (\ref{E13}) imply 
 \begin{equation*} \label {E14}
 \mathbb F^f(a,b)= \mathbb F^g(a+\epsilon, b-\epsilon)= \mathbb F^f(a+2\epsilon, b-2\epsilon)
 \end{equation*} 
  which in view of  Observation \ref{P39} 
  implies  

\begin{equation}\label{E16}
\begin{aligned}
\sum_{x\in (-\infty,a]\times (b,\infty) \cap \supp \delta^f_r}  \delta^f_r(x) = &\sum_{y\in (-\infty,a+ \epsilon]\times (b-\epsilon,\infty)\cap \supp \delta^g_r}\delta^g_r(y)=\\
&\sum_{x\in (-\infty,a+2\epsilon]\times (b-2\epsilon,\infty)\cap \supp \delta^f_r}  \delta^f_r(x)
   \end{aligned}
   \end{equation}
\end{enumerate}

Since $\mathbb R^2= \cup_{i\in \mathbb Z}  B(c_i, c_{-i};\infty)$
the equalities (\ref{E16}) and equality (\ref {E3}) rule out the existence of  $x\in \supp (\delta^g_r)$
away from $ \cup_{x\in \supp (\delta^f_r)} D(x;  2\epsilon).$
which 
finishes the proof of Proposition \ref {P08}.

\vskip .2in 
 
Let $K$ be a compact ANR and $f:X\to \mathbb R$ be a map. Denote by 
$$\overline f_K; X\times K\to \mathbb R$$  the composition $f\cdot \pi_K$ with $\pi_K: X\times K\to X$ the first factor projection.
If $f$ is weakly tame then so is  $\overline f_K$ and the set of critical values of $f$ and of $\overline f_K$  are the same. 
Moreover in view of the K\"unneth theorem about the homology of the cartesian product  of two spaces  one has:
\begin{obs}\label {O313}\ 
\begin{enumerate}
\item $\mathbb F^{\overline f_K}_r(a,b)= \oplus_{0\leq k\leq r} \mathbb F^f_k(a,b)\otimes H_{r-k}(K)$ and therefore
\item $\hat \delta ^{\overline f_K}_r(a,b)= \oplus_{0\leq k\leq r} \hat \delta^f_k(a,b)\otimes H_{r-k}(K),$ and when $K$ is acyclic
\item $\hat \delta ^{\overline f_K}_r(a,b)= \hat \delta^f_k(a,b).$ 
\end{enumerate} 
\end{obs}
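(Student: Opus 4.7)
The plan is to pull everything back to an application of the K\"unneth formula for homology with field coefficients, and then track how the formula interacts with sublevel and superlevel inclusions. The core point is that, over a field $\kappa$, the K\"unneth isomorphism
\[
H_r(Y\times K)\;\cong\;\bigoplus_{0\le k\le r} H_k(Y)\otimes H_{r-k}(K)
\]
is natural in $Y$, so that for any map $Y'\hookrightarrow Y$ it identifies the image on the left with the direct sum of the images on the right, coordinate by coordinate.

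First, I would observe that the sublevels and superlevels of $\overline f_K$ are product spaces,
\[
(X\times K)_a=X_a\times K,\qquad (X\times K)^b=X^b\times K,
\]
and that the inclusions $X_a\times K\hookrightarrow X\times K$ and $X^b\times K\hookrightarrow X\times K$ factor as $(X_a\hookrightarrow X)\times\mathrm{id}_K$ and $(X^b\hookrightarrow X)\times\mathrm{id}_K$. Applying K\"unneth and using its naturality, I would deduce
\[
\mathbb I^{\overline f_K}_a(r)=\bigoplus_{0\le k\le r}\mathbb I^f_a(k)\otimes H_{r-k}(K),\qquad
\mathbb I_{\overline f_K}^b(r)=\bigoplus_{0\le k\le r}\mathbb I_f^b(k)\otimes H_{r-k}(K),
\]
inside the K\"unneth decomposition of $H_r(X\times K)$.

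Next I would compute the intersection. The key elementary fact is that if $U_1,U_2\subseteq V$ are subspaces and $W$ is another vector space, then inside $V\otimes W$ one has
\[
(U_1\otimes W)\cap(U_2\otimes W)=(U_1\cap U_2)\otimes W,
\]
and if $A=\bigoplus_k V_k$ and the two subspaces decompose compatibly as direct sums along the $k$-index, the intersection also decomposes as a direct sum. Applying this termwise in the K\"unneth decomposition yields item 1:
\[
\mathbb F^{\overline f_K}_r(a,b)=\bigoplus_{0\le k\le r}\mathbb F^f_k(a,b)\otimes H_{r-k}(K).
\]
The same reasoning applied to the sum $\mathbb F^{\overline f_K}_r(a-\varepsilon,b)+\mathbb F^{\overline f_K}_r(a,b+\varepsilon)$, together with the elementary identity $(U_1\otimes W)+(U_2\otimes W)=(U_1+U_2)\otimes W$ and the exactness of tensoring with $H_{r-k}(K)$ over a field, gives item 2 after passing to the direct limit as $\varepsilon\to 0$, because Observation \ref{O} ensures the direct systems stabilize and the tensor product commutes with the resulting finite-dimensional quotients. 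Finally, item 3 is immediate: if $K$ is $\kappa$-acyclic then $H_0(K)=\kappa$ and $H_j(K)=0$ for $j>0$, so only the summand $k=r$ survives in item 2.

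The main conceptual point, and the only real obstacle, is verifying the intersection and sum identities at the level of subspaces of a tensor product; once that is in place, the rest is bookkeeping with the K\"unneth isomorphism and its naturality. There is no subtlety concerning whether $\overline f_K$ is weakly tame or whether finite-dimensionality of $\mathbb F^{\overline f_K}_r(a,b)$ holds, since these follow from the corresponding properties for $f$ (by Proposition \ref{P32} applied to $\overline f_K$, which is proper because $K$ is compact) together with compactness of $K$ and finite-dimensionality of $H_*(K)$.
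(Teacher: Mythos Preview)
Your proof is correct and follows exactly the approach the paper indicates: the paper offers no argument beyond the phrase ``in view of the K\"unneth theorem about the homology of the cartesian product of two spaces,'' and you have simply filled in the details of how naturality of K\"unneth, together with the elementary identities $(U_1\otimes W)\cap(U_2\otimes W)=(U_1\cap U_2)\otimes W$ and $(U_1\otimes W)+(U_2\otimes W)=(U_1+U_2)\otimes W$, yields the decomposition of $\mathbb F^{\overline f_K}_r$ and then of $\hat\delta^{\overline f_K}_r$. There is nothing more in the paper's treatment to compare against.
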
 
\vskip .1in
Note  that the embedding $ I: C(X;\mathbb R)\to C(X\times K;\mathbb R)$ defined by $I(f)= \overline f_K$ is an isometry when both spaces are equipped with the distance $||\cdots ||_\infty.$    Note also that when $K$ is acyclic  one has $\delta_r^f= \delta_r^{I(f)}$ and $\hat \delta_r^f= \hat \delta_r^{I(f)}$ provided that $H_r(X)$ is identified  with $H_r(X\times K).$

\section{The main results} \label {R}

\begin{theorem} (Topological results) \label {T1}\ 
Suppose $X$ is compact and $f:X\to \mathbb R$ a map\footnote {this means $X$ also ANR and $f$ continuous}. 
Then
\begin{enumerate}

\item 
$\delta_r^f(x)\ne 0$ with $x=(a,b)$ implies that both $a,b \in CR(f).$  
\item $\sum_{x\in \mathbb R^2} \delta^f_r(x)=\dim H_r(X)$  and \ $\bigoplus_{x\in \mathbb R^2} \hat \delta^f_r(x)=H_r(X).$ In particular 
$\delta^f_r\in \mathcal C_{\dim H_r(X)}(\mathbb R^2),$
\item if $H_r(X)$ is equipped with a Hilbert space structure then  
$\hat{\hat \delta}^f\in \mathcal C^O_{H_r(X)}(\mathbb R^2),$     
\item If $X$ is homeomorphic to a finite simplicial complex or a compact Hilbert cube manifold  then for an open and dense set of maps $f$  in the space of continuous maps with compact open topology $\delta^f_r(x)= 0$ or $1.$
\end{enumerate}
\end{theorem}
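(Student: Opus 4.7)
Items 1--3 follow readily from Section~3. For item 1, Observation~\ref{O} asserts directly that $\delta^f_r(a,b)\neq 0$ forces $a,b\in CR(f)$. For item 2, since $f(X)$ is bounded when $X$ is compact, pick $m<\inf f$ and $M>\sup f$ and apply item 4 of Observation~\ref{P24} directly, obtaining $H_r(X)=\mathbb F^f_r((m,M]\times[m,M))=\bigoplus_x\hat\delta^f_r(x)$; item 1 guarantees $\supp\ \delta^f_r\subseteq(m,M]\times[m,M)$, so the sum is indexed exactly by $\mathbb R^2$. For item 3, the canonical special splitting induced by the Hilbert structure yields subspaces ${\bf H}_r(a,b)\subseteq H_r(X)$ whose dimensions match $\delta^f_r$; pairwise orthogonality between ${\bf H}_r(a,b)$ and ${\bf H}_r(a',b')$ for distinct points is obtained by enclosing them in a box that decomposes as in Figure~2 or Figure~3, invoking the orthogonality clause of Proposition~\ref{P1} or~\ref{P2}, and iterating. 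Combined with item 2, this places $\hat{\hat\delta}^f_r$ in $\mathcal C^O_{H_r(X)}(\mathbb R^2)$.

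The substantive content is item 4, which I split into openness and density of $\mathcal G:=\{f:\delta^f_r(x)\leq 1\text{ for all }x\}$. Openness is a direct consequence of the stability Proposition~\ref{P08}: for tame $f\in\mathcal G$, pick $\epsilon$ smaller than both $\epsilon(f)/3$ and a quarter of the minimum pairwise distance in $\supp\ \delta^f_r$; then any $g$ with $||f-g||_\infty<\epsilon$ has $\delta^g_r$ supported in disjoint $2\epsilon$-disks around the points of $\supp\ \delta^f_r$, each carrying total $\delta^g_r$-mass exactly $1$, so $g\in\mathcal G$.

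For density, first handle the case when $X$ is a finite simplicial complex $K$. By simplicial approximation followed by a generic perturbation of vertex values, simplicial maps $g$ on $K$ with pairwise distinct vertex values are dense in $C(K;\mathbb R)$. For such a $g$, order the vertices by increasing value $c_1<\cdots<c_N$ and use Lemma~\ref{L43} to write $\delta^g_r(c_i,c_j)$ as the alternating sum $F^g_r(c_{i-1},c_{j+1})+F^g_r(c_i,c_j)-F^g_r(c_{i-1},c_j)-F^g_r(c_i,c_{j+1})$. Crossing a single vertex $v_i$ changes $K_a$ by attaching the lower star of $v_i$, a CW cofibration with prescribed cell dimensions, and similarly for $K^b$; a chain-level Mayer--Vietoris computation for $K=K_a\cup K^b$ at the four corners of the elementary box collapses the alternating sum to $0$ or $1$. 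For a compact Hilbert cube manifold $X$, Theorem~\ref{T23} gives $X\cong K\times Q$ with $K$ a finite simplicial complex. The embedding $I:C(K;\mathbb R)\to C(X;\mathbb R)$, $g\mapsto\overline g_Q$, is an isometry, and since $Q$ is acyclic Observation~\ref{O313} gives $\delta^{I(g)}_r=\delta^g_r$ under the K\"unneth identification $H_r(X)\cong H_r(K)$. Hence generic simplicial maps on $K$ produce elements of $\mathcal G$ on $X$, and Proposition~\ref{PA} ensures that every continuous map on $X$ is uniformly close to some such $\overline{g}_Q$ post-composed with a homeomorphism, which preserves the configuration.

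The main obstacle is the multiplicity-at-most-one bound for generic simplicial maps; it is plausible from the CW-attachment picture but requires a careful chain-level check at the four corners of the elementary box. This computation was carried out in~\cite{BH} for the simplicial case; our contribution is to transport it to compact Hilbert cube manifolds through the K\"unneth-acyclic reduction $X\cong K\times Q$ and the isometry $I$.
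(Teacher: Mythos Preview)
Your treatment of items 1--3 matches the paper's: these are immediate from Observation~\ref{O} and Observation~\ref{P24}. Your openness argument for item~4 via Proposition~\ref{P08} is a welcome addition (the paper's proof addresses only density), though strictly speaking you show openness only around \emph{tame} $f\in\mathcal G$; combined with density of tame maps in $\mathcal G$ this still yields an open dense subset, which is what the theorem asserts.

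The density argument, however, contains a genuine gap. A simplicial map with pairwise distinct vertex values is \emph{not} generically in $\mathcal G$. Crossing a vertex $v$ attaches the lower star of $v$, which is a cone over the lower link; the lower link can have arbitrary homology, so the cokernel of $H_r(K_{c_{i-1}})\to H_r(K_{c_i})$ need not have dimension $\leq 1$, and no Mayer--Vietoris bookkeeping at the four corners will force the alternating sum in Lemma~\ref{L43} to be $\leq 1$. Concretely, let $K$ be the suspension of $n\geq 3$ points, with $f$ assigning the smallest value to the bottom cone point, the largest to the top cone point, and distinct intermediate values to the $n$ equatorial vertices. Then $\mathbb I^f_a(1)$ jumps from $0$ to $\kappa^{n-1}$ only at the top value and $\mathbb I_f^b(1)$ drops from $\kappa^{n-1}$ to $0$ only at the bottom value, so $\delta^f_1$ is supported at a single point with multiplicity $n-1\geq 2$.

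The paper avoids this by working with Morse-type functions rather than generic linear ones: for a smooth compact manifold (possibly with boundary) one takes a Morse function with distinct critical values, so that each sublevel passage attaches a single cell and the cokernel bound is automatic; the simplicial case is then handled either via Forman's discrete Morse theory or by thickening $K$ to a manifold with boundary. The Hilbert cube reduction then proceeds exactly as you describe, via $X\cong K\times Q$ and Observation~\ref{O313}. So the architecture of your argument for item~4 is right, but the class of ``generic'' maps you feed into it is too large; you need a single-cell-attachment condition, not merely distinct vertex values.
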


The statements 1. and 3.  formulated in terms of barcodes  cf. \cite{BD11},  were verified first in \cite{BH} under the hypothesis of "$f$ a tame map" .

\vskip .1in 
\begin{theorem} (Stability)\label{T2}
Suppose $X$ is a 
compact ANR. 

1. The assignment $ f 
\rightsquigarrow \delta^f_r$ provides a continuous  map from the space of real valued maps $C(X;\mathbb R)$ equipped with the compact open topology to 
the space of configurations $\mathcal C_{b_r}(\mathbb R^2)= \mathbb C^{b_r},\ b_r=\dim H_r(X),$ equipped with the collision topology (also regarded as the space of monic polynomials of degree $b_r$). Moreover, with respect to the canonical metric $\underline D$ on the space of configurations, which induces the collision topology,  one has $$\underline D (\delta^f , \delta^g) < 2 D(f,g).$$  Recall that  $D(f,g):= || f-g||_\infty= sup _{x\in X} |f(x)- g(x)|.$

2. If $\kappa= \mathbb R$ or $\mathbb C$ then the assignment $f\rightsquigarrow \hat{\hat \delta}^f_r$ is continuous w.r. to both collision topologies.
(the continuity w.r. to the first implies with the second).
\end{theorem}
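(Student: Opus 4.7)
The plan is to combine Proposition~\ref{P08} with the density of tame maps. Since $X$ is a compact ANR, $X \times Q$ is a compact Hilbert cube manifold by Theorem~\ref{T23}(1) and hence a good ANR by Proposition~\ref{PA}, so tame maps are dense in $C(X \times Q; \mathbb R)$. The embedding $I : C(X; \mathbb R) \to C(X \times Q; \mathbb R)$, $f \mapsto \overline f$, is an isometry for $\|\cdot\|_\infty$, and Observation~\ref{O313}, applied with $K = Q$ contractible (hence acyclic), identifies $\delta^f_r$ with $\delta^{\overline f}_r$ and $\hat{\hat\delta}^f_r$ with $\hat{\hat\delta}^{\overline f}_r$ under the canonical isomorphism $H_r(X) \cong H_r(X \times Q)$. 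So I may replace $X$ by $X \times Q$ and assume from the start that tame maps are dense in $C(X;\mathbb R)$.

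For Part 1, I prove $\underline D(\delta^f, \delta^g) \leq 2 D(f, g)$ by a local Lipschitz bound plus chaining; the strict inequality of the theorem then follows because the sup-diameter of the half-open box $D(a,b;2\epsilon)$ is bounded by $2\epsilon$ while the hypothesis $\epsilon < \epsilon(\tilde f)/3$ in Proposition~\ref{P08} is strict. Concretely: if $\tilde f$ is tame and $h$ is any continuous map with $D(\tilde f, h) < \epsilon(\tilde f)/3$, then equations (\ref{E3}) and (\ref{E40}) of Proposition~\ref{P08} produce a matching between $\supp \delta^{\tilde f}_r$ and $\supp \delta^h_r$ with bottleneck cost at most $2 D(\tilde f, h)$ in the sup metric on $\mathbb R^2$. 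Given arbitrary $f, g \in C(X; \mathbb R)$ and $\eta > 0$, I subdivide the straight-line segment into $N$ substeps $f_i = (1 - i/N) f + (i/N) g$ and approximate each $f_i$ by a tame $\tilde f_i$ with $D(f_i, \tilde f_i) < \eta/(8N)$ and with critical-value separation $\epsilon(\tilde f_i) > 3\bigl(D(f,g)/N + \eta/(4N)\bigr)$, which is possible by slightly spreading the critical values of a p.l.\ approximation. Proposition~\ref{P08} then applies to each consecutive pair $\tilde f_i, \tilde f_{i+1}$; summing the local bounds via the triangle inequality for $\underline D$ gives $\underline D(\delta^f, \delta^g) \leq 2 D(f, g) + \eta$, and letting $\eta \to 0$ yields the claim and continuity.

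For Part 2, the same scheme applies using the stronger conclusion (\ref{E5}) of Proposition~\ref{P08}: under the Hilbert hypothesis, the orthogonal direct sum $\bigoplus_{x \in D(a,b;2\epsilon)} \hat{\hat\delta}^h_r(x)$ coincides with the fixed subspace $\hat{\hat\delta}^{\tilde f}_r(a,b) \subseteq H_r(X)$. This equality is precisely the condition defining a fundamental neighborhood of $\hat{\hat\delta}^{\tilde f}_r$ in $\mathcal C^O_{H_r(X)}(\mathbb R^2)$: the sum of the mutually orthogonal subspaces assigned to the points of $\supp \delta^h_r$ clustering near $(a,b)$ lands exactly at $\hat{\hat\delta}^{\tilde f}_r(a,b)$ in the Grassmannian, hence trivially inside any prescribed open neighborhood of itself. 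Running the chain-of-tame-approximations argument as in Part~1 then yields the continuity of $f \mapsto \hat{\hat\delta}^f_r$.

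The main obstacle will be the chain step: one must verify that the interpolants $\tilde f_i$ can be chosen simultaneously close to the straight-line path and with $\epsilon(\tilde f_i)$ large enough for Proposition~\ref{P08} to apply to each consecutive pair. This reduces to the flexibility of infinitesimally perturbing a p.l.\ approximation to move its critical values into prescribed positions (distinct and separated by a given amount), combined with the density of p.l.\ tame maps in $C(X;\mathbb R)$ supplied by Proposition~\ref{PA}.
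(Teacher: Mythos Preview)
Your overall architecture matches the paper's: reduce to a space where tame maps are dense via the Hilbert cube trick, use Proposition~\ref{P08} as the local Lipschitz estimate, and chain along the straight-line homotopy. The reduction $X\rightsquigarrow X\times Q$ and the use of (\ref{E5}) for Part~2 are exactly what the paper does.

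The gap is in the chaining step, and you have correctly flagged it yourself. You propose to approximate each interpolant $f_i$ separately by a tame $\tilde f_i$ satisfying simultaneously $D(f_i,\tilde f_i)<\eta/(8N)$ and $\epsilon(\tilde f_i)>3\bigl(D(f,g)/N+\eta/(4N)\bigr)$. These two constraints compete: the perturbation budget is of order $1/N$, while the required critical-value separation is also of order $1/N$ with a larger constant. If some $f_i$ is highly oscillatory (think of an interpolant like $x\sin(1/x)$ on an interval in $X$), any tame approximation within $\alpha$ is forced to retain on the order of $1/\alpha$ critical values crowded in a bounded range, and ``spreading'' them by at most $\alpha$ cannot produce separation larger than a fixed multiple of $\alpha$. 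One then cannot make $\eta\to 0$, so the sharp constant $2$ is lost (and without further control one does not even get a uniform Lipschitz bound).

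The paper avoids this by never approximating the interpolants individually. It first proves the inequality on a simplicial complex $K$: for $f,g$ p.l.\ there is a \emph{common} subdivision making both simplicial, hence every $h_t=tf+(1-t)g$ is simplicial on that same subdivision, so automatically tame with $\epsilon(h_t)>0$. Compactness of $[0,1]$ then gives a finite partition $0=t_0<\cdots<t_{2N}=1$ with $D(h_{t_j},h_{t_{j+1}})<\epsilon(h_{t_j})/3$, and the exact additivity $D(f,g)=\sum_j D(h_{t_j},h_{t_{j+1}})$ together with Proposition~\ref{P08} yields $\underline D(\delta^f,\delta^g)\le 2D(f,g)$ on the dense set of p.l.\ maps, whence on all of $C(K;\mathbb R)$ by completeness. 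The passage to $K\times Q$ uses the $(\infty-k)$-p.l.\ maps, which are again closed under convex combinations for a common $k$, and finally to an arbitrary compact ANR via $I(f)=\overline f_Q$. The fix for your argument is therefore to replace ``approximate each $f_i$'' by ``approximate the endpoints $f,g$ by p.l.\ (or $(\infty-k)$-p.l.) maps and use that this class is convex''.
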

Theorem \ref{T2} 1.was first established in \cite{BH} under the hypothesis $X$ homeomorphic to a finite simplicial complex 
is given in section (\ref{S7}).

\vskip .1in

\begin{theorem}  (Poincar\'e Duality) \label{T3}\

1. Suppose $X$ is a closed smooth manifold \footnote {the result remain probably true  as stated for topological manifolds  based essentially on the same arguments but being unable to find appropriate references we formulate them under the hypothesis of smoothness} of dimension $n$ which is $\kappa-$orientable and $f$ a continuous map.
Then $\delta^f_r(a,b)= \delta^{f}_{n-r}(b,a).$ 

2. In addition any collection of isomorphisms $H_r(X)\to H_r(X)^\ast$  induce the isomorphisms of the configuration $\hat \delta^f_r$ and $\hat\delta^{f}_{n-r}\cdot \tau$ with $\tau (a,b)= (b,a).$
\end{theorem} 
 
Item 1. of the above theorem was established in \cite{BH} for $f$ a tame map.

\subsection{Proof of Theorem   \ref{T1}} 

Items 1. and 2.  and 3. are contained in Observation \ref{P24} and Observation \ref{O}. 

Item 4.  In view of Theorem \ref{T2} whose proof does not involve Theorem \ref {T1} it suffices to establish only the density  in the space of all continuous functions of tame maps $f$ with $\delta^f_r$ taking values only $0$ and $1.$ 

We say that a tame map  $f:X\to \mathbb R$  satisfies  property $G$ if the 
following holds;

{\bf Property G:}
There exists a finite sequence of real numbers  $a= a_0<a_1 <\cdots a_n<a_{n+1}=b$ such that:
\begin{enumerate}\label{E111}
\item $\mathbb I^f_a(r)= 0, \ \mathbb I^f_b(r)= H_r(X),$
\item For any $i\geq 1$ $\dim (\mathbb I^f_{a_i} / \mathbb I^f_{a_{i-1}}) \leq 1.$ 
\end{enumerate}
The verification of item 4 is based on the  observations (\ref {O44}) and (\ref {O45}).
\begin{obs} \label {O44} For any tame map $f$ which satisfies property $G$ the configuration $\delta^f_r$ takes only the values $0$ and $1.$
\end{obs}
If  $f$ has Property $G$ then it satisfies $\dim (\mathbb I^f_{a_i} / \mathbb I^f_{a_{i-1}}) \leq 1$ for $a_i= c_i, i=1, \cdots, n;$   since for  $\alpha < \beta$ with no critical value in the open interval $(\alpha, \beta)$ and $\beta$  a regular value  the inclusion $X^f_\alpha \subset X^f_\beta$ induces isomorphism in homology and for any $a'\leq a\leq b\leq b'$ 
$\dim(\mathbb I^f_b(r) / \mathbb I^f_a(r)) \leq \dim(\mathbb I^f_{b'}(r) / \mathbb I^f_{a'}(r)).$   

If so, then for any two consecutive critical values $c_{i-1}<c_i$ and any other critical value $c_j $ the inclusion $\mathbb F_r(c_{i-1}, c_j)\subseteq \mathbb F_r(c_i, c_j)$ has cokernel of dimension at most one which by (\ref {E023'}) in Lemma \ref {L43} implies that $\delta^f_r$ takes only the 
values $0$ and $1.$
Based on this observation, if 
$X$ is a compact smooth manifold (possibly with boundary), any Morse function $f:X\to \mathbb R$ which takes different values of different critical points has  property $G.$ 

Indeed if $\{\cdots c_i < c_{i+1} <\cdots \}$ is the   collection  of all critical values, $X^f_{c_{i+1}}$ is  homotopy equivalent to a space obtained from $X^f_{c_i}$ by adding a closed disk $D^k$ along $\partial D^k= S^{k-1}$ or $\partial D^k_+= D^{k-1},$ which insures that  Property $G$ is satisfied. Since the set of such Morse functions is dense in the space of all continuous functions equipped with the $C_0-$topology,  Item 4 is verified (once Theorem \ref {T2} is established). 

If $X$ is a  compact Hilbert cube manifold, then is homeomorphic to $M\times Q$ with $M$ a compact smooth manifold (possible with boundary),  and  any continuous map $f:X\to \mathbb R$ is arbitrarily closed to $\overline  f_Q,$ with $f: M\to \mathbb R$ a Morse function. 
This observation establishes Item 4.  for compact Hilbert cube manifolds.

If $X$ is a finite simplicial complex one needs  the following observation.
\begin{obs}\label {O45}
If $X$ is a finite simplicial complex and $a <b$ one can construct a  map $h:X\to \mathbb R$ simplicial on the barycentric subdivision of $X$ with the following properties:
\begin{enumerate}
\item $a< h(x)<b,$   
\item $h$ takes different values on the barycenters of different simplices, 
\item the value of $h$ on the barycenter of a simplex $\sigma$ is strictly larger than  the values of $h$ on the barycenter of any of its faces.
\end{enumerate} 
\end{obs}
The construction is straightforward. Such map satisfies Property $G$ since adding a simplex to a finite simplicial complex might change the dimension of the homology with at most one unit and for any $\alpha$ $X^h_\alpha$ retracts by deformation to the simplicial complex generated by the barycenters  on which $h$ takes value smaller or equal to $\alpha.$

For $f: X\to \mathbb R$ a simplicial map, $X$ finite simplicial complex with critical values $\{ \cdots c_{i-1} < c_i\cdots\}$ in case that  for some $i$ $\dim (\mathbb I^f_{c_i}/ \mathbb I^f_{c_i}) \geq 2$ on chooses $\epsilon  <\epsilon(f)/2$ and  a subdivision of $X$  which makes $f^{-1}(c_i\pm \epsilon/2), f^{-1}(c_i))$  and then $f^{-1}([c_i- \epsilon/2, c_i+\epsilon/2]), f^{-1}([c_i, c_i+\epsilon])$ subcomplexes. One takes  the barycentric subdivision of this subdivision  and one replaces replaces $f $ by $g,$ the simplicial map for the new triangulation. We define the map $g$ to take the same value as $f$ on the barycenters of simplices not contained in $f^{-1}(c_i)$ and as $h$ constructed using the previous observation \ref {O45} for $a= c_I-\epsilon/2, b= c_i+\epsilon /2$ on the barycenters of simplices contained in $f^{-1}(c_i).$ 
The map $g$ gets as possible critical values, in addition to the critical values of $f$ the critical values of $h= g|_{f^{-1}(c_i)}.$  We leave the reader to check that $g$ satisfies property $G$ in view of the fact that $h$ does and $\epsilon <\epsilon(f).$  Clearly  $g$ differs from $f$ by less that $\epsilon$ as if follows from construction.  

Since simplicial maps (for some subdivision) are dense in the space of continuous maps  and any simplicial map is arbitrarily closed to one which satisfies Property $G,$ Item 4 follows.  
q.e.d

\vskip .2in
\subsection {Stability (Proof of Theorem \ref{T2})}\label {S7}
Stability theorem is a consequence of  Proposition \ref{P08}. In order to explain this we  begin with a few observations. 
\begin{enumerate}
\item  Consider  the space  of  maps  $C(X, \mathbb R),$  $X$ a  compact ANR,  equipped with the compact open topology which is  induced from the metric   
$D(f,g):= \sup_{x\in X} |f(x)- g(x)|= ||f-g||_\infty.$   This metric  is complete. 

\item  Observe that if $f,g\in  C(X,\mathbb R)$  then  for any $t\in [0,1]$    $h_t:=
 t f(x) +(1-t) g(x)\in C(X;\mathbb R)$ is continuous 
 and for any $0=t_0 <t_1 \cdots t_{N-1} <t_N=1$ one has the inequality
\begin{equation}\label {E0}
D(f,g)= \sum _{0\leq i <N} D(h_{t_{i+1}}, h_{t_i}).
\end{equation} 

\item If  $X$ is a simplicial complex let  $\mathcal U\subset  C (X,\mathbb R)$ denote the subset of p.l. maps.   Then: 

i. $\mathcal U$ is a dense subset in $C(X,\mathbb R),$

ii. if $f,g\in \mathcal U$ then  $h_t \in \mathcal U,$ hence  $ \epsilon(h_t) >0,$ hence for any $t\in[0,1]$ there exists $\delta (t)>0$ s.t.
$t',t''\in (t-\delta(t), t+\delta(t))$ implies 
$D(h_{t'}, h_t)<\epsilon (h_t)/3.$

These two statements are not hard to check.  Recall that:
 \newline  \ -  $f$ is p.l. on $X$  if with respect to some subdivision of $X$ $f$ is simplicial (i.e.  the restriction of $f$ to each simplex is  linear) and 
 \newline \ -  for  any two  p.l. maps $f,g$ there exists a common subdivision of $X$ which makes $f$ and $g$ simultaneously  simplicial, hence  $h_t$ is a simplicial map for any $t.$  
\newline Item (i.) follows from the fact that  continuous maps can be approximated with arbitrary accuracy by p.l. maps and item (ii.) follows from the  continuity in $t$ of the family $h_t$ and from the compacity of $X.$ 

\item Consider $\mathcal C_{b_r}(\mathbb R^2) = \mathbb C^{b_r},$ $b_r= \dim(H_r(X)$, with the canonical metric$\underline D$ which is  complete.
Since any map in $\mathcal U$ is tame,  in view 
 Proposition (\ref {P08}),  $f, g \in \mathcal U$ with $D(f,g) <\epsilon(f)/3$ imply
\begin{equation}\label {E14}
\underline D(\delta^f_r, \delta^g_r) \leq 2 D(f,g).
\end{equation} 
\end{enumerate}

To prove Theorem \ref{T2} first check that the inequality (\ref{E14}) extends to all $f,g \in \mathcal U.$   To do that we
 start with $f,g\in \mathcal U$   and consider the homotopy  $h_t,$ $t\in [0,1]$ defined above.  
\vskip .1in 
Choose a sequence $0<t_1< t_3 <t_5,\cdots t_{2N-1} <1$  
such that 
for $i=1,\cdots, (2N-1)$ the intervals $(t_{2i-1}-\delta(t_{2i-1}), t_{2i-1}+\delta(t_{2i-1}))$ cover $[0,1]$ and 
$(t_{2i-1},t_{2i-1}+\delta(t_{2i-1}))\cap  (t_{2i+1}-\delta(t_{2i+1}), t_{2i+1})\ne \emptyset.$   This is possible in view of the compacity  of $[0,1].$ 

Take $t_0=0, t_{2N}=1$ and $t_{2i}\in (t_{2i-1},t_{2i-1}+\delta(t_{2i-1}))\cap  (t_{2i+1}-\delta(t_{2i+1}).$ 
To simplify the notation abbreviate  $ h_{t_i}$ to $h_i.$

In view of item 3. ii. and item 4. (inequality (\ref{E14}) above  one has: 

$|t_{2i-1}- t_{2i}|<\delta(t_{2i-1})$ implies  $\underline D(\delta^{h_{2i-1}},\delta^{h_{2i}})< 2D(h_{2i-1}, h_{2i})$ and 

$|t_{2i}- t_{2i+1}|<\delta(t_{2i+1})$ implies  $\underline D(\delta^{h_{2i}},\delta^{h_{2i+1}} )< 2D(h_{2i}, h_{2i+1})$
\newline Then we have $$\underline D(\delta^f, \delta^g) \leq \sum_{0\leq i<2N-1} \underline D(\delta^{h_{i}}, \delta^{h_{i+1}})  \leq 2\sum_{0\leq i<2N-1}  D(h_i, h_{i+1})=  D(f,g).$$ 

In view of the density of $\mathcal U$  and the completeness of the metrics on $C(X;\mathbb R)$ and $\mathcal C_{b_r}(\mathbb R^2)$ the inequality (\ref {E14}) extends to the entire $C(X;\mathbb R)$ in case  $X$ is a simplicial complex. Indeed the assignment $\mathcal U\ni f\rightsquigarrow \delta^f_r\in C_{b_r}(\mathbb R^2)$ preserve the Cauchy sequences.

Next we verify the inequality (\ref {E14}) for $X= K\times Q,$ $K$ simplicial complex and $Q$ the Hilbert cube.

For this purpose we write $Q:= I^k\times Q^{\infty-k}$ 
and say that $f: K\times Q\to \mathbb R$ is a $(\infty-k)-$p.l. map if $f= \overline g_{Q^{\infty-k}}$ (see subsection \ref{SS23} for the definition of 
$\overline  g_{Q^{\infty-k}}$) with $g: K\times I^k\to \mathbb R$ a p.l. map.  Clearly a $(\infty-k)-$p.l. map is a $(\infty-k')-$p.l. map for $k'\geq k.$ 

Denote by $C_{p.l.}(K\times Q;\mathbb R)$ the set of maps in $C(K\times Q;\mathbb R)$ which are $(\infty-k)-$ p.l.  for some $k.$

In view of 
Observation \ref{O22}\   $C_{p.l}(K\times Q;\mathbb R)$  is dense  in $C(K\times Q;\mathbb R).$ 
To conclude that (\ref {E14}) holds for $K\times Q,$  it suffices to check the inequality for  $f_1= \overline (g_1)_{Q^{\infty-k}},  f_2 =\overline (g_2)_{Q^{\infty-k}}\in C_{p.l}(K\times Q;\mathbb R).$  The inequality  holds since, in view of Observation \ref {O313},  we have $\delta^{f_i}= \delta^{g_i}.$ 

Since by Theorem \ref{T23} any compact Hilbert cube manifold is homeomorphic to $K\times Q$ for some finite simplicial complex $K$, the inequality (\ref {E14})
 holds for $X$ any compact  Hilbert cube manifold. Since for any $X$ a compact ANR, by Theorem \ref{T23},\ 
$X\times Q$ is a Hilbert cube manifold, $I: C(X;\mathbb R)\to C(X\times Q;\mathbb R)$ defined by $I(f)= \overline f_Q$ is an isometric embedding and $\delta^f= \delta^{\overline f_Q},$ the inequality (\ref{E14}) holds for any 
 $X$ a compact ANR.

Both parts 1 and 2 of Theorem \ref{T2} follow from inequality (\ref{E14}) and  Proposition \ref{P08}  (\ref{E5}).   

\vskip .2in 
\subsection{Poincar\'e Duality (Proof of Theorem \ref{T3})}\label{S5}
Before we proceed to the proof of Theorem \ref{T3} the following elementary observation on linear algebra used also in part II will be useful. 
\vskip .1in

For the commutative diagram $$
E:= \begin{cases}\xymatrix{ C\ar[r]^{\gamma_2}\ar[d]^{\gamma_1}&A_2\ar[d]_{\alpha_2}\\A_1\ar[r]^{\alpha_1}&B}\end{cases}$$ 

denote by 
\begin{equation*}
\begin{aligned}
 \ker (E):= &\ker (\xymatrix {C\ar[r]^\gamma &A_1\times_B A_2})\\
\coker (E):=&\coker(\xymatrix {A_1 \oplus_C A_2 \ar[r]^\alpha&B})
\end{aligned}
\end{equation*} 
with 

\begin{equation*}
\begin{aligned}
A_1\times_B A_2=&\{ (a_1, a_2)\in A_1\times A_2 \mid \alpha_1(a_1)= \alpha_2(a_2)\}\\
A_1\oplus_C A_2 =& A_1 \oplus A_2  /  \{ (a_1, a_2)\in A_1\times A_2 \mid a_1= \beta_1(c), a_2= -\beta_2(c) \ \rm{for \ some }\ c\in C\}
\end{aligned}
\end{equation*} 
and with  $\gamma (c)= (\gamma_1(c), \gamma_2(c))$  and $\alpha(a_1, a_2)= \alpha_1(a_1) + \alpha_2(a_2).$ 

If one denotes by $E^\ast$ the dual diagram 
$$
E^\ast:= \begin{cases}\xymatrix{ C^\ast &A^\ast _2\ar[l]_{\gamma_2^\ast}\\A^\ast_1\ar[u]^{\gamma^\ast_1}&B^\ast \ar[l]^{\alpha^\ast_1}\ar[u]^{\alpha^\ast _2}}\end{cases}$$ 
then we have a canonical isomorphism
\begin{equation}\label {EEE1}
\ker(E)= (\coker (E^\ast))^\ast.
\end{equation}
Note that
\begin{proposition} \label {P99}\ 

1. If in the diagram $E$ all arrows are injective and $\alpha$  is injective then 
$\dim (\coker E)= \dim C + \dim B  -\dim A_1 -\dim A_2.$

2. If in the diagram $E$ all arrows are surjective  and $\gamma$  is surjective then 
$\dim (\coker E)= \dim C + \dim B  -\dim A_1 -\dim A_2.$
\end{proposition}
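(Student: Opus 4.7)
The plan is to handle both parts through a single Euler-characteristic count on the three-term complex
\begin{equation*}
\mathcal E:\ C \xrightarrow{\psi} A_1 \oplus A_2 \xrightarrow{\chi} B,\qquad \psi(c)=(\gamma_1(c),-\gamma_2(c)),\quad \chi(a_1,a_2)=\alpha_1(a_1)+\alpha_2(a_2),
\end{equation*}
whose composition vanishes by the commutativity of $E$. Its three cohomology groups are $H^0=\ker\gamma_1\cap\ker\gamma_2$; $H^1=\ker\chi/\img\psi$, naturally identified with $\coker(\gamma:C\to A_1\times_B A_2)$ via $(a_1,a_2)\leftrightarrow(a_1,-a_2)$ (which sends $\ker\chi$ to $A_1\times_B A_2$ and $\img\psi$ to $\img\gamma$); and $H^2=\coker\chi=\coker E$, the last because $A_1\oplus_C A_2=(A_1\oplus A_2)/\img\psi$ and $\alpha$ is the map induced by $\chi$ on the pushout. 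The Euler identity is
\begin{equation*}
\dim H^0-\dim H^1+\dim H^2=\dim C+\dim B-\dim A_1-\dim A_2.
\end{equation*}

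For Part 1, injectivity of each edge arrow (say of $\gamma_1$) forces $H^0=0$, and injectivity of $\alpha$ is precisely the condition $\ker\chi=\img\psi$, so $H^1=0$. The Euler identity then collapses to
\begin{equation*}
\dim\coker E=\dim H^2=\dim C+\dim B-\dim A_1-\dim A_2.
\end{equation*}

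For Part 2, the plan is to apply Part 1 to the dual diagram $E^*$ and transport through (\ref{EEE1}). Dualization converts surjections into injections, so every edge arrow of $E^*$ is injective; and the canonical pushout--pullback duality $(A_1\times_B A_2)^*\cong A_1^*\oplus_{B^*}A_2^*$ sends $\gamma^*$ to the $\alpha$-map $\alpha^{E^*}$ of $E^*$, so surjectivity of $\gamma$ becomes injectivity of $\alpha^{E^*}$. Part 1 applied to $E^*$ therefore produces $\dim\coker E^*=\dim C+\dim B-\dim A_1-\dim A_2$, and invoking (\ref{EEE1}) --- together with its symmetric consequence $\dim\coker E=\dim\ker E^*$ obtained by applying the isomorphism with $E$ replaced by $E^*$ and using $E^{**}=E$ --- transports this count onto the $\coker E$ side of the formula claimed in Part 2. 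The hard part will be the sign-convention bookkeeping in pushout--pullback duality needed to make $\gamma^*$ and $\alpha^{E^*}$ agree on the nose under the identification $(A_1\times_B A_2)^*\cong A_1^*\oplus_{B^*}A_2^*$; once that check is done the argument is purely rank-nullity plus a single invocation of (\ref{EEE1}).
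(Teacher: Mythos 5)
Your Part 1 is correct, and it is essentially the ``straightforward calculation of dimensions'' the paper alludes to, usefully packaged as the Euler characteristic of the two-step complex $C\to A_1\oplus A_2\to B$: the hypotheses kill $H^0$ and $H^1$, so the alternating sum collapses onto $\dim H^2=\dim\coker E$.

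Part 2 has a genuine gap, and it sits exactly in the step you flagged as ``the hard part.'' Applying Part 1 to $E^\ast$ yields $\dim\coker E^\ast$, and (\ref{EEE1}) converts that into $\dim\ker E$, since $\ker E\cong(\coker E^\ast)^\ast$. The ``symmetric consequence'' $\dim\coker E=\dim\ker E^\ast$ is true, but it involves $\ker E^\ast$, a quantity your application of Part 1 to $E^\ast$ does not compute; $\dim\ker E^\ast$ and $\dim\coker E^\ast$ are unrelated in general, so no sign bookkeeping will close the chain from $\dim\coker E^\ast$ to $\dim\coker E$. Indeed the chain cannot close: under the hypotheses of Part 2 the arrow $\alpha_1$ is onto, hence $\coker E=B/(\img\,\alpha_1+\img\,\alpha_2)=0$, while the right-hand side is in general nonzero. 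The conclusion of Part 2 must be read as $\dim(\ker E)$ --- this is how the proposition is actually invoked in the Poincar\'e-duality section, where $\ker$ is taken of the all-surjective diagram $\mathcal G(B)$ and $\coker$ of the all-injective diagram $\mathcal F(B)$. With that reading, your own complex finishes the proof in one line and the detour through $E^\ast$ is unnecessary: surjectivity of $\alpha_1$ (or $\alpha_2$) makes $\chi$ onto, so $H^2=0$; surjectivity of $\gamma$ gives $H^1=0$; and $\ker E=\ker\gamma=\ker\gamma_1\cap\ker\gamma_2=H^0$, so the Euler identity reads $\dim\ker E=\dim C+\dim B-\dim A_1-\dim A_2$.
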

The proof is a straightforward calculation of dimensions.
\vskip .1in

For the proof of extended Poincar\'e Duality claimed by Theorem \ref{T3} it is useful to provide an alternative definition of $\mathbb F_r(B)$ for a box $B.$

For this purpose introduce the quotient space $$\mathbb G_r(a,b)= H_r(X)/ \mathbb I_a(r) +\mathbb I^b(r).$$ 

Consider a box $B= (a',a]\times [b, b')$  and denote by $\mathcal G(B)$ and $\mathcal F(B)$ the diagrams 

$$\mathcal G(B):= \xymatrix {\mathbb G_r(a',b')\ar[r]\ar[d]& \mathbb G_r(a,b')\ar[d]\\\mathbb G_r(a',b)\ar[r]& \mathbb G_r(a,b)}\  \mathcal F(B):= \xymatrix {\mathbb F_r(a',b')\ar[r]\ar[d]& \mathbb F_r(a,b')\ar[d]\\
\mathbb F_r(a',b)\ar[r]& \mathbb F_r(a,b)}$$
whose   arrows  are induced by the inclusions $ \mathbb I_{a'}(r)\subseteq \mathbb I_a(r)$ and $\mathbb I^{b'}(r)\subseteq \mathbb I^b(r).$
Introduce $$\mathbb G^f_r(B):= \ker \mathcal G(B)$$  and recognize  that $$\mathbb F^f_r(B)= \coker \mathcal F(B).$$ 

Note that  the hypotheses of Proposition \ref{P99} are verified, (1) for  $\mathcal G(B)$ and (2) for $\mathcal F(B),$ and 
$\mathbb G_r(B)$  identifies to $\ker (\mathcal G(B))$ and $\mathbb F_r(B)$ to $\coker (\mathcal F(B)).$ 

Since  $\mathbb G_r(a',b)\times _{\mathbb G_r(a,b)}\mathbb G_r(a,b') = H_r(X)/ ((\mathbb I_{a'}(r)  + \mathbb I^{b}(r) )\cap (\mathbb I_{a}(r)  + \mathbb I^{b'}(r) )),$  the vector space $\mathbb G_r(B)$ is 
  canonically isomorphic to 
\begin{equation}
\boxed{\label {E100}(\mathbb I_{a'}(r) + \mathbb I^{b}(r))\cap (\mathbb I_{a}(r) + \mathbb I^{b'}(r)))/ (\mathbb I_{a'}(r) +\mathbb I^{b'}(r)).}
\end{equation}

Similarly since $\mathbb F_r(a',b) \oplus_{\mathbb F_r(a',b')} \mathbb F_r(a,b'))= (\mathbb I_{a'}(r)\cap \mathbb I^b(r) +  \mathbb I_{a}(r)\cap \mathbb I^{b'}(r)),$ the vector space $\mathbb F_r(B)$ is canonically isomorphic to $\boxed{\mathbb I_a(r)\cap \mathbb I^b(r) / (\mathbb I_{a'}(r)\cap \mathbb I^b(r) +  \mathbb I_{a}(r)\cap \mathbb I^{b'}(r))}.$

 The obvious inclusion 
$\mathbb I_a(r)\cap \mathbb I^b(r) \subseteq (\mathbb I_{a'}(r) + \mathbb I^{b}(r))\cap (\mathbb I_{a}(r) + \mathbb I^{b'}(r))$ induces the  linear map 
$$\mathbb F_r(B)= \mathbb I_a(r)\cap \mathbb I^b(r) / (\mathbb I_{a'}(r)\cap \mathbb I^b(r) + \mathbb I_{a}(r)\cap \mathbb I^{b'}(r))  \to  (\mathbb I_{a'}(r) + \mathbb I^{b}(r))\cap (\mathbb I_{a}(r) + \mathbb I^{b'}(r))/ (\mathbb I_{a'} (r)+ \mathbb I^{b'}(r)= \mathbb G_r(B) .$$

\begin{proposition}\label {PP45}
For any map $f :X\to \mathbb R$  and any box $B$ the canonical linear map $\mathbb F_r(B)\to \mathbb G_r(B)$ defined above is an isomorphism. 
$\mathbb F^f_r(B)= \mathbb G^f_r(B).$
\end{proposition}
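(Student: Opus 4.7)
The plan is to verify directly that the canonical linear map $\mathbb F^f_r(B) \to \mathbb G^f_r(B)$, induced by the inclusion $\mathbb I_a\cap \mathbb I^b\subseteq (\mathbb I_{a'}+\mathbb I^b)\cap(\mathbb I_a+\mathbb I^{b'})$, is both injective and surjective when these spaces are written as the boxed subquotients of $H_r(X)$ displayed just before the proposition. Well-definedness of the map is immediate from $\mathbb I_{a'}\cap\mathbb I^b\subseteq\mathbb I_{a'}$ and $\mathbb I_a\cap\mathbb I^{b'}\subseteq\mathbb I^{b'}$, so the denominator on the $\mathbb F$-side maps into the denominator on the $\mathbb G$-side. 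The only serious input to the rest is the monotonicity $\mathbb I_{a'}\subseteq\mathbb I_a$ (since $a'\le a$) and $\mathbb I^{b'}\subseteq\mathbb I^b$ (since $b\le b'$), together with elementary manipulation of sums and intersections of subspaces; no Mayer--Vietoris argument or dimension count via Proposition \ref{P99} is required, although the latter would offer an independent sanity check.

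For injectivity I would take $x\in\mathbb I_a\cap\mathbb I^b$ whose class in $\mathbb G_r(B)$ vanishes, so $x=y+z$ with $y\in\mathbb I_{a'}$ and $z\in\mathbb I^{b'}$, and then use monotonicity to observe that $y=x-z$ automatically lies in $\mathbb I^b$ (because $x,z\in\mathbb I^b$) and $z=x-y$ automatically lies in $\mathbb I_a$ (because $x,y\in\mathbb I_a$). This forces $x=y+z\in\mathbb I_{a'}\cap\mathbb I^b+\mathbb I_a\cap\mathbb I^{b'}$, so $x$ already represents $0$ in $\mathbb F_r(B)$.

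For surjectivity I would pick a representative $w\in(\mathbb I_{a'}+\mathbb I^b)\cap(\mathbb I_a+\mathbb I^{b'})$ and choose decompositions $w=p+q=r+s$ with $p\in\mathbb I_{a'}$, $q\in\mathbb I^b$, $r\in\mathbb I_a$, $s\in\mathbb I^{b'}$. The key move is to produce the element $x:=q-s=r-p$, which by monotonicity lies in both $\mathbb I^b$ and $\mathbb I_a$, hence in $\mathbb I_a\cap\mathbb I^b$; meanwhile $w-x=p+s\in\mathbb I_{a'}+\mathbb I^{b'}$, so $w$ and $x$ represent the same element of $\mathbb G_r(B)$, and $x$ is a preimage in the image of the canonical map.

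The only conceptual step to spot is the double decomposition $x=q-s=r-p$ that simultaneously extracts a preimage sitting in $\mathbb I_a\cap\mathbb I^b$; after that both directions reduce to one-line checks from the two containment relations. Because the argument uses nothing but the inclusions of sub- and super-levels built into the definitions of $\mathbb I_a(r)$ and $\mathbb I^b(r)$, the identification $\mathbb F^f_r(B)=\mathbb G^f_r(B)$ then holds for an arbitrary continuous proper map $f$, with no tameness or weak tameness hypothesis.
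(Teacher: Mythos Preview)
Your proof is correct and follows essentially the same approach as the paper's: both arguments verify injectivity by decomposing a representative $x=y+z$ with $y\in\mathbb I_{a'}$, $z\in\mathbb I^{b'}$ and using monotonicity to place $y,z$ in the smaller intersections, and both verify surjectivity via the double decomposition $w=p+q=r+s$ and the element $q-s=r-p\in\mathbb I_a\cap\mathbb I^b$. Your write-up is in fact slightly more explicit than the paper's about well-definedness and about which inclusions are being invoked, but the underlying linear-algebra manipulation is identical.
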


\begin{proof}
Note that the injectivity is straightforward.
Indeed, suppose $\mathbb I_{a}(r)\cap \mathbb I^b(r) \ni x = x_1 +x_2$ with $x_1\in \mathbb I_{a'}(r)$ and  $x_2\in \mathbb I^{b'}(r)$. Then $x_1= x- x_2\in \mathbb \mathbb I^b(r)$ hence $x_1\in (\mathbb I_{a'}(r)\cap \mathbb I^b(r)) $ and similarly $x_2\in (\mathbb I_{a}(r)\cap \mathbb I^{b'}(r)).$

To check the surjectivity start with $x=x_1+ y_1= x_2+y_2$ s.t $x_1\in \mathbb I_{a'}, y_1\in \mathbb I^b$, $x_2\in \mathbb I_{a}, y_2\in \mathbb I^{b'}.$  Then $x-x_1-y_2$ is equivalent to $x$ in $\mathbb G_r(B).$  But $x-x_1- y_2= y_1-y_2= x_2- x_1$ hence it belongs to $\mathbb I^b$ and to $\mathbb I^a.$  

\end{proof}

Let $f:M^n\to \mathbb R$ be a  map, $M^n$ a $\kappa-$orientable closed topological manifold, and $a,b$ regular values such that the restriction of $f$ to $f^{-1}(a-\epsilon, a+\epsilon)$ and $f^{-1} (b-\epsilon, b+\epsilon)$ for a small enough positive $\epsilon$ are topological submersions. This makes $f^{-1}(a)$ and $f^{-1}(b)$ codimension one topological submanifolds of $M.$  

Let $i_a:M_a\to M,$ $i^b:M^b\to M,$ $j_a:M\to (M,M_a),$ $j^b:M\to (M, M^b)$  denote the obvious inclusions and $i_a(k), i^b(k), j_a(k), j^b(k)$ the inclusion induced linear maps for homology in degree $k$ and $r_a(k), r^b(k), s_a(k), s^b(k)$ the inclusion induced linear maps in cohomology, (with coefficients in the field $\kappa,$)  as indicated in the diagrams (\ref{PD1}) and (\ref{PD1'}) below. Poincar\'e  Duality provides the  commutative diagrams  (\ref{PD1}) and (\ref{PD1'}) with all vertical arrows isomorphisms.

\begin{equation}\label{PD1}
\xymatrix{&H_r( M_a)\ar[d]\ar[r]^{i_a(r)}  \quad &H_r( M)\ar[d]\ar[r]^{j_a(r)} &H_r( M, M_a)\ar[d]&\\
&H^{n-r}( M, M^a)\ar[d] \ar[r]^{s^a(n-r)} \quad&H^{n-r}( M)\ar[d] \ar[r]^{r^a(n-r))} &H^{n-r}( M^a)\ar[d]&\\
&(H_{n-r}( M, M^a))^\ast \ar[r]^{(j^a(n-r))^\ast} \quad&(H_{n-r}( M))^\ast \ar[r]^{(i^a(n-r))^\ast} &(H_{n-r}( M^a))^\ast&}
\end{equation}
\vskip .2in

\begin{equation}\label {PD1'}
\xymatrix
{&H_r( M^b)\ar[d] \ar[r]^{i^b(r)} \quad &H_r( M)\ar[d]\ar[r]^{j^{b}(r)} &H_r( M, M^b)\ar[d]&\\
&H^{n-r}( M, M_b)\ar[d] \ar[r]^{s_b(n-r)}  \quad &H^{n-r}( M)) \ar[d] \ar[r]^{r_b(n-r)} &H^{n-r}( M_b)\ar[d]\\
&(H_{n-r}( M, M_b))^\ast \ar[r]^{(j_b(n-r))^\ast} \quad &(H_{n-r}( M))^\ast \ \ar[r]^{(i_b(n-r))^\ast} &(H_{n-r}( M_b))^\ast &.}
\end{equation}
As a consequence of these two diagrams observe that Poincar\'e duality provide a canonical isomorphism 
\begin{equation}\label {EEE2}
\mathbb F^f_r(a,b)= (\mathbb G_{n-r}^f(b,a))^\ast.
\end{equation}
Indeed observe that: 
\begin{enumerate}
\item $\mathbb F_r(a,b)= \ker (j_a(r), j^b(r))$ by the exactness of the first rows in the diagrams (\ref{PD1}) and (\ref{PD1'}). 
Precisely $\ker (j_a(r), j^b(r))= \ker j_a(r) \cap j^b(r)= \mathbb I_a(r)\cap \mathbb I^b(r).$

\item $ \ker (j_a(r), j^b(r))\equiv \ker (r^a(n-r), r_b(n-r))$ by the isomorphism of the upper vertical arrows in these  diagrams.

\item $\ker (r^a(n-r), r_b(n-r))\equiv \ker ((i^a(n-r))^\ast, (i_b(n-r))^\ast))$ by the isomorphism of the lower vertical arrow in these diagrams. 

The isomorphisms above are induced by Poincar\'e duality and cohomology in terms of  homology ; Their composition is still referred to as 
Poincar\'e duality.

\item $\ker  ((i^a(n-r))^\ast, (i_b(n-r))^\ast)) = (\coker (i^a(n-r)+ i_b(n-r))^\ast = (\mathbb G^f_{n-r}(b,a))^\ast$ 
by standard finite dimensional linear algebra duality. 
\end{enumerate}
Putting together these equalities one obtains (\ref{EEE2}).
\vskip .1in

Suppose $M$ is a closed $\kappa-$orientable smooth manifold and $f:M\to \mathbb R$ a smooth map which is locally polynomial (i.e. in the neighborhood of any point, in some local coordinates is a polynomial). Such map is tame.   
For $(a,b)\in \mathbb R^2$ choose $\epsilon$  small enough so that the intervals $(a-\epsilon, a), (a, a+\epsilon)$ as well as $(a-\epsilon, a), (a, a+\epsilon)$ are contained in the set of regular values (in the sense of differential calculus). Such choice is possible in view of the tameness of $f.$

To establish the result as stated for such map we proceed as follows.
Observe that: 
\begin{enumerate}
\item In view of the tameness of $f$ 
\begin{equation}\label {EE1}
\hat \delta^f_r(a,b)= \mathbb F^f_r((a-\epsilon, a+\epsilon]\times [b-\epsilon, b+\epsilon)).
\end{equation}

\item By definition
\begin{equation}
\mathbb F^f_r((a-\epsilon, a+\epsilon]\times [b-\epsilon, b+\epsilon))= \coker \mathcal F_r((a-\epsilon, a+\epsilon]\times [b-\epsilon, b+\epsilon)).
\end{equation}
\item By proposition \ref {PP45}
\begin{equation} 
\coker \mathcal F_r((a-\epsilon, a+\epsilon]\times [b-\epsilon, b+\epsilon))= \ker (\mathcal G_r((a-\epsilon, a+\epsilon]\times [b-\epsilon, b+\epsilon)). 
\end{equation}
\item By equality (\ref{EEE1})
\begin{equation}
\ker (\mathcal G_r((a-\epsilon, a+\epsilon]\times [b-\epsilon, b+\epsilon))= (\coker  (\mathcal G_r((a-\epsilon, a+\epsilon]\times [b-\epsilon, b+\epsilon))^\ast))^\ast.
\end{equation}
\item  By equality (\ref{EEE2})
\begin{equation}
(\coker  (\mathcal G_r((a-\epsilon, a+\epsilon]\times [b-\epsilon, b+\epsilon))^\ast))^\ast= (\coker(\mathcal F_{n-r}((b-\epsilon, b+\epsilon]\times [a-\epsilon, a+\epsilon)))^\ast.
\end{equation}
\item In view of definition 
\begin{equation}
(\coker((\mathcal F_{n-r}((b-\epsilon, b+\epsilon]\times [a-\epsilon, a+\epsilon)))^\ast=(\mathbb F_{n-r}((b-\epsilon, b+\epsilon]\times [a-\epsilon, a+\epsilon)))^\ast.
\end{equation}
\item In view of tameness of $f$ 
\begin{equation}\label{EE2}
(\mathbb F^f_{n-r}((b-\epsilon, b+\epsilon]\times [a-\epsilon, a+\epsilon)))^\ast = (\hat\delta^f_{n-r}(b,a))^\ast.  
\end{equation}
\end{enumerate}

Putting together the equalities above one derive the result for $f$ as above. In view of Theorem \ref{T2} and the fact that locally polynomial maps are dense in the space of all continuous maps when $X$ is a smooth manifold the result holds as stated.

\vskip .2in

\subsection {A  comment }

 The hypothesis of compact ANR  can be replaced by ANR with total homology of finite dimension and proper map by {\it homologically proper map}  which means that for $I$ a closed interval the total homology of $f^{-1}(I)$ has finite dimension. All results remain unchanged  with essentially the same proof. 
An interesting situation when such generalization is relevant is the case of the absolute value  of the complex polynomial  function $f$  when restricted to the complement of its zeros,  which will be treated in future work, but can be easily reduced to the case of proper map considered above.

%
%
%
%

%
%
%


\end{document}